\newtheorem{thm}{Theorem}[section]
\newtheorem{lem}[thm]{Lemma}
\newtheorem{prop}[thm]{Proposition}
\newtheorem{exam}[thm]{Example}
\theoremstyle{definition}
\newtheorem{defi}{Definition}[section]
\newtheorem{rema}[thm]{Remark}
\numberwithin{equation}{section}
\newcommand{\Real}{\mathbb{R}}
\newcommand{\R}{\Real}
\newcommand{\Z}{{\mathbb Z}}
\newcommand{\N}{{\mathbb N}}
\newcommand{\Rmnum}[1]{\expandafter\@slowromancap\romannumeral #1@}
\begin{document}
	\baselineskip=17pt
	\setcounter{figure}{0}
	
	\title[Iterated relation systems]
	{Iterated relation systems on Riemannian manifolds}
	\date{\today}
	\author[J. Liu]{Jie Liu}
	\address{Key Laboratory of High Performance Computing and Stochastic Information
		Processing (HPCSIP) (Ministry of Education of China, College of Mathematics and Statistics, Hunan Normal University, Changsha, Hunan 410081, China.}
	\email{1019198461@qq.com}

	\author[S.-M. Ngai]{Sze-Man Ngai}
	\address{Beijing Institute of Mathematical Sciences and Applications, Huairou District, 101400, Beijing,  China,  and Key Laboratory of High Performance Computing and Stochastic
		Information Processing (HPCSIP) (Ministry of Education of China), College of
		Mathematics and Statistics, Hunan Normal University, Changsha, Hunan
		410081,  China.}
	\email{ngai@bimsa.cn}
	
	\author[L. Ouyang]{Lei Ouyang}
	\address{Key Laboratory of High Performance Computing and Stochastic Information
		Processing (HPCSIP) (Ministry of Education of China, College of Mathematics and Statistics, Hunan Normal University, Changsha, Hunan 410081, China.}
	\email{ouyanglei@hunnu.edu.cn}

	\subjclass[2010]{Primary: 28A80; Secondary: 28A78.}
	\keywords{Riemannian manifold; iterated relation system; graph iterated function system; graph open set condition; graph finite type condition; Hausdorff dimension.}

	\thanks{The authors are supported in part by the National Natural Science Foundation of China, grant 12271156, and Construct Program of the Key Discipline in Hunan Province.}

	\begin{abstract}
		For fractals on Riemannian manifolds, the theory of iterated function systems often does not apply well directly, as fractal sets are often defined by relations that are multivalued or non-contractive. To overcome this difficulty,  we introduce the notion of iterated relation systems. We study the attractor of an iterated relation system and formulate a condition under which such an attractor can be identified with that of an associated graph-directed iterated function system. Using this method, we obtain dimension formulas for the attractor of an iterated relation system under the graph open set condition or the graph finite type condition. This method improves the one in [Ngai-Xu, J. Geom. Anal. {\bf 33} (2023), 262], which relies on knowing the specific structure of the attractor.
		\tableofcontents
	\end{abstract}
	
	\maketitle
	\section{Introduction}\label{S:IN}
	\setcounter{equation}{0}
	Fractals in Riemannian manifolds, especially in Lie groups, Heisenberg groups, and projective spaces, have been studied extensively by many authors, (see  Strichartz \cite{Strichartz_1992}, Balogh and Tyson \cite{Balogh-Tyson_2005}, Barnsley and Vince \cite{Barnsley-Vince_2012}, Hossain {\em et al.} \cite{Hossain-Akhtar-Navascues_2024}, etc.). A basic theory of iterated function systems (IFS) on Riemannian manifolds, including  $L^q$-spectrum, Hausdorff dimension, the weak separation condition (WSC), and the finite type condition (FTC), etc., has been established by  the second author and Xu (\cite{Ngai-Xu_2021,Ngai-Xu_2023}).
	In \cite{Ngai-Xu_2023}, a method for computing the Hausdorff dimension of a certain fractal set  in a flat torus is obtained; the fractal set can be identified with the attractor of some IFS on $\mathbb{R}^2/\mathbb{Z}^2$.
	However,  this method is too cumbersome, and is not easy to generalize. The purpose of this paper is to establish a new method that can be used to calculate the Hausdorff dimensions of general fractal sets in Riemannian manifolds. Unlike fractal sets in $\R^n$, many interesting fractals in Riemannian manifolds cannot be described by an IFS. For example,  the relations generating a Sierpinski gasket on the cylindrical surface are multivalued (see Example \ref{exam:osctri}). To deal with this problem, we introduce the notion of iterated relation systems (IRS) (see Definition \ref{defi:2.1}). We will show that many interesting fractals can be described naturally by IRSs.

	Let $\{R_t\}_{t=1}^N$ be an IRS on a nonempty compact subset of a topological space, and let $K$ be the attractor of $\{R_t\}_{t=1}^N$ (see Definition \ref{defi:2.2}). Then
		\begin{align}\label{eq:K=RK}
			K=\bigcup_{t=1}^NR_t(K)
			\end{align}
		(see Proposition \ref{prop:2.2}).
		 However, in general, a nonempty compact set $K$ satisfying
	\eqref{eq:K=RK} need not be unique, and therefore need not be the attractor of $\{R_t\}_{t=1}^N$;  Example \ref{exa:notunique} illustrates this. To study the uniqueness of $K$, we introduce the notion of a graph-directed iterated function system (GIFS) associated to an IRS (see Definition \ref{defi:3.1}), and prove the uniqueness of $K$ under the assumption that such a GIFS exists.

	Since relations need not be single-valued or contractive, it might not be possible to construct a GIFS associated to an IRS (see Examples \ref{thm(a)}--\ref{notcon}). 	Our first goal is to study, under suitable conditions, the existence of a GIFS associated to an IRS (see Theorem \ref{thm:exi}).
	Let $\# F$ be the cardinality of a set $F$. The definition of a finite family of contractions decomposed from $\{R_t\}_{t=1}^N$ and the definition of a good partition that appear in our first main theorem below are given in Definition \ref{defi:good}.
	\begin{thm}\label{thm:exi}
		Let $X$ be a complete metric space and let $E_0\subseteq X $ be a nonempty compact set. Let $\{R_t\}_{t=1}^N$ ($N\geq 1$) be an IRS on $E_0$. For any $k\geq0$, let $E_{k+1}:=\bigcup_{t=1}^NR_t(E_k)$. Assume that $\{R_t\}_{t=1}^N$ satisfies the following conditions.
		\begin{enumerate}
			\item[(a)]There exists an integer $k_0\geq 1$ such that for any $t\in\{1,\ldots,N\}$ and any $x\in E_{k_0-1}$,
			$$\#\{R_t(x)\}<\infty.$$
			\item[(b)] For any $t\in\{1,\ldots,N\}$, if $H_t:=\{x\in E_{k_0-1}|2\leq\#\{R_t(x)\}<\infty\}\neq \emptyset$,  we require that the following conditions are satisfied.
			\begin{enumerate}
				\item[(i)] $r_t:=R_t|_{H_t}$  can be decomposed as a finite family of contractions  $\{h _t^{l,i}:H_t^i\to h _t^{l,i}(H_t^i)\}_{l=1,i=1}^{n_t,m_t}$,  where $\bigcup_{i=1}^{m_t}H_t^i=H_t$, and $\tilde{r}_t:=R_t|_{E_{k_0-1}\backslash H_t}$  can be decomposed as a finite family of contractions  $\{h _t^{0,i}:J_t^i\to h _t^{0,i}(J_t^i)\}_{i=1}^{s_t}$,  where $\bigcup_{i=1}^{s_t}J_t^i=E_{k_0-1}\backslash H_t$.
				\item[(ii)]  There exists a good partition on $E_{k_0}$ with respect to some finite family of contractions decomposed from $\{R_t\}_{t=1}^N$ (not necessarily the one in $(i)$).
			\end{enumerate}
		\end{enumerate}
		Then there exists a GIFS associated  to $\{R_t\}_{t=1}^N$ on $E_{k_0}$.
	\end{thm}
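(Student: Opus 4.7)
The plan is to build the GIFS explicitly from the two pieces of data provided by hypothesis (b): the contractive decomposition of each relation in (i), and the good partition of $E_{k_0}$ in (ii). The good partition will supply the vertex set of the GIFS, the decomposed contractions (suitably restricted to partition pieces) will supply the edges, and the whole point of Definition \ref{defi:good} should be precisely that these two structures fit together so that each sub-contraction maps one partition piece into another, which is exactly what a GIFS requires.

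First I would invoke (b)(ii) to fix a good partition $\{V_1,\dots,V_q\}$ of $E_{k_0}$, together with the finite family of contractions with respect to which it is good; write this family as a single list $\{h_t^{l,i}\}$ indexed by relation, branch, and domain piece. Condition (a) forces finiteness of the multivalued branches on $E_{k_0-1}$, and (b)(i) together with (b)(ii) guarantees that each branch is genuinely a contraction on its domain piece. I would then form a directed multigraph $G$ with vertex set $\{1,\dots,q\}$, and, for every contraction $h$ in the list and every pair $(V_j,V_l)$ such that $h$ is defined on a nonempty subset of $V_j$ and maps that subset into $V_l$, introduce an edge $e\colon j\to l$ labeled by the contraction $\phi_e:=h|_{V_j\cap\mathrm{dom}(h)}$. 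Finiteness of the resulting edge set follows from (a) and (b)(i), and the goodness of the partition should ensure that each $\phi_e$ has its image in a single $V_l$, so that $(G,\{\phi_e\})$ is a bona fide GIFS in the sense of Definition \ref{defi:3.1}.

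Next I would verify that this GIFS is associated to $\{R_t\}_{t=1}^N$ on $E_{k_0}$. The key identity to check is that, for every $j$, $V_j$ coincides with the union $\bigcup_{e:l\to j}\phi_e(V_l)$ of the images of the vertex sets under the incoming edges, so that the list $(V_1,\dots,V_q)$ is the invariant list of the GIFS. One direction is built into the construction of the edges; the other follows from $E_{k_0}=\bigcup_{t=1}^N R_t(E_{k_0-1})$ combined with the fact that the decomposition in (b)(i) and the good partition jointly exhaust every branch of every $R_t$ on every partition piece. Uniqueness of the invariant list for a contractive GIFS then identifies the attractor with the $V_j$'s, closing the loop between the IRS iteration and the GIFS iteration.

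The main obstacle I expect is bookkeeping rather than analysis: one has to keep track of two potentially different decompositions, since (b)(ii) permits the good partition to be compatible with \emph{some} decomposition of the $R_t$ that need not coincide with the one in (b)(i). To handle this I would take a common refinement by intersecting domain pieces of the two decompositions, and only then declare the edges. A secondary subtlety is making sure that, after refinement, each refined domain piece still carries a genuine contraction whose image lies in a single $V_l$, which is where the precise form of Definition \ref{defi:good} will be used.
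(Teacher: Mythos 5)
Your overall architecture (vertices from the good partition, edges from the decomposed branches restricted to partition pieces, finiteness from (a) and (b)(i)) matches the paper's construction, but the verification step rests on a false identity. You propose to check that each partition piece satisfies $V_j=\bigcup_{e}\phi_e(V_l)$ and to conclude by "uniqueness of the invariant list" that the $V_j$ are the attractor components. This cannot work: the images $\phi_e(V_l)$ all lie in $\bigcup_{t}R_t(E_{k_0})=E_{k_0+1}$, which is in general a \emph{proper} subset of $E_{k_0}=\bigcup_j V_j$, so equality would force $E_{k_0+1}=E_{k_0}$ and the attractor to be all of $E_{k_0}$. The partition pieces are only an invariant \emph{family} ($\bigcup_{e\in E^{i,j}}f_e(W_j)\subseteq W_i$), not the invariant list. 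What Definition \ref{defi:3.1} actually requires is the level-$q$ identity \eqref{eq:gifs}, namely $\bigcup_{t\in\Sigma^q}R_t(E_{k_0})=\bigcup_{i,j}\bigcup_{\mathbf{e}\in E_q^{i,j}}f_{\mathbf{e}}(W_j)$ for every $q\geq 1$; the paper establishes the case $q=1$ directly from the decomposition and then runs an induction on $q$. Your proposal needs to replace the fixed-point check by this induction.

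A second, genuinely analytic omission: the domain pieces $H_t^i$, $J_t^i$ and hence the images $h_t^{l,i}(H_t^i)$ need not be closed, while the vertex sets of a GIFS associated to an IRS must be compact and the identity \eqref{eq:gifs} must hold exactly, not up to closure. The paper's Claims 3--5 handle this by extending each branch by uniform continuity to a map $\tilde h_t^{l,i}$ on $\overline{H_t^i}$ and proving $\tilde h_t^{l,i}\bigl(\overline{E_{k_0}\cap H_t^i}\bigr)=\overline{h_t^{l,i}(E_{k_0}\cap H_t^i)}$, which is what makes the $q=1$ identity an equality between compact sets. Your "common refinement" device is harmless but does not address either of these two points, and the refinement itself would need a separate argument that refined pieces remain invariant under the refined branches (intersecting two partitions can destroy the invariance required in Definition \ref{defi:good}(b)).
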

	We give examples (see Examples \ref{thm(a)}--\ref{notgood}) to investigate the conditions in Theorem \ref{thm:exi}. Under the assumption that $\{R_t\}_{t=1}^N$ satisfies conditions (a) and (b)(i) in Theorem \ref{thm:exi},  we give a sufficient condition for the hypothesis (b)(ii) to be satisfied (see Theorem \ref{thm:exi2}).

	In Section \ref{S:GOSC}, we study IRS attractors under the assumption that an associated GIFS satisfies (GOSC).   Our second goal is to describe a method for computing the Hausdorff dimension of an IRS attractor in a Riemannian manifold under the assumption that an associated GIFS $G$ exists and  $G$ satisfies (GOSC).
The construction of the incidence matrices  that appear in Theorems \ref{thm:main1}--\ref{thm:main2} will be given in Section \ref{S:GOSC} (see \eqref{eq:matrix}).
	\begin{thm}\label{thm:main1}
		Let $M$ be a complete n-dimensional smooth orientable Riemannian manifold that is locally Euclidean. Let $\{R_t\}_{t=1}^N$ be an IRS on a nonempty compact subset of $M$, and $K$ be the associated attractor. Assume that there exists a GIFS $G=(V,E)$ associated to $\{R_t\}_{t=1}^N$ and assume that $G$ consists of contractive similitudes. Suppose that $G$ satisfies (GOSC). Let $\lambda_\alpha$ be the spectral radius of the incidence matrix $A_\alpha$ associated to $G$.  Then
		$$\dim_H(K)=\alpha,$$
		where $\alpha$ is the unique number such that $\lambda_\alpha=1$.
	\end{thm}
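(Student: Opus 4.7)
The plan is to reduce the problem to the known dimension formula for graph-directed iterated function systems (GIFS) in Euclidean space, using the GIFS $G=(V,E)$ that is associated to $\{R_t\}_{t=1}^N$ together with the local Euclidean structure of $M$.

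First I would identify the attractor. Since a GIFS $G$ associated to $\{R_t\}_{t=1}^N$ exists, the uniqueness result for solutions of \eqref{eq:K=RK} gives that $K$ coincides (up to the natural identification induced by the decomposition into the vertex pieces $\{K_v\}_{v\in V}$) with the union $\bigcup_{v\in V}K_v$ of the components of the GIFS attractor. In particular, $\dim_H(K)=\max_{v\in V}\dim_H(K_v)$. This reduces the theorem to computing $\dim_H\bigl(\bigcup_v K_v\bigr)$ for the GIFS $G$.

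Next I would pass to the Euclidean setting. Because $M$ is $n$-dimensional, smooth, orientable and locally Euclidean, each $K_v$ is compact and can be covered by finitely many charts in which the Riemannian metric is isometric to the Euclidean metric; by iterating $G$ sufficiently many times we can arrange that every cylinder set (and its (GOSC) open neighbourhood) sits inside a single such chart, while the contractive similitudes of $M$ restrict, in those charts, to contractive similitudes of $\R^n$ with the same contraction ratios. The spectral radius $\lambda_\alpha$ of the incidence matrix $A_\alpha$ constructed from those ratios is therefore unchanged under the passage to the chart. At this point we have realised (finitely many iterations of) $G$ as a GIFS on $\R^n$ satisfying the usual graph open set condition.

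The final step is to invoke the Mauldin--Williams dimension theorem for GIFS on $\R^n$: if a GIFS of contractive similitudes satisfies (GOSC) and its incidence matrix is irreducible (which is built into the construction of $A_\alpha$ in \eqref{eq:matrix}), then the Hausdorff dimension of the attractor equals the unique $\alpha$ for which $\lambda_\alpha=1$. Applying this to each strongly connected component and taking the maximum, together with the fact that the unique $\alpha$ for $G$ is also realised by the dominant component, yields $\dim_H(K)=\alpha$.

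The main obstacle I expect is the second step: one has to verify carefully that after finitely many iterations every cylinder piece, together with the (GOSC) witness open sets, is simultaneously trapped inside a chart that is a \emph{metric} isometry to a Euclidean ball (not merely a smooth diffeomorphism), so that the maps remain similitudes with the same ratios. Once this chart-trapping is done, the transfer of the dimension formula and the invariance of $A_\alpha$ under iteration are routine, and the Mauldin--Williams formula applies verbatim.
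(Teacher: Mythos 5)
Your proposal is essentially correct, but it follows a genuinely different route from the paper after the first step. You and the paper agree on the reduction $\dim_H(K)=\dim_H(\widetilde K)$ via the associated GIFS (the paper gets this from Proposition \ref{prop:3.1}, which shows $K=\widetilde K$ directly by unwinding Definition \ref{defi:3.1}, rather than from the uniqueness statement of Proposition \ref{prop:3.2}, but the effect is the same). From there the paper does \emph{not} localize to Euclidean charts: it simply observes that (GOSC) implies (GFTC) and then quotes the already-established graph-finite-type dimension theorem on Riemannian manifolds (\cite[Theorem 1.6]{Ngai-Xu_2023}), so the entire geometric content is outsourced to that prior result. Your route instead re-realizes the GIFS inside isometric Euclidean charts and invokes the Mauldin--Williams formula; this is more self-contained and makes the role of the flatness hypothesis explicit, but it is also where the real work lies. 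Two points in your write-up need strengthening. First, the chart-trapping you flag as the ``main obstacle'' is not automatic at the level of the original vertex sets $W_j$ and the (GOSC) open sets $U_i$, which can be large compared with the injectivity radius; one has to re-vertex the GIFS at a sufficiently deep level $q$ (taking the level-$q$ cylinders as new vertex sets) and check that the unique $\alpha$ with $\lambda_\alpha=1$ is unchanged under this iteration --- standard, but it is a step, not a remark. Second, irreducibility of $A_\alpha$ is \emph{not} built into \eqref{eq:matrix}; the associated GIFS need not be strongly connected, so you must use the version of Mauldin--Williams for general directed graphs (maximizing over strongly connected components), which you gesture at but should not describe as automatic. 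With those two repairs your argument goes through and would in fact reprove the special case of the cited theorem that the paper relies on.
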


	The definitions of a simplified GIFS and a minimal simplified GIFS are given in Definitions \ref{simplified graph} and \ref{defi:min}, respectively.
	
	\begin{thm}\label{thm:main2}
		Let $M$, $\{R_t\}_{t=1}^N$, $K$, and $G=(V,E)$ be as in Theorem \ref{thm:main1}. Let $\widehat{G}=(\widehat{V},\widehat{E})$ be a  minimal simplified GIFS associated to $G$. Assume that $\widehat{G}$ satisfies (GOSC).  Let $\widehat{\lambda}_\alpha$ be the spectral radius of the incidence matrix $\widehat{A}_\alpha$ associated to $\widehat{G}$.  
	\begin{enumerate}
			\item [(a)] $\dim_H(K)=\alpha$, 	where $\alpha$ is the unique real number such that $\widehat{\lambda}_\alpha=1$.
			\item [(b)] In particular, let $\widehat{G}=(\widehat{V},\widehat{E})$ be a minimal simplified GIFS associated to $G$ and assume that $\widehat{G}$ consists of contractive similitudes $\{f_t\}_{t=1}^m$. Suppose that $\#\widehat{V}=1$. If $\{f_t\}_{t=1}^m$ satisfies (OSC), then $\dim_H(K)$ is the unique number $\alpha$ satisfying
			$$\sum_{t=1}^m\rho_t^\alpha=1,$$
			where $\rho_t$ is the contraction ratio of $f_t$.	
		\end{enumerate}
	\end{thm}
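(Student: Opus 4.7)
The plan is to reduce part (a) to an application of Theorem \ref{thm:main1} to the simplified GIFS $\widehat G$ in place of $G$, and then to obtain part (b) as the single-vertex specialization of part (a). The value of Theorem \ref{thm:main2} over Theorem \ref{thm:main1} is precisely that (GOSC) is assumed only for $\widehat G$, not for $G$ itself, so the whole point is to transfer the attractor and the contractive-similitude structure from $G$ to $\widehat G$ without loss, and then quote Theorem \ref{thm:main1}.

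For part (a), I would first verify that $\widehat G$ meets the hypotheses of Theorem \ref{thm:main1}. Since $G$ is a GIFS associated to $\{R_t\}_{t=1}^N$, the union of the vertex-attractor components of $G$ equals $K$. By Definitions \ref{simplified graph} and \ref{defi:min}, each edge map of the minimal simplified $\widehat G$ arises as a composition along a path of edge maps of $G$; as compositions of contractive similitudes are again contractive similitudes, $\widehat G$ consists of contractive similitudes on $M$. A standard invariance argument for GIFS attractors, combined with the minimality of $\widehat G$, then identifies the union of vertex-attractor components of $\widehat G$ with $K$. Applying Theorem \ref{thm:main1} to $\widehat G$, whose (GOSC) is assumed, yields $\dim_H(K)=\alpha$, where $\alpha$ is the unique real number with $\widehat\lambda_\alpha=1$.

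For part (b) I would specialize part (a) to $\#\widehat V=1$. In this case $\widehat G$ reduces to an ordinary IFS $\{f_t\}_{t=1}^m$ of contractive similitudes with ratios $\rho_t$, and every edge of $\widehat G$ is a self-loop at the unique vertex. The incidence matrix $\widehat A_\alpha$ is then the $1\times 1$ matrix $\bigl[\sum_{t=1}^m\rho_t^\alpha\bigr]$, whose spectral radius is simply $\sum_{t=1}^m\rho_t^\alpha$. Moreover, (GOSC) for a single-vertex GIFS coincides with (OSC) for the associated IFS, so the hypothesis of part (a) is supplied by the assumed (OSC). The equation $\widehat\lambda_\alpha=1$ then becomes the Moran equation $\sum_{t=1}^m\rho_t^\alpha=1$, whose unique positive solution is $\dim_H(K)$.

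The main obstacle I foresee is the first step of part (a): verifying carefully from the definitions of simplified and minimal simplified GIFS that the union of vertex-attractor components of $\widehat G$ really is $K$ and that each edge map remains a contractive similitude on $M$. This is essentially bookkeeping, but it is the pivot on which the whole reduction rests; it must be executed so that no attractor component is gained or lost when edges of $G$ are collapsed, and so that the composition of similitudes along a collapsed path is again well-defined on the relevant Euclidean chart of $M$.
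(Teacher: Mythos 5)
Your proposal is correct, and part (a) is essentially the paper's argument: the paper combines Proposition \ref{prop:3.1} with Definition \ref{simplified graph} to identify the attractor of $\widehat{G}$ with $K$, and then runs the same (GOSC) $\Rightarrow$ (GFTC) $\Rightarrow$ \cite[Theorem 1.6]{Ngai-Xu_2023} chain that proves Theorem \ref{thm:main1}; your phrasing ``apply Theorem \ref{thm:main1} to $\widehat{G}$'' is just that argument with the citation inlined (and note that by \eqref{eq:min} and \eqref{eq:gifs}, $\widehat{G}$ is itself a GIFS associated to $\{R_t\}_{t=1}^N$, so the application is legitimate; one small inaccuracy is that edge maps of $\widehat{G}$ are a \emph{subset} of those of $G$ by Definition \ref{simplified graph}(a), not compositions along paths, but this only makes your similitude claim easier). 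For part (b) you take a genuinely different and shorter route: you observe that for $\#\widehat{V}=1$ the incidence matrix is the $1\times 1$ matrix $\bigl[\sum_{t=1}^m\rho_t^\alpha\bigr]$ and that (GOSC) degenerates to (OSC), so (b) is the single-vertex specialization of (a). The paper instead proves (b) directly by a Moran--Hutchinson-type argument, using the volume-counting Lemma \ref{lem:4.1} (valid since $M$ is locally Euclidean, so the curvature hypotheses hold trivially) together with the classical method of \cite{Falconer_2003}. Your derivation buys economy and avoids re-running the covering argument; the paper's buys a self-contained, elementary proof of the Moran formula on the manifold that does not route through the finite-type machinery of \cite{Ngai-Xu_2023}. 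Both are valid.
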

	
	It is worth pointing out that Balogh and Rohner \cite{Balogh-Rohner_2007}
	extended the Moran-Hutchinson theorem (see \cite{Hutchinson_1981}) to the more general setting of doubling metric spaces. Wu and Yamaguchi \cite{Wu-Yamaguchi_2017} generalized the results of Balogh and Rohner.

For IFSs on $\mathbb{R}^n$, the finite type was first introduced by the second author and Wang \cite{Ngai-Wang_2001}, and was extended to the general finite type condition independently by Jin and Yau \cite{Jin-Yau_2005}, Lau and  the second author \cite{Lau-Ngai_2007}. The graph finite type condition (GFTC) was extended to Riemannian manifolds by  the second author and Xu \cite{Ngai-Xu_2023}. In Section \ref{S:GFTC}, we study IRS attractors under the assumption that a GIFS associated to an IRS satisfies (GFTC). Our third goal is to obtain a method for computing the Hausdorff dimension of the attractor that improves the one in \cite{Ngai-Xu_2023}.
	
	The construction of the weighted incidence matrices that appear in the following theorems will be given in Section \ref{S:GFTC}.
	\begin{thm}\label{thm:main4}
		Let $M$, $\{R_t\}_{t=1}^N$, $K$, and $G=(V,E)$ be as in Theorem \ref{thm:main1}.  Assume that $G$ satisfies (GFTC).  Let $\lambda_\alpha$ be the spectral radius of the weighted incidence matrix $A_\alpha$ associated to $G$. Then
		$$\dim_H(K)=\alpha,$$
		where $\alpha$ is the unique real number such that $\lambda_\alpha=1$.
	\end{thm}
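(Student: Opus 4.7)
The plan is to reduce the IRS problem to a GIFS problem and then invoke (or adapt from Section \ref{S:GFTC}) the general Hausdorff dimension formula for GIFS attractors under (GFTC). First I would use the assumption that a GIFS $G$ associated to $\{R_t\}_{t=1}^N$ exists, together with the uniqueness of the attractor in the presence of such a $G$ (the result discussed around equation \eqref{eq:K=RK} and Proposition \ref{prop:2.2}), to identify $K$ with the attractor $K_G$ of $G$. This is the crucial reduction: once $K=K_G$, the problem loses its relational/multivalued character and becomes a question purely about GIFS attractors of contractive similitudes on $M$. Because $M$ is locally Euclidean and $G$ consists of similitudes, all local geometric computations reduce to Euclidean ones on sufficiently small charts, matching the hypotheses used in the Ngai--Xu framework.

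Next, following the construction to be given in Section \ref{S:GFTC}, I would describe the weighted incidence matrix $A_\alpha$ as follows. Using (GFTC), partition the set of admissible finite paths in $G$ into finitely many equivalence classes of neighborhood types; each such type $T_i$ carries a representative contraction ratio $\rho_i$ and a finite list of self-replicating ``children'' $T_j$ accessed by concatenating appropriate edges, each with its own contraction ratio. Define $A_\alpha=(a_{ij}(\alpha))$ by setting $a_{ij}(\alpha)$ to be the sum of $\rho^\alpha$ over all edges of $G$ that produce a transition from type $T_i$ to type $T_j$, where $\rho$ is the relevant local contraction ratio (possibly normalized by the representative of the parent type, as in \cite{Ngai-Xu_2023}). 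The key finiteness of this matrix is guaranteed precisely by (GFTC). Since $A_\alpha$ has nonnegative entries, its spectral radius $\lambda_\alpha$ is well defined; and, by the standard Perron--Frobenius argument together with the exponential decay of $\rho^\alpha$ in $\alpha$, one checks that $\lambda_\alpha$ is continuous and strictly decreasing in $\alpha$, with $\lambda_0\geq 1$ and $\lambda_\alpha\to 0$ as $\alpha\to\infty$; this yields existence and uniqueness of the $\alpha$ with $\lambda_\alpha=1$.

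The main dimension identity $\dim_H(K)=\alpha$ is then obtained by a two-sided estimate. For the upper bound I would construct, for each $n$, a cover of $K_G$ indexed by the finite-type cylinders of generation $n$ whose $\alpha$-mass is controlled by the $n$-th power of the weighted incidence matrix; summing over vertices and letting $n\to\infty$, the $\lambda_\alpha=1$ normalization forces $\mathcal{H}^\alpha(K)<\infty$, so $\dim_H(K)\leq \alpha$. For the lower bound I would use the (GOSC)-type separation embedded in (GFTC) to build a self-similar probability vector on neighborhood types that is a left eigenvector of $A_\alpha$ with eigenvalue $1$; pushing this down to a mass distribution on $K$ and applying the mass distribution principle yields $\dim_H(K)\geq \alpha$. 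This two-sided argument mirrors the proof of Theorem \ref{thm:main1} in Section \ref{S:GOSC}, with cylinders of uniform generation replaced by the coarser finite-type classes, and is the step I expect to require the most care — in particular, verifying that the local Euclidean approximation used in deriving \eqref{eq:matrix} gives multiplicatively bounded distortion uniformly over all neighborhood types. Combining the two bounds with the uniqueness of $\alpha$ established above gives Theorem \ref{thm:main4}.
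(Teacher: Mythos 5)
Your proposal follows essentially the same route as the paper: identify $K$ with the attractor of the associated GIFS $G$ and then invoke the Hausdorff dimension formula for GIFS attractors under (GFTC), which the paper obtains by citing \cite[Theorem 1.6]{Ngai-Xu_2023} rather than re-deriving the covering/mass-distribution estimates you sketch. One small correction: the identification $K=K_G$ should be taken from Proposition \ref{prop:3.1}, which follows directly from the defining equality \eqref{eq:gifs} of an associated GIFS, rather than from Proposition \ref{prop:2.2}, whose part (b) requires Condition (C) --- a hypothesis not assumed in Theorem \ref{thm:main4}.
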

	
	\begin{thm}\label{thm:main5}
		Let $M$, $\{R_t\}_{t=1}^N$, $K$, and $G=(V,E)$ be as in Theorem \ref{thm:main1}.  Let $\widehat{G}=(\widehat{V},\widehat{E})$ be a  minimal simplified GIFS associated to $G$. Assume that $\widehat{G}$ satisfies (GFTC). Let $\widehat{\lambda}_\alpha$ be the spectral radius of the  weighted incidence matrix $\widehat{A}_\alpha$ associated to $\widehat{G}$.  Then
		$$\dim_H(K)=\alpha,$$
		where $\alpha$ is the unique real number such that $\widehat{\lambda}_\alpha=1$.
	\end{thm}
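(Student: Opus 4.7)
The plan is to reduce Theorem~\ref{thm:main5} to Theorem~\ref{thm:main4} by exploiting the fact that the minimal simplified GIFS $\widehat{G}$ is itself a GIFS of contractive similitudes on $M$ whose attractor coincides with $K$. The central observation is that by the construction in Definition~\ref{defi:min}, $\widehat{G}$ is obtained from $G$ by merging or removing redundant vertices and edges in a way that preserves the invariant-set equation. Hence the attractor $K_{\widehat{G}}$ of $\widehat{G}$ must equal the attractor $K_G$ of $G$, which by the results of Section~\ref{S:IN} equals $K$.

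First I would record (or extract from the analysis of simplified GIFSs carried out earlier in the paper) the identification $K_{\widehat{G}} = K$. This allows us to regard $\widehat{G}$ as a GIFS associated to $\{R_t\}_{t=1}^N$ in exactly the sense required by Theorem~\ref{thm:main4}: it is a GIFS on the Riemannian manifold $M$, consisting of contractive similitudes, whose attractor is $K$. Next I would verify that the hypothesis (GFTC) for $\widehat{G}$ is precisely the form of GFTC needed to run the construction from Section~\ref{S:GFTC}, so that the weighted incidence matrix $\widehat{A}_\alpha$ is well-defined and its spectral radius $\widehat{\lambda}_\alpha$ admits the same monotonicity and continuity in $\alpha$ used in Theorem~\ref{thm:main4}. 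Applying Theorem~\ref{thm:main4} with $G$ replaced by $\widehat{G}$ then yields $\dim_H(K) = \alpha$, where $\alpha$ is the unique real number with $\widehat{\lambda}_\alpha = 1$.

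The main obstacle, and essentially the only nontrivial step, is to verify rigorously that the minimal simplification procedure preserves all the combinatorial data that enter the GFTC machinery: the basic neighborhoods, the neighborhood types, the transition weights, and ultimately the weighted incidence matrix. This reduces to an inspection of Definition~\ref{defi:min}, showing that the merging/removal rules never identify two vertices whose neighborhood structures differ and never delete an edge that contributes to an essential class of the graph. Once this bookkeeping is complete, the dimension formula for $\widehat{G}$ follows verbatim from the proof of Theorem~\ref{thm:main4}, and no new analytic input is needed.
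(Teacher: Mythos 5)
Your proposal matches the paper's argument: one combines Proposition~\ref{prop:3.1} with Definition~\ref{simplified graph} to identify the attractor of $\widehat{G}$ with that of $G$ (hence with $K$), and then applies Theorem~\ref{thm:main4} to $\widehat{G}$. The only difference is that your ``main obstacle'' --- checking that minimal simplification preserves the GFTC combinatorics --- is not actually needed, since the theorem assumes (GFTC) for $\widehat{G}$ as a hypothesis rather than deriving it from $G$.
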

	
	This paper is organized as follows. In Section \ref{S:Pre}, we give the definition of an IRS, and study the properties of its attractor. Section \ref{S:IFS} is devoted to the proof of Theorem \ref{thm:exi}. In Section \ref{S:GOSC}, we prove Theorems \ref{thm:main1}--\ref{thm:main2}. We also give  examples of IRSs satisfying the conditions of Theorem \ref{thm:main2}, and compute the Hausdorff dimension of the associated attractors.  In Section \ref{S:GFTC}, we prove Theorems \ref{thm:main4}--\ref{thm:main5} and provide examples to illustrate Theorem \ref{thm:main5}.

	\section{Iterated relation systems}\label{S:Pre}
	\setcounter{equation}{0}
	Throughout this paper, we let $\Sigma:=\{1,\ldots, N\}$, $\Pi_t:=\{1,\ldots,n_t\}$,  $\Delta_t:=\{1,\ldots,s_t\}$, $\Lambda_t:=\{1,\ldots,m_t\}$, and  $\Psi_t^i:=\{1,\ldots,p_t^i\}$, where $N\in \N$.

	We first introduce the definition of an IRS.
	\begin{defi}\label{defi:2.1}
		Let $ T $ be a topological space and let $E_0\subseteq T $ be a nonempty compact set.  Let $\{R_t\}_{t=1}^N$ be a family of relations defined on $E_0$.  For any $n\geq0$, let
		\begin{align}\label{E_n}
			E_{n+1}:=\bigcup_{t=1}^NR_t(E_n).
		\end{align}
		We call $\{R_t\}_{t=1}^N$  an \textit{iterated relation system  (IRS)} if it satisfies the following conditions:
		\begin{enumerate}
			\item[(a)] for any nonempty compact set $F\subseteq E_0$ and $t\in \Sigma$,  $R_t(F)$ is a nonempty compact set;
			\item[(b)] for any $t\in \Sigma$, $R_t(E_0)\subseteq E_0$.
		\end{enumerate}
	\end{defi}
	
	Examples of IRSs given in Examples \ref{exam:osc1}--\ref{exam:osctri} and \ref{exam:gftc1}--\ref{exam:gftctri}.

	\begin{prop}\label{prop:2.1}
		Let $ T $ be a topological space, and let $E_0\subseteq T $ be a nonempty compact set. Let $\{R_t\}_{t=1}^N$ be an IRS on $E_0$. For any $n\geq 0$, let $E_n$ be defined as in \eqref{E_n}.
		 Then $\bigcap_{n=0}^\infty E_n$ is a nonempty compact set.
	\end{prop}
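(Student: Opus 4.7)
The plan is to run the standard nested-compacta argument. First I would establish by induction on $n$ that each $E_n$ is a nonempty compact subset of $E_0$. The base case $n=0$ is given by hypothesis. For the inductive step, assume $E_n$ is nonempty and compact and $E_n \subseteq E_0$. Then property (a) of Definition \ref{defi:2.1} (applied to the nonempty compact set $E_n \subseteq E_0$) guarantees that each $R_t(E_n)$ is nonempty and compact, so the finite union $E_{n+1} = \bigcup_{t=1}^N R_t(E_n)$ is nonempty and compact. Moreover, using the monotonicity that holds for any set-valued map, namely $R_t(A) = \bigcup_{x \in A} R_t(x) \subseteq \bigcup_{x \in B} R_t(x) = R_t(B)$ whenever $A \subseteq B$, together with property (b), we get $R_t(E_n) \subseteq R_t(E_0) \subseteq E_0$, so $E_{n+1} \subseteq E_0$.

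Next I would verify that the sequence is decreasing: $E_{n+1} \subseteq E_n$ for every $n$. The base case $E_1 \subseteq E_0$ follows from property (b). For the inductive step, if $E_{n+1} \subseteq E_n$, then monotonicity of each $R_t$ gives $R_t(E_{n+1}) \subseteq R_t(E_n)$, and taking the union over $t \in \Sigma$ yields $E_{n+2} \subseteq E_{n+1}$.

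Finally I would conclude with the finite intersection property. The family $\{E_n\}_{n \geq 0}$ is a decreasing chain of nonempty compact, hence closed, subsets of the compact set $E_0$ (working inside $E_0$, or using Hausdorffness of the ambient space as is standard in the fractal geometry literature). Any finite subfamily has nonempty intersection, because the family is nested and each $E_n$ is nonempty. Therefore $\bigcap_{n=0}^\infty E_n \neq \emptyset$. Being a closed subset of the compact set $E_0$, this intersection is itself compact, which completes the proof.

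The only delicate point is the closedness of each $E_n$ needed to invoke the finite intersection property; in a general topological space compactness does not imply closedness, but this is harmless since $\{E_n\}$ is already a nested family of compact subsets of the compact space $E_0$, so the standard finite intersection argument applies. Beyond this, the proof is routine and uses only basic properties of relations (monotonicity on sets) together with conditions (a) and (b) of Definition \ref{defi:2.1}.
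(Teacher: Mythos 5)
Your proof is correct and follows essentially the same route as the paper's: an induction showing each $E_n$ is nonempty and compact via Definition \ref{defi:2.1}(a), an induction showing $E_{n+1}\subseteq E_n$ starting from Definition \ref{defi:2.1}(b), and the standard nested-compacta conclusion. You are in fact slightly more careful than the paper at the final step, where you make explicit the finite intersection property and the closedness issue that the paper leaves implicit (the paper simply asserts that a decreasing sequence of nonempty compact sets has nonempty compact intersection).
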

	
	\begin{proof}
		By Definition \ref{defi:2.1}(b), we have $E_1\subseteq E_0$.
		Assume that $E_{n+1}\subseteq E_n$ for some $n\geq0$. Then
		\begin{align*}
			E_{n+2}&=\bigcup_{t=1}^NR_t(E_{n+1})\subseteq\bigcup_{t=1}^NR_t(E_n)=E_{n+1}.
		\end{align*}
		Hence for any $n\geq0$, $E_{n+1}\subseteq E_n$.
		We know  that $E_0$ is a nonempty compact set. Assume that $E_n$ is a nonempty compact set for some $n\geq1$. By Definition \ref{defi:2.1}(a), we have $R_t(E_n)$ is a nonempty compact set for any $t\in\Sigma$. Hence $E_{n+1}$ is a nonempty compact set. Therefore, $\{E_n\}_{n=0}^\infty$ is a decreasing sequence of nonempty compact subsets of $ T $. This proves the proposition.	
	\end{proof}

	\begin{defi}\label{defi:2.2}
	Let $ T $, $E_0$, and $\{R_t\}_{t=1}^N$ be as in Proposition \ref{prop:2.1}. For any $n\geq0$, let $E_n$ be defined as in \eqref{E_n}. We call $K:=\bigcap_{n=0}^\infty E_n$ the {\em invariant set} or \textit{attractor} of $\{R_t\}_{t=1}^N$.
\end{defi}	

	 Next, we introduce the following condition, which we will call Condition (C).
	 
	\begin{defi}\label{condi}
	Let $ T $, $E_0$, and $\{R_t\}_{t=1}^N$ be as in Proposition \ref{prop:2.1}. For any $n\geq 0$, let $E_n$ be defined as in \eqref{E_n}. We say that  $\{R_t\}_{t=1}^N$ satisfies {\em Condition (C)} if it has the following property.
			\begin{enumerate}
			\item[(C)] For any $x\in E_0$, if there exist some $l\in \Sigma$, a subsequence $\{n_k\}$ of $\{n\}$, and $y_{n_k}\in E_{n_k}$ satisfying $x\in R_{l}(y_{n_k})\subseteq R_{l}(E_{n_k})$ for all $k\geq 0$, then there exists a subsequence $\{y_{n_{k_j}}\}$ of $\{y_{n_k}\}$ converging to some $y\in\bigcap_{n=0}^\infty E_n$  satisfying 	$x\in R_{l}(y)$.
		\end{enumerate}
	\end{defi}

	Condition (C) ensures that the inclusion $K\subseteq \bigcup_{t=1}^NR_t$ holds.	

	\begin{prop}\label{prop:2.2}
	Let $ T $, $E_0$, and $\{R_t\}_{t=1}^N$ be defined as in Proposition \ref{prop:2.1}. Let $K$ be the attractor of $\{R_t\}_{t=1}^N$.
\begin{enumerate}
	\item[(a)] $\bigcup_{t=1}^NR_t(K)\subseteq K$.
	\item[(b)] 	If Condition (C) holds, then $K\subseteq \bigcup_{t=1}^NR_t(K)$, and thus \eqref{eq:K=RK} holds.
	\end{enumerate}
	\end{prop}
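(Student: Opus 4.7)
The plan is to handle the two inclusions separately, with (a) being a routine set-chase and (b) being a pigeonhole-plus-compactness argument that invokes Condition (C). The monotonicity $E_{n+1}\subseteq E_n$, already established in the proof of Proposition \ref{prop:2.1}, will do most of the work for (a), while the nontrivial content of (b) is packaged exactly into Condition (C).

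For part (a), I would fix $t\in\Sigma$ and observe that since $K\subseteq E_n$ for every $n\geq 0$, applying $R_t$ yields
\[
R_t(K)\subseteq R_t(E_n)\subseteq \bigcup_{s=1}^N R_s(E_n)=E_{n+1}.
\]
Intersecting over all $n$ and using $E_{n+1}\subseteq E_n$ gives $R_t(K)\subseteq \bigcap_{n=0}^\infty E_n=K$. Taking the union over $t\in\Sigma$ yields the desired inclusion.

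For part (b), I would start with an arbitrary $x\in K$. Since $x\in E_{n+1}=\bigcup_{t=1}^N R_t(E_n)$ for every $n\geq 0$, we may pick $t_n\in\Sigma$ and $y_n\in E_n$ with $x\in R_{t_n}(y_n)$. Because $\Sigma=\{1,\dots,N\}$ is finite, the pigeonhole principle supplies some $l\in\Sigma$ and a subsequence $\{n_k\}$ of $\{n\}$ with $t_{n_k}=l$ for every $k$; hence $x\in R_l(y_{n_k})\subseteq R_l(E_{n_k})$ for all $k$. This is precisely the hypothesis of Condition (C), which then delivers a further subsequence $\{y_{n_{k_j}}\}$ converging to some $y\in\bigcap_{n=0}^\infty E_n=K$ with $x\in R_l(y)\subseteq\bigcup_{t=1}^N R_t(K)$. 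Since $x\in K$ was arbitrary, the reverse inclusion $K\subseteq\bigcup_{t=1}^N R_t(K)$ follows, and combining with (a) yields \eqref{eq:K=RK}.

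The genuinely delicate step is the passage to the limit in (b): the witnesses $y_n$ live in the shrinking sets $E_n$, and we need a limit point that both lies in $K$ and still satisfies $x\in R_l(y)$. Condition (C) is tailored precisely to provide this, so the proof reduces to assembling the ingredients (pigeonholing the indices $t_n$ and choosing witnesses $y_n\in E_n$) in the exact form that (C) consumes. No additional topological hypotheses beyond those built into Definition \ref{defi:2.1} should be required.
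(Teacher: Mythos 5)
Your proof is correct and follows essentially the same route as the paper's: part (a) is the identical set-chase, and in part (b) you pigeonhole the finitely many indices $t_n$ to extract a constant subsequence $t_{n_k}=l$ and then invoke Condition (C) exactly as the paper does in its Claim 1. The only difference is a slight streamlining: the paper splits (b) into two claims, passing through the intermediate set $\bigcup_{t=1}^N\bigcap_{n=0}^\infty R_t(E_n)$ and applying Condition (C) a second time, whereas you go directly from $y\in\bigcap_{n=0}^\infty E_n=K$ and $x\in R_l(y)$ to $x\in\bigcup_{t=1}^N R_t(K)$, which reaches the same conclusion with one application of Condition (C).
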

	\begin{proof}
	(a) By the definition of $K$, for any $n\geq0$, we have $K\subseteq E_n$. Hence, for any $t\in\Sigma$,
		$$R_t(K)\subseteq\bigcup_{t=1}^NR_t(E_n)\subseteq E_n.$$ 
		Thus,
	 	$\bigcup_{t=1}^NR_t(K)\subseteq\bigcap_{n=0}^\infty E_n=K.$
		
	(b)  We first prove the following claims.
		
		\noindent{\em Claim 1.  $\bigcap_{n=0}^\infty\bigcup_{t=1}^NR_t(E_n)\subseteq\bigcup_{t=1}^N\bigcap_{n=0}^\infty R_t(E_n)$.}  Let $x\in\bigcap_{n=0}^\infty\bigcup_{t=1}^NR_t(E_n)$. Then for any $n\geq 0$, there exists $l_n\in \Sigma$ such that $x\in R_{l_n}(E_{n})$. Hence there exist some $l\in \Sigma$ and a subsequence $\{n_k\}$ of $\{n\}$ such that $x\in R_{l}(E_{n_k})$ for all $k\geq 0$. Let $y_{n_k}\in E_{n_k}$ such that $x\in R_l(y_{n_k})$. By Condition (C), there exists a subsequence $\{y_{n_{k_j}}\}$ converging to some $$y\in\bigcap_{n=0}^\infty E_n\qquad \text{satisfying}\qquad x\in R_l(y)\subseteq \bigcap_{n=0}^\infty R_l(E_n).$$ Therefore, we have $x\in\bigcup_{t=1}^N\bigcap_{n=0}^\infty R_t(E_n)$.
		
		\noindent{\em Claim 2.  $\bigcup_{t=1}^N\bigcap_{n=0}^\infty R_t(E_n)\subseteq\bigcup_{t=1}^NR_t\big(\bigcap_{n=0}^\infty E_n\big)$.}  Let $x\in\bigcup_{t=1}^N\bigcap_{n=0}^\infty R_t(E_n)$. Then there exists $l\in\Sigma$ such that for any $n\geq0$, $x\in R_l(E_n)$. Let $y_n\in E_n$ such that $x\in R_l(y_n)$.
	By Condition (C), there exists a subsequence $\{y_{n_k}\}$ converging to some $$y\in\bigcap_{n=0}^\infty E_n \qquad \text{satisfying}\qquad x\in R_l(y)\subseteq R_l\Bigg(\bigcap_{n=1}^\infty E_n\Bigg).$$ Therefore, we have $x\in \bigcup_{t=1}^NR_t(\bigcap_{n=0}^\infty E_n)$.	
		By  Claims 1 and 2, we have
		$$K=\bigcap_{n=0}^\infty\bigcup_{t=1}^N R_t(E_n)\subseteq\bigcup_{t=1}^N\bigcap_{n=0}^\infty R_t(E_n)\subseteq\bigcup_{t=1}^NR_t\Bigg(\bigcap_{n=0}^\infty E_n\Bigg)=\bigcup_{t=1}^NR_t(K).$$
		Using the result of (a), we have $K= \bigcup_{t=1}^NR_t(K)$.
		This proves (b).
	\end{proof}
	
The following counterexample shows that Proposition \ref{prop:2.2}(b) fails if Condition (C) is not assumed.	
	
 \begin{exam}\label{defi(c)}
		Let $E_0:=\{0,1\}\bigcup\{1/2^n:n\in\mathbb{N}_+\}$ and let $R_1$ be an IRS on $E_0$  defined as
		\begin{align*}
			R_1(\boldsymbol{x}) &:=
			\begin{cases}
				1,  &   \,\, \boldsymbol{x}=0, \\
				\{0,1/2^{n+1}\}, &   \,\,\boldsymbol{x}=1/2^n,\text{ where }n\in\mathbb{N}_+,\\
				1, &   \,\,\boldsymbol{x}=1.\\
			\end{cases}
		\end{align*}
	Then $E_n=\{0,1\}\bigcup\{1/2^k:k\geq n+1\}$, where $n\in\mathbb{N}_+$.
		 Let $x=0\in E_0$,  $y_{n_k}=1/2^k\in E_{n_k}$, and $0\in R_1(y_{n_k})$, where $k\geq n+1$ and $n\in\mathbb{N}_+$. Then
	 $\{y_{n_k}\}$  converges to $y:=0$, but  $x\notin R_1(y)=\{1\}$. Hence $R_1$ does not satisfy Condition (C). Moreover, we have
			 $K:=\bigcap_{n=0}^\infty E_n=\{0,1\}$ and $R_1(K)=\{1\}$. Hence $K\not\subseteq R_1(K)$.

\end{exam}
	
	For a general IRS, there could be more than one nonempty compact set $K$ satisfying  \eqref{eq:K=RK}. The following example illustrates this.

	\begin{exam}\label{exa:notunique}
		Let $E_0:=[0,1]\subseteq \R$, and let $\{R_t\}_{t=1}^2$ be an IRS on $E_0$ defined as
		\begin{align*}
			R_1(\boldsymbol{x}) &:= (3/4)\boldsymbol{x};\\
			R_2(\boldsymbol{x}) &:=
			\begin{cases}
				0,  &    \boldsymbol{x}=0, \\
				(1/4)\boldsymbol{x}+3/4, &    0<\boldsymbol{x}<1, \\
				\{3/4,1\}, &    \boldsymbol{x}=1.
			\end{cases}
		\end{align*}
		Then there exist at least two nonempty compact sets $K$ satisfying equation \eqref{eq:K=RK}.
	\end{exam}
	\begin{proof}
		We know that $E_0=[0,1]$, \ldots, $E_n=[0,1]$. Hence $K=[0,1]$. By Proposition \ref{prop:2.2}, we have $K=\bigcup_{t=1}^2R_t(K)$. Now let $\widetilde{K}=\{0\}$. Note that $$\{0\}=R_1\big(\{0\}\big)\bigcup R_2\big(\{0\}\big).$$ Then $\widetilde{K}=\bigcup_{t=1}^2R_t(\widetilde{K})$, which  completes the proof.
	\end{proof}
	
	In the next section, we will study the uniqueness of a nonempty compact set $K$ satisfying the equality \eqref{eq:K=RK}.

	\section{Associated graph-directed iterated function systems}\label{S:IFS}
	\setcounter{equation}{0}
	In this section, we let $ X $ be a complete metric space, $E_0\subseteq X $ be a nonempty compact set, and $\{R_t\}_{t=1}^N$ be an IRS on $E_0$. We introduce the notion of a graph-directed iterated function system (GIFS) associated to $\{R_t\}_{t=1}^N$, and prove Theorem \ref{thm:exi}.
	
	{\em A graph-directed iterated function system} (GIFS) of contractions $\{f_e\}_{e\in E}$ on $X$ is an orderd pair $G = (V, E)$, where $V = \{1,\ldots, m\}$ is the set of vertices and $E$ is
	the set of all directed edges.  We allow more than one edge between two vertices. A {\em directed path} in $G$ is a finite string $\mathbf{e} =(e_1,\ldots, e_q)$ of edges in $E$ such that the terminal vertex of each $e_i$ is the initial vertex of the edge $e_{i+1}$. For such a path, denote the length of  $\mathbf{e}$ by
	$|\mathbf{e}|=q$. For any two vertices $i,j\in V$ and any positive integer $q$, let $E^{i,j}$ be the set
	of all directed edges from $i$ to $j$, $E^{i,j}_q$ be the set of all directed paths of length $q$ from
	$i$ to $j$, $E_q$ be the set of all directed paths of length $q$, and $E^{*}$ be the set of all directed
	paths, i.e.,
	\begin{align*}
		E_q:=\bigcup_{i,j=1}^mE^{i,j}_q\qquad \text{and}\qquad E^{*}:=\bigcup_{q=1}^\infty E_q.
	\end{align*}
Then there exists a unique collection of nonempty compact sets  $\{\widetilde{K}_i\}_{i\in V}$ satisfying
\begin{align*}
	\widetilde{K}_i=\bigcup_{j=1}^m\bigcup_{e\in E^{i,j}} f_e(\widetilde{K}_j), \qquad i\in V
\end{align*}
(see, e.g., \cite{Mauldin-Williams_1988,Edgar_1990,Olsen_1994}).
Let	$\widetilde{K}:=\bigcup_{i=1}^m \widetilde{K}_i$
be the {\em invariant set} or {\em attractor} of the GIFS.
Recall that a GIFS $G = (V, E)$ is said to be {\em strongly connected} provided that whenever $i,j$, there exists a directed path from $i$ to $j$.
	
	\begin{defi}\label{defi:3.1}
		Let $X$ be a complete metric space and let $E_0\subseteq X $ be a nonempty compact set. Let $\{R_t\}_{t=1}^N$ be an IRS on $E_0$. For any $k\geq0$, let $E_{k+1}$  be defined as in \eqref{E_n}. We assume that there exists a finite integer $k_0\geq 0$ such that $E_{k_0}=\bigcup_{j=1}^mW_j$, where each $W_j\subseteq E_{k_0}$ is compact. Suppose that there exists a GIFS $G=(V,E)$ of contractions $\{f_e\}_{e\in E}$, where $V=\{1,\ldots,m\}$,  such that for any $q\geq 1$,
			\begin{align}\label{eq:gifs}
				\bigcup_{t\in\Sigma^q}R_t(E_{k_0})=\bigcup_{i,j=1}^m\bigcup_{\mathbf{e}\in E_{q}^{i,j}}f_\mathbf{e}(W_j).
			\end{align}
		Then $G$ is called a \textit{graph-directed iterated function system (GIFS) associated to} $\{R_t\}_{t=1}^N$ on $E_{k_0}$. We call
	\begin{align*}
	\widetilde{K}:=\bigcap_{q=1}^\infty\bigcup_{i,j=1}^m\bigcup_{\mathbf{e}\in E_{q}^{i,j}}f_\mathbf{e}(W_j)
	\end{align*}
 the {\em attractor} generated by the GIFS $G$ associated to $\{R_t\}_{t=1}^N$. 
	\end{defi}

	We have the following proposition.
	\begin{prop}\label{prop:3.1}
		Let $X$, $E_0$, and $\{R_t\}_{t=1}^N$ be as in Definition \ref{defi:3.1}.  Let $K$ be the attractor of $\{R_t\}_{t=1}^N$. Assume that there exists a GIFS $G=(V,E)$ associated to $\{R_t\}_{t=1}^N$ and assume that $G$ consists of contractions $\{f_e\}_{e\in E}$.
	 Let $\widetilde{K}$ be the attractor of G. Then $K=\widetilde{K}$.
	\end{prop}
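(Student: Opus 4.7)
The plan is to unfold both $K$ and $\widetilde{K}$ into a common expression involving the sets $E_n$, using the defining equation of the associated GIFS. The main observation is that the right-hand side of \eqref{eq:gifs} is precisely the expression whose nested intersection defines $\widetilde{K}$, while the left-hand side is essentially $E_{k_0+q}$, so everything reduces to the fact that $\{E_n\}$ is a decreasing sequence.

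First I would prove by induction on $q\geq 1$ that
\begin{align*}
E_{k_0+q}=\bigcup_{t\in\Sigma^q}R_t(E_{k_0}),
\end{align*}
where for a multi-index $t=(t_1,\dots,t_q)\in\Sigma^q$ the symbol $R_t$ denotes the composition $R_{t_1}\circ\cdots\circ R_{t_q}$ of relations. The base case $q=1$ is just the definition \eqref{E_n}, and the inductive step follows by substituting $E_{k_0+q}$ into \eqref{E_n} and interchanging unions. Combining this identity with the hypothesis \eqref{eq:gifs} gives, for every $q\geq 1$,
\begin{align*}
E_{k_0+q}=\bigcup_{i,j=1}^m\bigcup_{\mathbf{e}\in E_q^{i,j}}f_\mathbf{e}(W_j).
\end{align*}

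Next I would take intersections over $q\geq 1$ on both sides. By Definition \ref{defi:3.1}, the right-hand side intersected over $q$ is exactly $\widetilde{K}$. On the left-hand side, Proposition \ref{prop:2.1} shows $\{E_n\}_{n=0}^\infty$ is a decreasing sequence of nonempty compact sets, so
\begin{align*}
\bigcap_{q=1}^\infty E_{k_0+q}=\bigcap_{n=0}^\infty E_n=K.
\end{align*}
This yields $K=\widetilde{K}$, completing the proof.

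I do not expect any genuine obstacle here: the result is essentially a bookkeeping exercise once the inductive identity $E_{k_0+q}=\bigcup_{t\in\Sigma^q}R_t(E_{k_0})$ is established. The only point that requires any care is confirming that the composition $R_t$ of relations behaves correctly on sets, i.e. $R_{t_1}(R_{t_2}(F))$ is the image of $F$ under $R_{t_1}\circ R_{t_2}$; this is a direct consequence of the set-theoretic definition of relational composition and does not use any metric or contractive structure.
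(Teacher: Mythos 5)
Your proposal is correct and follows essentially the same route as the paper: the paper's proof likewise equates $K=\bigcap_{q=1}^\infty E_{k_0+q}=\bigcap_{q=1}^\infty\bigcup_{t\in\Sigma^q}R_t(E_{k_0})$ with $\bigcap_{q=1}^\infty\bigcup_{i,j=1}^m\bigcup_{\mathbf{e}\in E_q^{i,j}}f_\mathbf{e}(W_j)=\widetilde{K}$ via \eqref{eq:gifs} and the monotonicity of $\{E_n\}$. You merely make explicit the induction giving $E_{k_0+q}=\bigcup_{t\in\Sigma^q}R_t(E_{k_0})$, which the paper leaves implicit.
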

	\begin{proof}
		By Definition \ref{defi:3.1}, for any $q\geq1$, we have
		\begin{align}\label{eq:q}
			\bigcap_{q=1}^\infty\bigcup_{t\in\Sigma^q}R_t(E_{k_0})=\bigcap_{q=1}^\infty\bigcup_{i,j=1}^m\bigcup_{\mathbf{e}\in E_{q}^{i,j}}f_\mathbf{e}(W_j).
		\end{align}
		By Definition \ref{defi:2.1} and \eqref{eq:q}, we have
		\begin{align*}
			K=\bigcap_{n=0}^\infty E_{n+1}=\bigcap_{q=1}^\infty E_{k_0+q}=\bigcap_{q=1}^\infty\bigcup_{t\in\Sigma^q}R_t(E_{k_0})
			=\bigcap_{q=1}^\infty\bigcup_{i,j=1}^m\bigcup_{\mathbf{e}\in E_{q}^{i,j}}f_\mathbf{e}(W_j)= \widetilde{K}.
		\end{align*}
	\end{proof}

	\begin{prop}\label{prop:3.2}
		Let $\{R_t\}_{t=1}^N$ and  $K$ be defined as in Definition \ref{defi:3.1}. Assume that there exists a GIFS $G=(V,E)$ associated to $\{R_t\}_{t=1}^N$ and assume that $G$ consists of contractions $\{f_e\}_{e\in E}$.  Then there exists a unique nonempty compact set $K$ satisfying \eqref{eq:K=RK}.
	\end{prop}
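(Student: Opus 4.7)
The plan is to show that $\widetilde{K}$---the GIFS attractor of Definition \ref{defi:3.1}, equal to the IRS attractor $K$ of Definition \ref{defi:2.2} by Proposition \ref{prop:3.1}---is the unique nonempty compact set satisfying \eqref{eq:K=RK}. Existence of such a fixed point will be easy: $\bigcup_t R_t(\widetilde{K}) \subseteq \widetilde{K}$ comes from Proposition \ref{prop:2.2}(a), and the reverse inclusion will use the GIFS Hutchinson identity $\widetilde{K} = \bigcup_{i,j,e} f_e(\widetilde{K}_j)$ combined with the fact---implicit in the construction behind Theorem \ref{thm:exi}---that each $f_e$ is a branch of some $R_{t_e}$ restricted to the corresponding piece, so that $f_e(\widetilde{K}_j) \subseteq R_{t_e}(\widetilde{K})$.

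For uniqueness, let $K'$ be any nonempty compact set satisfying $K' = \bigcup_{t=1}^N R_t(K')$. I would first show $K' \subseteq \widetilde{K}$: iterating \eqref{eq:K=RK} gives $K' = \bigcup_{t\in\Sigma^q} R_t(K')$ for every $q\geq 1$, and since $K' \subseteq E_0$ and $R_t(E_0) \subseteq E_0$ by Definition \ref{defi:2.1}(b), a $k_0$-fold iteration yields $K' \subseteq E_{k_0}$. The GIFS identity \eqref{eq:gifs} then gives
$$K' \subseteq \bigcup_{t\in\Sigma^q} R_t(E_{k_0}) = \bigcup_{i,j=1}^m \bigcup_{\mathbf{e}\in E_q^{i,j}} f_\mathbf{e}(W_j) \quad\text{for every } q\geq 1.$$
Because each $f_e$ is a contraction, $\operatorname{diam}(f_\mathbf{e}(W_j)) \to 0$ uniformly as $q\to\infty$, so intersecting over $q$ forces $K' \subseteq \widetilde{K}$.

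For the reverse inclusion $K' \supseteq \widetilde{K}$, I would reduce to the Mauldin--Williams uniqueness of the GIFS fixed-point tuple. Define $K'_i := K' \cap W_i$ for $i \in V$ and aim to show that $(K'_i)$ is a fixed point of the GIFS Hutchinson operator, namely $K'_i = \bigcup_{j=1}^m\bigcup_{e\in E^{i,j}} f_e(K'_j)$ for every $i$. Granted this, the Banach fixed point theorem applied to $m$-tuples of compact sets under the product Hausdorff metric forces $(K'_i) = (\widetilde{K}_i)$, whence $K' = \widetilde{K}$. The main obstacle will be this per-vertex identity, since \eqref{eq:gifs} is asserted only globally for $E_{k_0}$, whereas one needs the localized statement $\bigcup_{t=1}^N R_t(F \cap W_j) = \bigcup_{i,\,e\in E^{i,j}} f_e(F \cap W_j)$ valid for any compact $F \subseteq E_{k_0}$, specialized to $F = K'$. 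This refinement must be extracted from the good partition structure of Theorem \ref{thm:exi}---each $f_e$ with $e\in E^{i,j}$ being a branch of $R_{t_e}|_{W_j}$ with image inside $W_i$---and once in hand, partitioning $K' = \bigcup_t R_t(K')$ by target piece $W_i$ gives the required fixed-point equation for $(K'_i)$.
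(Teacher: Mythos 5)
Your first inclusion ($K'\subseteq K$) is essentially the paper's: iterate \eqref{eq:K=RK}, use $K'\subseteq E_0$ and monotonicity of relations on sets to get $K'\subseteq E_n$ for all $n$, hence $K'\subseteq\bigcap_n E_n=K$. (The detour through \eqref{eq:gifs} and the diameter estimate is unnecessary here, but harmless.)

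The reverse inclusion is where you diverge from the paper, and where your proposal has a genuine gap that you name but do not close. You want to set $K'_i:=K'\cap W_i$, verify the per-vertex Hutchinson identity $K'_i=\bigcup_{j}\bigcup_{e\in E^{i,j}}f_e(K'_j)$, and invoke Mauldin--Williams uniqueness. This needs the localized identity $\bigcup_t R_t(F\cap W_j)=\bigcup_{i,\,e\in E^{i,j}}f_e(F\cap W_j)$ for arbitrary compact $F\subseteq E_{k_0}$, which is strictly stronger than what Definition \ref{defi:3.1} provides: \eqref{eq:gifs} is asserted only for the single set $E_{k_0}$. Moreover, the hypotheses of Proposition \ref{prop:3.2} assume only that \emph{some} associated GIFS exists in the sense of Definition \ref{defi:3.1}; they do not assume it arises from the good-partition construction of Theorem \ref{thm:exi}, so "extracting" the branch structure from that theorem is not available. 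There is also a secondary obstruction: some $K'\cap W_i$ may be empty, and the Banach/Hausdorff-metric argument for uniqueness of the GIFS fixed-point tuple does not apply directly to tuples with empty components (one would need strong connectivity or a separate argument to rule this out). The paper avoids all of this with a lighter argument: given $z\in K=\widetilde K$, code $z$ by an infinite edge path, pick $x^*\in K_1$, and observe that $f_{e_{j_1}}\cdots f_{e_{j_n}}(x^*)$ lies in $\bigcup_{\mathbf t\in\Sigma^n}R_{\mathbf t}(K_1)=K_1$ and converges to $z$ by contractivity, so $z\in K_1$ by closedness of $K_1$. If you want to keep your route, you must either add the localized set-level identity as a hypothesis or prove it for the GIFS produced by Theorem \ref{thm:exi}, and handle the empty-component issue; as written, the second half does not go through.
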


	\begin{proof}
		Assume that there exists another nonempty compact set $K_1$ satisfying \eqref{eq:K=RK}. Note that 
		\begin{align*}
			K_1=\bigcup_{t_1=1}^N\cdots\bigcup_{t_n=1}^NR_{t_1}\cdots R_{t_n}(K_1)
			\subseteq\bigcup_{t_1=1}^N\cdots\bigcup_{t_n=1}^NR_{t_1}\cdots R_{t_n}(E_0). 
		\end{align*}
		Hence
		\begin{align*}
			&	K_1\subseteq\bigcap_{n=1}^\infty\bigcup_{t_1=1}^N\cdots\bigcup_{t_n=1}^NR_{t_1}\cdots R_{t_n}(E_0)
			=\bigcap_{n=1}^\infty E_{n}
			=K.
		\end{align*}
		Next  we show that $K\subseteq	K_1$. 
	We know that
		$$	K_1=\bigcap_{n=1}^\infty\bigcup_{t_1=1}^N\ldots\bigcup_{t_n=1}^NR_{t_1}\ldots R_{t_n}(K_1).$$
		For any $e\in E$, let $\rho_e$ be a contraction ratio of $f_e$.
		Then for any $x\in\widetilde{K}$ and for any integer $m$, $n$, where $m>n>0$, we have
		\begin{align*}
			&\big|R_{t_1}\cdots R_{t_{m-k_0}}(R_{t_{m-k_0+1}}\cdots R_{t_{m}}x)-R_{t_1}\cdots R_{t_{n-k_0}}(R_{t_{n-k_0+1}}\cdots R_{t_{n}}x)\big|\\
			=&\big|f_{e_{j_1}}\cdots f_{e_{j_{m-k_0}}}(R_{t_{m-k_0+1}}\cdots R_{t_{m}}x)-f_{e_{j_1}}\cdots f_{e_{j_{n-k_0}}}(R_{t_{n-k_0+1}}\cdots R_{t_{n}}x)\big|\\
			\leq&\,\rho_{e_{j_1}}\cdots \rho_{e_{j_{n}}}|E_{k_0}|.
		\end{align*}
		Hence $\lim_{n\rightarrow\infty}R_{t_1}\cdots R_{t_{n-k_0}}(R_{t_{n-k_0+1}}\cdots R_{t_n}x)$ exists. Thus, for any $z\in K$, there exists $x^*\in K_1$ satisfying
		$$z=\lim_{n\rightarrow\infty}f_{e_{j_1}}\cdots f_{e_{j_{n-k_0}}}(x^*)\in K_1.$$
		Therefore $K\subseteq K_1$. This proves the proposition. 
	\end{proof}
	
Let $G=(V,E)$ be a GIFS of contractions $\{f_e\}_{e\in E}$.  We say that $\{U_i\}_{i=1}^m$ is an {\em invariant family}  under  $G$ if
		\begin{align*}
			\bigcup_{e\in E^{i,j}}f_e(U_j)\subseteq U_i\qquad \text{for all}\,\,i,j\in \{1,\ldots,m\}.
	\end{align*}
	
	\begin{defi}\label{defi:good}
		Let $ X $, $E_0$, $\{R_t\}_{t=1}^N$, and $k_0$ be defined as in Definition \ref{defi:3.1}.
		\begin{enumerate}
			\item[(a)]	 Assume that conditions (i) and (ii) below hold.
			\begin{enumerate}
				\item[(i)]	For any $t\in\Sigma$ and $x\in E_{k_0-1}$,
				$\#\{R_t(x)\}<\infty.$
				\item[(ii)] For any $t\in\Sigma$, there exists $F_t^i\in{\rm dom}(R_t)\subseteq E_{k_0-1}$ such that for each $l\in \Pi_t$ and $i\in\{1,\ldots,i_t\}$, there exists a contraction $f_t^{l,i}: F_t^i\to f_t^{l,i}(F_t^i)$ and the following holds
				$$\bigcup_{t=1}^N\bigcup_{i=1}^{i_t}R_t(F_t^i)=\bigcup_{t=1}^N\bigcup_{l=1}^{n_t}\bigcup_{i=1}^{i_t}f_t^{l,i}(F_t^i).$$
			\end{enumerate}
			Then we say that $\{f_t^{l,i}\}_{t=1,l=1,i=1}^{N,n_t,i_t}$ is {\em a finite family of contractions decomposed from $\{R_t\}_{t=1}^N$}, and that each $f_t^{l,i}$ is a {\em branch} of $R_t|_{F_t^i}$.
			\item[(b)] Let $\{W_t^{\alpha,\beta}\}_{t=1,\alpha=1,\beta=1}^{N,m_t,p_t}$ be a partition of $E_{k_0}$ satisfying
			$$\bigcup_{t=1}^N\bigcup_{\alpha=1}^{m_t}\bigcup_{\beta=1}^{p_t}W_t^{\alpha,\beta}=E_{k_0}.$$
			Let $\mathcal{W}_{k_0}:=\{W_t^{\alpha,\beta}:t\in \Sigma, \alpha\in \Lambda_t,\beta\in \{1,\ldots,p_t\}\}$ be a collection of compact subsets of $E_{k_0}$. Let $\{g_t^{l,\alpha,\beta}:W_t^{\alpha,\beta}\to g_t^{l,\alpha,\beta}(W_t^{\alpha,\beta})\}_{t=1,l=1,\alpha=1,\beta=1}^{N,n_t,m_t,p_t}$ be a finite family of contractions decomposed from $\{R_t\}_{t=1}^N$. Fix $t\in \Sigma$. Assume that for any $\alpha\in \Lambda_t$ and $\beta\in \Psi_t$,  $W_t^{\alpha,\beta}$ is invariant under $g_t^{l,\alpha,\beta}$ for all $l\in \Pi_t$.
			Then  we call $\mathcal{W}_{k_0}$ {\em a good partition of $E_{k_0}$ with respect to $\{g_t^{l,\alpha,\beta}\}_{t=1,l=1,\alpha=1,\beta=1}^{N,n_t,m_t,p_t}$}.
		\end{enumerate}
		
	\end{defi}
	
	\begin{proof}[Proof of Theorem \ref{thm:exi}] To prove this theorem, we consider the following two cases.
		
		\noindent{\em Case 1.  For any $t\in\Sigma$, $H_t=\emptyset$.} In this case, $\{R_t\}_{t=1}^N$ is an IFS. Hence there exists a GIFS associated to $\{R_t\}_{t=1}^N$ on $E_{k_0}$, where $k_0\geq0$.

		\noindent{\em Case 2.  For some $t\in\Sigma$, $H_t\neq\emptyset$.}
		In this case, in order to prove the existence of the GIFS associated to $\{R_t\}_{t=1}^N$, we first prove the following five claims.
		
		\noindent{\em Claim 1.  For any $t\in\Sigma$, we have $R_t(E_{k_0-1})=\Big(\bigcup_{l=1}^{n_t}\bigcup_{i=1}^{m_t}\overline{h_t^{l,i}(H_t^i)}\Big)\bigcup\Big(\bigcup_{i=1}^{s_t}\overline{h_t^{0,i}(J_t^i)}\Big)$.}
		By (b)(i), we have
		$$R_t(E_{k_0-1})=\bigcup_{l=1}^{n_t}\bigcup_{i=1}^{m_t}h_t^{l,i}(H_t^i)\bigcup \Big(\bigcup_{i=1}^{s_t} h_t^{0,i}(J_t^i)\Big)\subseteq\bigcup_{l=1}^{n_t}\bigcup_{i=1}^{m_t}\overline{h_t^{l,i}(H_t^i)}\bigcup\Big(\bigcup_{i=1}^{s_t}\overline{h_t^{0,i}(J_t^i)}\Big).$$
		Since $R_t(E_{k_0-1})$ is compact, we have
		$\bigcup_{l=1}^{n_t}\bigcup_{i=1}^{m_t}\overline{h_t^{l,i}(H_t^i)}\bigcup\Big(\bigcup_{i=1}^{s_t}\overline{h_t^{0,i}(J_t^i)}\Big)\subseteq R_t(E_{k_0-1}).$
		This proves Claim 1.
		
		\noindent{\em Claim 2.  For any $t\in\Sigma$, let
			\begin{align*}
				& \underline{W}_t^{0,i}:=\overline{h_t^{0,i}(J_t^i)},\quad\text{where}\,\,i\in\Delta_t,\,\,\text{and}\\
				&\underline{W}_t^{l,i}:=\overline{h_t^{l,i}(H_t^i)},\quad \underline{W}_t^{n_t+1,i}:=\overline{E_{k_0}\bigcap H_t^i},\quad\text{where}\,\,l\in\Pi_t\,\,\text{and}\,\, i\in\Lambda_t.
			\end{align*}
			Then
			\begin{align}\label{eq:E_k0}
				E_{k_0}=\bigcup_{t=1}^N\Bigg(\bigcup_{i=1}^{s_t}\underline{W}_t^{0,i}\bigcup\Big( \bigcup_{l=1}^{n_t+1}\bigcup_{i=1}^{m_t}\underline{W}_t^{l,i}\Big)\Bigg).
			\end{align}
		} By Claim 1, we have
		\begin{align*}
			E_{k_0}=&\bigcup_{t=1}^NR_t(E_{k_0-1})=\bigcup_{t=1}^N\Bigg(\Big(\bigcup_{l=1}^{n_t}\bigcup_{i=1}^{m_t}\overline{h_t^{l,i}(H_t^i)}\Big)\bigcup\Big(\bigcup_{i=1}^{s_t}\overline{h_t^{0,i}(J_t^i)}\Big)\Bigg)\\
			=&\bigcup_{t=1}^N\Bigg(\bigcup_{i=1}^{s_t}\underline{W}_t^{0,i}\bigcup \Big( \bigcup_{l=1}^{n_t}\bigcup_{i=1}^{m_t}\underline{W}_t^{l,i}\Big)\Bigg).
		\end{align*}
		Note that for any $t\in\Sigma$ and $i\in\Lambda_t$, we have
		$\underline{W}_t^{n_t+1,i}=\overline{E_{k_0}\bigcap H_t^i}\subseteq E_{k_0}.$
		Hence \eqref{eq:E_k0} holds.
		
		\noindent{\em Claim 3.  For any $t\in\Sigma$ and $i\in\Lambda_t$, let $\{x_n\}$ be a convergent sequence in $H_t^i$. Then for any $l\in\Pi_t$, the sequence $\{h_t^{l,i}(x_n)\}$ converges. If $H_t^i$ is an open set, then we can extend $h_t^{l,i}$ from $H_t^i$ to $\overline{H_t^i}$ and let $\tilde{h}_t^{l,i}:\overline{H_t^i}\rightarrow \underline{W}_t^{l,i}$ be defined as
			\begin{align}\label{eq:hl}
				\tilde{h}_t^{l,i}(x): =
				\begin{cases}
					h_t^{l,i}(x),  &   x\in H_t^i, \\
					\lim_{n\rightarrow\infty}h_t^{l,i}(x_n), &   x\in\overline{H_t^i}\backslash H_t^i,
				\end{cases}
			\end{align}
			where for any $x\in  \overline{H_t^i}\backslash H_t^i$, $x_n\rightarrow x$ ($n\rightarrow\infty$). Moreover, $\tilde{h}_t^{l,i}$ is a surjection.}
		In fact, by (b)(i),  we have $h_t^{l,i}$ is uniform continuous in $H_t^i$. Hence the sequence $\{h_t^{l,i}(x_n)\}$ converges, and thus we can define $h_t^{l,i}(x):=\lim_{n\rightarrow\infty}h_t^{l,i}(x_n)$ if $x\in \overline{H_t^i}\backslash H_t^i$. Note that for any $x\in\overline{H_t^i}\backslash H_t^i$, $h_t^{l,i}(x)$ is independent of the choice of the sequence $\{x_n\}$.
		Therefore we can extend $h_t^{l,i}$ from $H_t^i$ to $\overline{H_t^i}$ and let $\tilde{h}_t^{l,i}$ be defined as in \eqref{eq:hl}. To show that $\tilde{h}_t^{l,i}$ is a surjection, we let $y\in \underline{W}_t^{l,i}=\overline{h_t^{l,i}(H_t^i)}$. Then there exists a sequence $\{h_t^{l,i}(x_n)\}\subseteq h_t^{l,i}(H_t^i)$ such that
		\begin{align}\label{eq:6a}
			\lim_{n\rightarrow\infty} h_t^{l,i}(x_n)=y.
		\end{align}
		We know that $\{x_n\}$ is bounded, and hence there exists a convergent subsequence $\{x_{n_k}\}$ such that
		\begin{align}\label{eq:6b}
			\lim_{n\rightarrow\infty} x_{n_k}=x\in\overline{H_t^i}.
		\end{align}
		Note that
		\begin{align}\label{eq:6c}
			|\tilde{h}_t^{l,i}(x)-y|\leq|\tilde{h}_t^{l,i}(x)-h_t^{l,i}(x_{n_k})|+|h_t^{l,i}(x_{n_k})-y|.
		\end{align}
		Combining (b)(i) and the definition of $\tilde{h}_t^{l,i}$, we have for all $\delta>0$, there exists $ N_1\in\mathbb{N}$ such that for all $k>N$,
		\begin{align}\label{eq:6d}
			|h_t^{l,i}(x_{n_k})-\tilde{h}_t^{l,i}(x)|<\frac{\varepsilon}{2}.
		\end{align}
		By \eqref{eq:6a} and \eqref{eq:6b}, we have $\lim_{k\rightarrow\infty}h_t^{l,i}(x_{n_k})=y$, and thus for any $\varepsilon>0$, there exists $N_2\in\mathbb{N}$ such that for all $k>N_2$,
		\begin{align}\label{eq:6e}
			|h_t^{l,i}(x_{n_k})-y|<\frac{\varepsilon}{2}.
		\end{align}
		Let $N:=\max\{N_1,N_2\}$. Then for any $k>N$, \eqref{eq:6d} and \eqref{eq:6e} hold. By \eqref{eq:6c}, we have
		$$|\tilde{h}_t^{l,i}(x)-y|<\frac{\varepsilon}{2}+\frac{\varepsilon}{2}=\varepsilon.$$
		It follows that $\tilde{h}_t^{l,i}(x)=y$. This proves the Claim 3.

		\noindent{\em Claim 4.  For any $t\in\Sigma$ and $i\in\Delta_t$, let $\{x_n'\}$ be a convergent sequence in $J_t^i$. Then the sequence $\{h_t^{0,i}(x_n')\}$ converges. If $J_t^i$ is an open set, then  we can extend  $h_t^{0,i}$ from $J_t^i$ to $\overline{J_t^i}$, and let $\tilde{h}_t^{0,i}:\overline{J_t^i}\rightarrow \underline{W}_t^{0,i}$ be defined as
			\begin{align}\label{eq:h0}
				\tilde{h}_t^{0,i}(x): =
				\begin{cases}
					h_t^{0,i}(x),  & x\in J_t^i,\\
					\lim_{n\rightarrow\infty}h_t^{l,i}(x'_n), & x\in \overline{J_t^i}\backslash J_t^i,
				\end{cases}
			\end{align}
			where for any $x\in \overline{J_t^i}\backslash J_t^i$, $x'_n\rightarrow x$ when $n\rightarrow\infty$. Moreover, $\tilde{h}_t^{0,i}$ is a surjection.}
		The proof of Claim 4 is similar to that of Claim 3; we omit the details.

		\noindent{\em Claim 5.  For any $t\in\Sigma$,
			\begin{align}\label{eq:9f}
				R_t(E_{k_0})=\bigcup_{l=1}^{n_t}\bigcup_{i=1}^{m_t}\tilde{h}_t^{l,i}\big(\overline{E_{k_0}\bigcap H_t^i}\big)\bigcup \Bigg(\bigcup_{i=1}^{s_t}\tilde{h}_t^{0,i}\big(\overline{J_t^i}\big)\Bigg).
		\end{align}}
		In fact, by using a method similar to that for Claim 1, for any $t\in\Sigma$, we have
		\begin{align}\label{eq:9a}
			R_t(E_{k_0})=\bigcup_{l=1}^{n_t}\bigcup_{i=1}^{m_t}\overline{h_t^{l,i}(E_{k_0}\bigcap H_t^i)}\bigcup\Big(\bigcup_{i=1}^{s_t}\overline{h_t^{0,i}(J_t^i)}\Big).
		\end{align}
		 As in the proof for Claim 3, for any $t\in\Sigma,i\in\Lambda_t$, and $l\in\Pi_t$, we have
		\begin{align}\label{eq:9b}
			\overline{h_t^{l,i}(E_{k_0}\bigcap H_t^i)}\subseteq\tilde{h}_t^{l,i}(\overline{E_{k_0}\bigcap H_t^i}).
		\end{align}
		By Claim 3, for any $x\in(E_{k_0}\bigcap H_t^i)\subseteq H_t^i$, we have
		$$\tilde{h}_t^{l,i}(x)=h_t^{l,i}(x)\subseteq h_t^{l,i}(E_{k_0}\bigcap H_t^i)\subseteq\overline{h_t^{l,i}(E_{k_0}\bigcap H_t^i)}.$$
		If $x\in\overline{E_{k_0}\bigcap H_t^i}\backslash(E_{k_0}\bigcap H_t^i)$, then $x\in\overline{H_t^i}\backslash H_t^i$. We know that there exists a sequence $\{x_n\}\subseteq(E_{k_0}\bigcap H_t^i)$, converging to $x$, such that
		\begin{align*}
			\tilde{h}_t^{l,i}(x)=\lim_{n\rightarrow\infty}h_t^{l,i}(x_n)\subseteq h_t^{l,i}(E_{k_0}\bigcap H_t^i).
		\end{align*}
		Hence for any $t\in\Sigma,i\in\Lambda_t$, and $l\in\Pi_t$,
		\begin{align}\label{eq:9c}
			\tilde{h}_t^{l,i}(\overline{E_{k_0}\bigcap H_t^i})\subseteq\overline{h_t^{l,i}(E_{k_0}\bigcap H_t^i)}.
		\end{align}
		Combining \eqref{eq:9b} and \eqref{eq:9c}, for any  $t\in\Sigma,i\in\Lambda_t$, and $l\in\Pi_t$, we have
		\begin{align}\label{eq:9d}
			\tilde{h}_t^{l,i}(\overline{E_{k_0}\bigcap H_t^i})=\overline{h_t^{l,i}(E_{k_0}\bigcap H_t^i)}.
		\end{align}
		By using a similar argument,  for any $t\in \Sigma$ and $i\in\Delta_t$, we have
		\begin{align}\label{eq:9e}
			\tilde{h}_t^{0,i}(\overline{J_t^i})=\overline{h_t^{0,i}(J_t^i)}.
		\end{align}
		Combining \eqref{eq:9a}, \eqref{eq:9d}, and \eqref{eq:9e} proves \eqref{eq:9f}.

		 Next, fix $t\in\Sigma$. By (b)(ii) and Claim 2, we can rename the nonempty elements in  $\{\underline{W}_t^{l,i}\}_{t=1,s=1,i=1}^{N,n_t+1,m_t}$ and $\{W_t^{0,i}\}_{t=1,i=1}^{N,s_t}$ as $W_t^{s,i}$,  where $s$ and $i$ satisfy the following conditions.
		\begin{enumerate}
			\item[(1)] For any $s\in\Psi_t^i$ and $i\in\Lambda_t$, $W_t^{s,i}\subseteq\overline{E_{k_0}\bigcap H_t^i}$.
			\item[(2)] For any  $s\in\{p_t^i+1,\ldots,p_t^i+h_t^i\}$ and $i\in\Delta_t$,  $W_t^{s,i}\subseteq\overline{J_t^i}$.
			\item[(3)] $\{W_t^{s,i}\}_{t=1,s=p_t^i+1,i=1}^{N,p_t^i+h_t^i,s_t}$ is invariant under $g_t^{0,s,i}:=\tilde{h}_t^{0,i}|_{W_t^{s,i}}$, and $\{W_t^{s,i}\}_{t=1,s=1,i=1}^{N,p_t^i,m_t}$ is invariant under $g_t^{l,s,i}:=\tilde{h}_t^{l,i}|_{W_t^{s,i}}$, where $l\in \Pi_t$.
		\end{enumerate}
		Note that
		$$\overline{E_{k_0}\bigcap H_t^i}=\bigcup_{s=1}^{p_t^i}W_t^{s,i} \qquad\text{and}\qquad \overline{J_t^i}=\bigcup_{s=p_t^i+1}^{p_t^i+h_t^i}\tilde{h}_t^{0,i}(W_t^{s,i}).$$
		 By \eqref{eq:9f}, for any $t\in\Sigma$, we have
		\begin{align*}
			R_t(E_{k_0})&=\bigcup_{l=1}^{n_t}\bigcup_{i=1}^{m_t}\tilde{h}_t^{l,i}\big(\overline{E_{k_0}\bigcap H_t^i}\big)\bigcup \Big(\bigcup_{i=1}^{s_t}\tilde{h}_t^{0,i}\big(\overline{J_t^i}\big)\Big)\\
			&=\Bigg(\bigcup_{l=1}^{n_t}\bigcup_{i=1}^{m_t}\bigcup_{s=1}^{p_t^i}\tilde{h}_t^{l,i}\Big(W_t^{s,i}\Big)\Bigg)\bigcup\Bigg(\bigcup_{i=1}^{s_t}\bigcup_{s=p_t^i+1}^{p_t^i+h_t^i}\tilde{h}_t^{0,i}\Big(W_t^{s,i}\Big)\Bigg)\\
			&=\Bigg(\bigcup_{l=1}^{n_t}\bigcup_{i=1}^{m_t}\bigcup_{s=1}^{p_t^i}g_t^{l,s,i}(W_t^{s,i})\Bigg)\bigcup\Bigg(\bigcup_{i=1}^{s_t}\bigcup_{s=p_t^i+1}^{p_t^i+h_t^i}g_t^{0,s,i}(W_t^{s,i})\Bigg).
		\end{align*}
		By the definitions of $g_t^{l,s,i}$ and $W_t^{s,i}$, for fixed $t$, $l$, $s$, $i$, and $g_u:=g_t^{l,s,i}$, $W_j:=W_t^{s,i}$, we have
		$$g_u:W_j\rightarrow W_k, \quad\text{for}\,\, j,k\in\{1,\ldots,p\},$$
		where $W_k:=g_t^{l,s,i}(W_t^{s,i})$ and $u\in\{1,\ldots,L\}$.  Let $e$ be the edge corresponding to $g_u$, and let $E^{k,j}$ be the set of all edges from $k$ to $j$.  Let
		$$f_e:=g_u,\quad e\in E^{k,j}.$$
	    Then for any $e\in E$, $f_e$ is contractive.
		Hence $G:=(V,E)$ is a GIFS of contractions $\{f_e\}_{e\in E}$, where $V:=\{1,\ldots,p\}$ and $E=\bigcup_{k,j=1}^pE^{k,j}$ is the set of all edges.
		
		Finally, we show that  $G$ is a GIFS associated to $\{R_t\}_{t=1}^N$ on $E_{k_0}$.
		By \eqref{eq:9f} and the definition of $g_u$, we have
		\begin{align}\label{eq:10b}
			\bigcup_{t=1}^NR_t(E_{k_0})
			=\Bigg(\bigcup_{u=1}^{L'}\bigcup_{j=1}^{p'}g_u(W_j)\Bigg)\bigcup\Bigg(\bigcup_{u=L'+1}^{L}\bigcup_{j=p'+1}^pg_u(W_j)\Bigg)=\bigcup_{k,j=1}^p\bigcup_{e\in E^{k,j}}f_e(W_j).
		\end{align}
		Now we show by induction that for $q\geq1$,
		\begin{align}\label{eq:10c}
			\bigcup_{t\in\Sigma^q}R_t(E_{k_0})=\bigcup_{k,j=1}^p\bigcup_{\mathbf{e}\in E_q^{k,j}}f_\mathbf{e}(W_j).
		\end{align}
		For $q=1$,  \eqref{eq:10b} implies that \eqref{eq:10c} holds. Assume that \eqref{eq:10c} holds when $q=q_0$. For $q=q_0+1$,
		\begin{align*}
			&\bigcup_{t\in\Sigma^{q_0+1}}R_t(E_{k_0})=\bigcup_{t_1=1}^NR_{t_1}\Bigg(\bigcup_{k,j=1}^p\bigcup_{e\in E_{q_0}^{k,j}}f_e(W_j)\Bigg)\\
			=&\Bigg(\bigcup_{u=1}^{L'}\bigcup_{i=1}^{m'}g_u\Bigg(\bigcup_{j=1}^p\bigcup_{\mathbf{e}\in E_{q_0}^{k,j}}f_\mathbf{e}(W_j)\Bigg)\Bigg)\bigcup\Bigg(\bigcup_{u=L'+1}^{L}\bigcup_{i=m'+1}^pg_u\Bigg(\bigcup_{j=1}^p\bigcup_{\mathbf{e}\in E_{q_0}^{k,j}}f_\mathbf{e}(W_j)\Bigg)\Bigg)\\
			=&\bigcup_{k,s=1}^p\bigcup_{\mathbf{e}\in E^{s,k}}f_\mathbf{e}\Bigg(\bigcup_{j=1}^p\bigcup_{\mathbf{e}\in E_{q_0}^{k,j}}f_\mathbf{e}(W_j)\Bigg)\qquad (\text{by \eqref{eq:10b}})\nonumber\\
			=&\bigcup_{k,j=1}^p\bigcup_{\mathbf{e}\in E_{q_0+1}^{k,j}}f_\mathbf{e}(W_j).
		\end{align*}
		Hence \eqref{eq:10c} holds.
		By definition, $G$ is a GIFS associated to $\{R_t\}_{t=1}^N$ on $E_{k_0}$, where $k_0\geq1$. This proves Case 2. Combining Cases 1 and 2 completes the proof.
	\end{proof}
We give a counterexample to show that Theorem \ref{thm:exi} fails if condition (a) is not satisfied. 
	\begin{exam}\label{thm(a)}
		Let $E_0:=[0,1]$, and let  $\{R_t\}_{t=1}^2$  be an IRS on $E_0$ defined as
		\begin{align*}
			R_1(\boldsymbol{x}) &:= (1/3)\,\boldsymbol{x}+2/3;\\
			R_2(\boldsymbol{x}) &:=
			\begin{cases}
				(1/3)\boldsymbol{x},  &   0\leq\boldsymbol{x}\leq1, \\
				\{1/2^p:p\in\mathbb{N}\}, &   \boldsymbol{x}=0.
			\end{cases}
		\end{align*}
		Then $\#\{R_2(0)\}=\infty.$ Thus, there does not exist any (finite) GIFS associated to $\{R_t\}_{t=1}^2$.
	\end{exam}
	
	Examples \ref{notcon} and \ref{decomposed} show that the conditions (b)(i) in Theorem \ref{thm:exi} need not be satisfied.
	\begin{exam}\label{notcon}
		Let $E_0:=[0,1]$. Let $\{R_t\}_{t=1}^2$ be an IRS on $E_0$ defined as
		\begin{align*}
			&R_1(\boldsymbol{x}): =
			\begin{cases}
				\{\boldsymbol{x},\boldsymbol{x}+1/4\},  & \boldsymbol{x}\in [0,1/2], \\
				(1/2)\boldsymbol{x}+1/4, & \boldsymbol{x}\in (1/2];\\
			\end{cases}\\
			&R_2(\boldsymbol{x}): =\boldsymbol{x}/2.
		\end{align*}
		Then $H_1=[0,1/2]$, and thus for any $\boldsymbol{x}\in H_1$,
		$$h_1^{1,1}(\boldsymbol{x})=\boldsymbol{x}\qquad\text{and}\qquad h_1^{2,1}(\boldsymbol{x})=\boldsymbol{x}+1/4.$$
	Therefore $h_1^{1,1}$ and $h_1^{2,1}$ are not contractions.
	\end{exam}

	\begin{exam}\label{decomposed}
		Let $E_0:=[0,1]$. For any $i\in\mathbb{N}_+$, let $F^i:=[1-(1/2^{2i-2}),1-(1/2^{2i-1})]$, and define
		$$f^{1,i}(\boldsymbol{x}):=\frac{1}{2}\boldsymbol{x}+\frac{2^{2i-1}-2}{2^{2i}}\qquad\text{and}\qquad f^{2,i}(\boldsymbol{x}):=\frac{1}{3}\boldsymbol{x}+\frac{2^{2i+1}-8}{3\cdot2^{2i}}\qquad\text{for}\quad\boldsymbol{x}\in F^i.$$
		Note that
		\begin{align*}
			&f^{1,i}(F^i)=\Big[\frac{1}{2}+\frac{2^{2i-1}-4}{2^{2i}},\,\frac{1}{2}+\frac{2^{2i-1}-3}{2^{2i}}\Big]\subseteq F^i\quad\text{and}\\ &f^{2,i}(F^i)=\Big[\frac{1}{3}+\frac{2^{2i+1}-12}{3\cdot2^{2i}},\,\frac{1}{3}+\frac{2^{2i+1}-10}{3\cdot2^{2i}}\Big]\subseteq F^i,
		\end{align*}
		where $i\in \mathbb{N}_+$ (see Figure \ref{fig.b(i)}). Let $R_1$ be an IRS on $E_0$ defined as
		\begin{align*}
			R_1(\boldsymbol{x}): =
			\begin{cases}
				\{f^{1,i}(\boldsymbol{x}),f^{2,i}(\boldsymbol{x})\},  & \boldsymbol{x}\in F^i, \,\,i\in\mathbb{N}_+, \\
				\boldsymbol{x}/4, & \boldsymbol{x}\in E_0\backslash (\bigcup_iF^i\bigcup\{1\}),\\
				\{0,1\}, & \boldsymbol{x}=1.
			\end{cases}
		\end{align*}
		Then $H_1=(\bigcup_{i=1}F^i)\bigcup\{1\}$, and $r_1:=R_1|_{H_1}$ cannot be decomposed as a finite family of contractions.
	\end{exam}
	\begin{figure}[htbp]
		\centering
		\begin{tikzpicture}[scale=0.9]
			\draw[red,ultra thick](0,0)--(5,0);
			\draw[red,ultra thick](7.5,0)--(8.75,0);
			\draw[red,ultra thick](9.375,0)--(9.6875,0);
			\draw[red,ultra thick](9.84375,0)--(9.9921875,0);
			\draw[red,ultra thick](9.9609375,0)--(9.98046875,0);
			\draw[red,ultra thick](9.990234375,0)--(9.995117188,0);
			\draw[red,ultra thick](9.997558594,0)--(9.998779297,0);
			
			\draw[blue,ultra thick](5,0)--(7.5,0);
			\draw[blue,ultra thick](8.75,0)--(9.375,0);
			\draw[blue,ultra thick](9.6875,0)--(9.84375,0);
			\draw[blue,ultra thick](9.9921875,0)--(9.9609375,0);
			\draw[blue,ultra thick](9.98046875,0)--(9.990234375,0);
			\draw[blue,ultra thick](9.995117188,0)--(9.997558594,0);
			
			\draw[red,ultra thick](0,-1.5)--(2.5,-1.5);
			\draw[red,ultra thick](0,-2.1)--(1.67,-2.1);
			\draw[red,ultra thick](7.5,-1.5)--(8.125,-1.5);
			\draw[red,ultra thick](7.5,-2.1)--(7.9166666667,-2.1);
			\draw[red,ultra thick](9.375,-1.5)--(9.53125,-1.5);
			\draw[red,ultra thick](9.375,-2.1)--(9.47916666667,-2.1);
			\draw[red,ultra thick](10,-1.5)--(10.05,-1.5);
			
			\draw[blue,ultra thick](1.25,-2.75)--(1.875,-2.75);
			\draw[blue,ultra thick](2.1875,-2.75)--(2.34375,-2.75);
			\draw[blue,ultra thick](2.421875,-2.75)--(2.4609375,-2.75);
			\draw[blue,ultra thick](2.498046875,-2.75)--(2.490234375,-2.75);
			\draw[blue,ultra thick](2.49117188,-2.75)--(2.497558594,-2.75);
			\draw[blue,ultra thick](2.498779297,-2.75)--(2.499389648,-2.75);

			\draw[blue](5,0)circle(.07);
			\draw[blue](7.5,0)circle(.07);
			\draw[blue](8.75,0)circle(.07);
			\draw[blue](9.375,0)circle(.07);
			\draw[blue](9.6875,0)circle(.07);
			\draw[blue](9.84375,0)circle(.07);
			
			\draw[blue](1.25,-2.75)circle(.07);
			\draw[blue](1.875,-2.75)circle(.07);
			\draw[blue](2.1875,-2.75)circle(.07);
			\draw[blue](2.34375,-2.75)circle(.07);
			\draw[fill=black](10,0.)circle(.05);
			\draw[fill=black](10,-1.5)circle(.03);
			\draw[fill=black](0.0,-1.5)circle(.03);

			\draw[black] (-1,0) node[right]{$E_0$};
			\draw[black] (-1,-1.8) node[right]{$E_1$};
			\draw[black] (2.5,0.3) node[right]{$F^1$};
			\draw[black] (7.9,0.3) node[right]{$F^2$};
			\draw[black] (9.3,0.3) node[right]{$F^3$};
			\draw[black] (0.8,-1.25) node[right][scale=0.8]{$f^{1,1}(F^1)$};
			\draw[black] (0.3,-1.85) node[right][scale=0.8]{$f^{2,1}(F^1)$};
			\draw[black] (1.1,-2.5) node[right][scale=0.75]{$R_1(E_0\backslash H_1)$};
			\draw[black] (-0.2,-0.25) node[right]{$0$};
			\draw[black] (9.8,-0.25) node[right]{$1$};	
		\end{tikzpicture}
		\caption{The sets $E_1=R_1(E_0)$ and $H_1=(\bigcup_{i=1}F^i)\bigcup\{1\}$ in Example \ref{decomposed}.}\label{fig.b(i)}
	\end{figure}
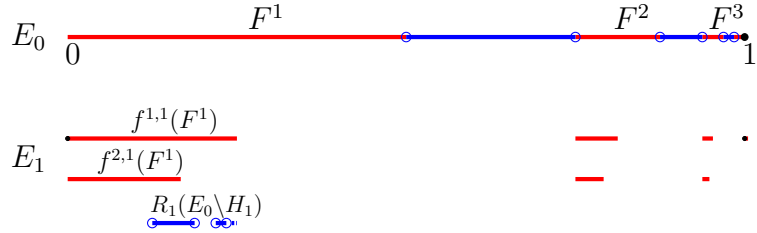
	\begin{proof} By the definition of $R_1$, we have $H_1=(\bigcup_{i=1}F^i)\bigcup\{1\}$. We prove the following claim.
		
		\noindent{\em Claim 1. For any $i,$ $j\in\mathbb{N}_+$, let $f:F^i\bigcup F^j\rightarrow f(F^i\bigcup F^j)$ be a function decomposed from $R_1$. Then $f$ is not contractive.}
		To prove this claim, we let $x_1:=1-(1/2^{2i-2})\in F^i$ and $x_2:=1-(1/2^{2j-2})\in F^j$. Then
		$$f(x_1)=1-\frac{1}{2^{2i-2}}\qquad\text{and}\qquad f(x_2)=1-\frac{1}{2^{2j-2}}.$$
		Note that
		$$d(f(x_1),f(x_2))=\Big|\frac{1}{2^{2i-2}}-\frac{1}{2^{2j-2}}\Big|=d(x_1,x_2).$$
		Thus $f$ is not a contraction. 
		Next, suppose that $r_1=R_1|_{H_1}$ can be decomposed as a finite family of contractions $\{h_1^{l,s}:H_1^s\to h_1^{l,s}(H_1^s)\}_{l=1,s=1}^{n,m}$. Then there would exist at least one $H_1^s$ such that
		\begin{align*}
			\bigcup_{i=q}^\infty F^i\subseteq H_{1}^s\,\,\text{ for some }\,\,\,q\geq 1,\qquad\text{and}\qquad h_1^{l,s}\Big|_{\big(\bigcup_{i=q}^\infty F^i\big)}\,\,\text{is contractive.}
		\end{align*}This contradicts Claim 1 and proves that $r_1$ cannot be decomposed as a finite family of contractions.
	\end{proof}
	
	The following example shows that condition (b)(ii) in Theorem \ref{thm:exi} need not be satisfied.
	\begin{exam}\label{notgood}
		Let $E_0:=[0,1]$. For any $i\in \mathbb{N}_+$, let
		\begin{align*}
			F^i:=\Big[1-\frac{1}{2^{i-1}},1-\frac{3}{5\cdot2^{i-1}}\Big],\quad
			I^i:=\Big(1-\frac{3}{5\cdot2^{i-1}},1-\frac{1}{2^{i}}\Big),
		\end{align*}
		and let
		$$f(\boldsymbol{x}):=\frac{5}{8}\boldsymbol{x}+\frac{3}{8}+\frac{3}{5\cdot2^{i+2}},\qquad\boldsymbol{x}\in F^i.$$
		Note that
		$$f(F^i)=\Big[1-\frac{11}{5\cdot2^{i+1}},1-\frac{3}{5\cdot2^{i}}\Big]\subseteq I^i\bigcup F^{i+1}$$
		(see Figure \ref{fig.b(ii)}). Let $R_1$ be an IRS on $E_0$ defined as
		\begin{align*}
			R_1(\boldsymbol{x}) &:=
			\begin{cases}
				\big\{f(\boldsymbol{x}), \boldsymbol{x}/20\big\},  &\boldsymbol{x}\in F^i,\,\,i\in \mathbb{N}_+, \\
				\boldsymbol{x}/20,& \boldsymbol{x}\in \bigcup_{i=1}^\infty I^i,\\
				\{1/20,1\}, & \boldsymbol{x}=1.
			\end{cases}
		\end{align*}
		Let $E_1:=R_1(E_0)$.
		Then there does not exist a good partition of $E_1$.
	\end{exam}
  	\begin{figure}[htbp]
	\centering
	\begin{tikzpicture}[scale=1]
		\draw[red,ultra thick](0,0.2)--(4,0.2);
		\draw[red,ultra thick](5,0.2)--(7,0.2);
		\draw[red,ultra thick](7.5,0.2)--(8.5,0.2);
		\draw[red,ultra thick](8.75,0.2)--(9.25,0.2);
		\draw[red,ultra thick](9.375,0.2)--(9.625,0.2);
		\draw[red,ultra thick](9.6875,0.2)--(9.8125,0.2);
		\draw[red,ultra thick](9.84375,0.2)--(9.90625,0.2);
		\draw[red,ultra thick](9.921875,0.2)--(9.953125,0.2);
		\draw[red,ultra thick](9.9609375,0.2)--(9.976525,0.2);
		\draw[red,ultra thick](9.9609375,0.2)--(9.976525,0.2);
		\draw[red,ultra thick](9.98046875,0.2)--(9.98828125,0.2);
		\draw[blue,ultra thick](4,0.2)--(5,0.2);
		\draw[blue,ultra thick](7,0.2)--(7.5,0.2);
		\draw[blue,ultra thick](8.5,0.2)--(8.75,0.2);
		\draw[blue,ultra thick](9.25,0.2)--(9.375,0.2);
		\draw[blue,ultra thick](9.625,0.2)--(9.6875,0.2);
		\draw[blue,ultra thick](9.8125,0.2)--(9.84375,0.2);
		\draw[blue,ultra thick](9.90625,0.2)--(9.921875,0.2);
		\draw[blue,ultra thick](9.976525,0.2)--(9.9609375,0.2);
		\draw[blue,ultra thick](9.976525,0.2)--(9.98046875,0.2);
		\draw[red,ultra thick](4.5,-1.5)--(7,-1.5);
		\draw[red,ultra thick](7.25,-1.5)--(8.5,-1.5);
		\draw[red,ultra thick](8.625,-1.5)--(9.25,-1.5);
		\draw[red,ultra thick](9.3125,-1.5)--(9.625,-1.5);
		\draw[red,ultra thick](9.65625,-1.5)--(9.8125,-1.5);
		\draw[red,ultra thick](9.828125,-1.5)--(9.90625,-1.5);
		\draw[red,ultra thick](10,-1.5)--(10.05,-1.5);
		
		\draw[magenta,ultra thick](0,-1.5)--(0.2,-1.5);
		\draw[magenta,ultra thick](0.25,-1.5)--(0.35,-1.5);
		\draw[magenta,ultra thick](0.375,-1.5)--(0.425,-1.5);
		\draw[magenta,ultra thick](0.4375,-1.5)--(0.4625,-1.5);
		\draw[magenta,ultra thick](0.46875,-1.5)--(0.48125,-1.5);
		\draw[magenta,ultra thick](0.484375,-1.5)--(0.490625,-1.5);
		\draw[magenta,ultra thick](0.4921875,-1.5)--(0.4953125,-1.5);
		\draw[blue,ultra thick](0.2,-1.5)--(0.25,-1.5);
		\draw[blue,ultra thick](0.35,-1.5)--(0.375,-1.5);
		\draw[blue,ultra thick](0.425,-1.5)--(0.4375,-1.5);
		\draw[blue,ultra thick](0.4625,-1.5)--(0.46875,-1.5);
		\draw[blue,ultra thick](0.46875,-1.5)--(0.484375,-1.5);
		\draw[blue,ultra thick](0.490625,-1.5)--(0.4921875,-1.5);
		
		\draw[blue](4,0.2)circle(.07);
		\draw[blue](5,0.2)circle(.07);
		\draw[blue](7,0.2)circle(.07);
		\draw[blue](7.5,0.2)circle(.07);
		\draw[blue](8.5,0.2)circle(.07);
		\draw[blue](8.75,0.2)circle(.07);
		\draw[blue](9.25,0.2)circle(.07);
		\draw[blue](9.375,0.2)circle(.07);
		\draw[fill=black](10,0.2)circle(.05);
		\draw[fill=black](10,-1.5)circle(.05);
		\draw[fill=black](0.5,-1.5)circle(.05);
		
		\draw[black] (-1,0.1) node[right]{$E_0$};
		\draw[black] (-1,-1.5) node[right]{$E_1$};
		\draw[black] (2,0.45) node[right]{$F^1$};
		\draw[black] (5.8,0.45) node[right]{$F^2$};
		\draw[black] (7.6,0.45) node[right]{$F^3$};
		\draw[black] (4.2,0.45) node[right]{$I^1$};
		\draw[black] (7,0.45) node[right]{$I^2$};
		\draw[black] (8.35,0.45) node[right]{$I^3$};
		\draw[black] (-0.2,-0.1) node[right]{$0$};
		\draw[black] (-0.2,-1.78) node[right][scale=0.85]{$0$};
		\draw[black] (9.8,-0.1) node[right]{$1$};
		\draw[black] (9.8,-1.8) node[right]{$1$};
		\draw[black] (0.2,-1.8) node[right][scale=0.8]{$1/20$};

		\draw(5,-1.35)--(5,-1.65);
		\draw(5.4,-1.35)--(5.4,-1.65);
		\draw(5.72,-1.35)--(5.72,-1.65);
		\draw(5.976,-1.35)--(5.976,-1.65);
		\draw(6.1808,-1.35)--(6.1808,-1.65);
		\draw(6.34464,-1.35)--(6.34464,-1.65);
		\draw(6.475712,-1.35)--(6.475712,-1.65);
		\draw(6.5805696,-1.35)--(6.5805696,-1.65);
		\draw(6.66445568,-1.35)--(6.66445568,-1.65);
		\draw(6.731564544,-1.35)--(6.731564544,-1.65);
		\draw(6.798673408,-1.35)--(6.798673408,-1.65);
		\draw(6.84162308096,-1.35)--(6.84162308096,-1.65);
		\draw(6.87598281933,-1.35)--(6.87598281933,-1.65);
		\draw(6.90347061002,-1.35)--(6.90347061002,-1.65);
		\draw(6.92546084248,-1.35)--(6.92546084248,-1.65);
		\draw(6.95,-1.35)--(6.95,-1.65);
		\draw(6.97,-1.35)--(6.97,-1.65);
		\draw(6.98,-1.35)--(6.98,-1.65);
		\draw(6.99,-1.35)--(6.99,-1.65);
		
		\draw(8.75,-1.35)--(8.75,-1.65);
		\draw(9.375,-1.35)--(9.375,-1.65);
		
		\draw[black] (4.8,-1.2) node[right][scale=0.7]{$d_1^1$};
		\draw[black] (5.2,-1.2) node[right][scale=0.6]{$d_2^1$};
		\draw[black] (5.5,-1.2) node[right][scale=0.6]{$d_3^1$};
		\draw[black] (7.25,-1.2) node[right][scale=0.6]{$d_1^2$};
		\draw[black] (7.45,-1.2) node[right][scale=0.6]{$d_2^2$};
		\draw[black] (8.55,-1.2) node[right][scale=0.6]{$d_1^3$};
		\draw[black] (9.15,-1.2) node[right][scale=0.6]{$d_1^4$};
		\draw[black] (5,-1.9) node[right][scale=0.8]{$h_1^{1,1}(F^1)$};
		\draw[black] (7.3,-1.9) node[right][scale=0.8]{$h_1^{1,1}(F^2)$};
		
		\draw(7.5,-1.35)--(7.5,-1.65);
		\draw(7.66,-1.35)--(7.66,-1.65);
		\draw(7.788,-1.35)--(7.788,-1.65);
		\draw(7.89,-1.35)--(7.89,-1.65);
		\draw(7.97,-1.35)--(7.97,-1.65);
		\draw(8.03,-1.35)--(8.03,-1.65);
		\draw(8.1,-1.35)--(8.1,-1.65);
		\draw(8.15,-1.35)--(8.15,-1.65);
		\draw(8.19,-1.35)--(8.19,-1.65);
		\draw(8.23,-1.35)--(8.23,-1.65);
		\draw(8.26,-1.35)--(8.26,-1.65);
		\draw(8.28,-1.35)--(8.28,-1.65);
		\draw(8.3,-1.35)--(8.3,-1.65);
		\draw(8.32,-1.35)--(8.32,-1.65);
		\draw(8.34,-1.35)--(8.34,-1.65);
		\draw(8.36,-1.35)--(8.36,-1.65);
		\draw(8.38,-1.35)--(8.38,-1.65);
		\draw(8.4,-1.35)--(8.4,-1.65);
		\draw(8.41,-1.35)--(8.41,-1.65);
		\draw(8.43,-1.35)--(8.43,-1.65);
		\draw(8.45,-1.35)--(8.45,-1.65);
		\draw(8.47,-1.35)--(8.47,-1.65);
		\draw(8.48,-1.35)--(8.48,-1.65);
		\draw(8.49,-1.35)--(8.49,-1.65);
		
	\end{tikzpicture}
	\caption{The sets $H_1=(\bigcup_{i=1}F^i)\bigcup\{1\}$ and  $E_0\backslash H_1=\bigcup_{i=1}I^i$ in Example \ref{notgood}.  $\{d_j^i\}_{i,j\in \mathbb{N}_+}$ represents the set of division points in $E_1$. Note that for all $i\in\mathbb{N}_+$, $\#\{d_j^i:j\in \mathbb{N}_+\}=\infty$.} \label{fig.b(ii)}
\end{figure}
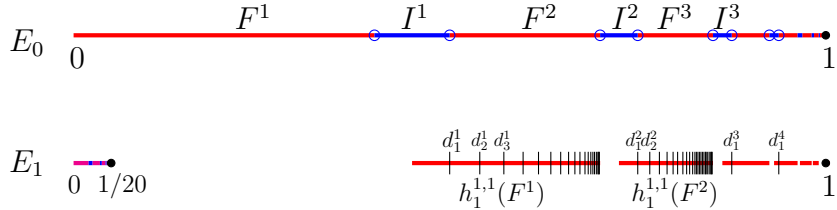
	\begin{proof}
		By the definition of $R_1$, we have
		$H_1^1=\bigcup_{i=1}^\infty F^i$, $H_1^2=\{1\},$ and $J_1^1=\bigcup_{i=1}^\infty I^i.$
		Thus, for any $i\in \mathbb{N}_+$,
		\begin{align*}
			&h_1^{1,1}(\boldsymbol{x})=f(\boldsymbol{x}), \,\,\boldsymbol{x}\in F^i;\qquad h_1^{2,1}(\boldsymbol{x})=\boldsymbol{x}/20,\,\, \boldsymbol{x}\in H_1^1;\\
			&h_1^{1,2}(\boldsymbol{x})=1/20,\,\,	\boldsymbol{x}=1;\qquad\,\,\,
			h_1^{2,2}(\boldsymbol{x})=1,\,\,	\boldsymbol{x}=1;\qquad\,\,\,\,
			h_1^{0,1}(\boldsymbol{x})=\boldsymbol{x}/20,\,\, \boldsymbol{x}\in J_1^1.
		\end{align*}
		Hence for $l$, $k\in\{1,2\}$, $h_1^{0,1}$ and $h_1^{l,k}$ are contractions. For any $i\in \mathbb{N}_+$, let
		\begin{align*}
			d_1^i:=1-\frac{1}{2^i}\quad\text{and}\quad \{d_j^i\}:=\big\{\big(h_1^{1,1}\big)^{-1}(d_{j-1}^{i+1})\big\}\bigcap h_1^{1,1}(F^i),\,j=2,3,\ldots
		\end{align*}
	be a family of division points in $h_1^{1,1}(F^i)$.
		Then for any $j\in\{2,3,\ldots\}$,
		\begin{align*}
			d_{j-1}^{i+1} \in h_1^{1,1}(F^{i+1})\qquad\text{and}\qquad F^{i+1}\subseteq h_1^{1,1}(F^i).
		\end{align*}
		Hence for any $i,j\in\mathbb{N}_+$,
		$\{d_j^i\}=\big\{\big(h_1^{1,1}\big)^{-1}(d_{j-1}^{i+1})\big\}\bigcap h_1^{1,1}(F^i)\neq\emptyset.$
		Next we show that for any $i\in \mathbb{N}_+$, $d_j^i$ is an increasing function of $j$. By the definition of $h_1^{1,1}$, we have $d_1^i<d_2^i$, where $i\in \mathbb{N}_+$, and $\big(h_1^{1,1}\big)^{-1}$ is an increasing function on $h_1^{1,1}(H_1^1)$. Assume that for any $i\in \mathbb{N}_+$ and $j=n$, we have $d_n^i<d_{n+1}^i$. Then
		\begin{align}\label{eq:increa}
			\big(h_1^{1,1}\big)^{-1}(d_n^i)<\big(h_1^{1,1}\big)^{-1}(d_{n+1}^i).
		\end{align}
		We let $j=n+1$ and let
		\begin{align*}
			\{d_{n+1}^i\}=\big\{\big(h_1^{1,1}\big)^{-1}(d_{n}^{i+1})\big\}\bigcap h_1^{1,1}(F^i)\quad\text{and}\quad \{d_{n+2}^i\} :=\big\{\big(h_1^{1,1}\big)^{-1}\big(d_{n+1}^{i+1})\big\}\bigcap h_1^{1,1}(F^i).
		\end{align*}
		Then by \eqref{eq:increa}, $d_{n+1}^i<d_{n+2}^i$.
		By induction, for any $i\in \mathbb{N}_+$, $d_j^i$ is an increasing function of $j$.
		Hence for any $i\in \mathbb{N}_+$, $\#\{d_j^i:j\in \mathbb{N}_+\}=\infty$. It follows that for any $i\in \mathbb{N}_+$,
		\begin{align*}
			h_1^{1,1}(F^i)=&\Big[1-\frac{11}{5\cdot2^{i+1}},d_1^i\Big]\bigcup[d_1^i,d_2^i]\bigcup[d_2^i,d_3^i]\bigcup\cdots\\
			=:&W_1^{1,i,1}\bigcup W_1^{1,i,2}\bigcup W_1^{1,i,3}\bigcup\cdots=\bigcup_{j=1}^\infty W_1^{1,i,j}.
		\end{align*}
		Note that for any $i\in \mathbb{N}_+$ and $j=2,3,\ldots$, we have
		$W_1^{1,i,1}\subseteq I^i$ and $W_1^{1,i,j}\subseteq F^{i+1}.$
		
		\noindent{\em Claim 1. For any $i\in \mathbb{N}_+$ and any $j\in \{2,3,\ldots\}$, let
			$$W_1^{i,t}:=W_1^{1,i,1}\bigcup W_1^{1,i,j} \qquad \text{and}\qquad g_1^{1,1,i,t}:=h_1^{1,1}|_{W_1^{i,t}}.$$
			Then $g_1^{1,1,i,t}$ is not contractive.} In fact, for any $x\in W_1^{1,i,1}$ and any $y\in W_1^{1,i,j}$, we have $d(x,y)\leq 1/4$, while $d(g_1^{1,1,i,t}(x),g_1^{1,1,i,t}(y))\geq 13/20$. This proves Claim 1.
		
		\noindent{\em Claim 2. Fix $i\in \mathbb{N}_+$. For any $p,q\in\mathbb{N}_+$ with $p<q$, let
			$$ W_1^{i,s}:=W_1^{1,i,p}\bigcup W_1^{1,i,q}\qquad\text{and}\qquad g_1^{1,1,i,s}:=h_1^{1,1}|_{W_1^{i,s}}.$$
			Then it is not possible for $g_1^{1,1,i,s}$ to be contractive and $W_1^{i,s}$ to be invariant under $g_1^{1,1,i,s}$.} In fact, suppose that $g_1^{1,1,i,s}$ is contractive and $W_1^{i,s}$ is invariant under $g_1^{1,1,i,s}$. By the definitions of $W_1^{1,i,p}$ and $W_1^{1,i,q}$, we have
		$$W_1^{1,i,p}\subseteq W_1^{1,i+1,p-1}\qquad\text{and}\qquad W_1^{1,i,q}\subseteq W_1^{1,i+1,q-1}.$$
		Hence $g_1^{1,1,i,s-1}$ is contractive and $W_1^{i+1,s-1}:=W_1^{1,i+1,p-1}\bigcup  W_1^{1,i+1,q-1}$ to be invariant under $g_1^{1,1,i,s-1}$. Since
		$$W_1^{1,i+1,p-1}\subseteq W_1^{1,i+2,p-2}\qquad\text{and}\qquad W_1^{1,i+1,q-1}\subseteq W_1^{1,i+2,q-2}.$$
		Hence $g_1^{1,1,i,s-2}$ is contractive and $W_1^{i+2,s-2}:=W_1^{1,i+2,p-2}\bigcup  W_1^{1,i+2,q-2}$ is invariant under $g_1^{1,1,i,s-2}$. Continue this process. We see that  $g_1^{1,1,i,s-p+1}$ is contractive and  $W_1^{i+p-1,s-p+1}:=W_1^{1,i+p-1,1}\bigcup  W_1^{1,i+p-1,q-p+1}$ is invariant under $g_1^{1,1,i,s-p+1}$. This contradicts Claim 1. This proves Claim 2. Let
		$$W_1^{2,1,1}:=h_1^{2,1}(H_1^1), \,\,W_1^{1,2,2}:=h_1^{1,2}(H_1^2),\,\, W_1^{2,2,2}:=h_1^{2,2}(H_1^2),\,\,\text{and}\,\,W_1^{0,1,1}:=\overline{h_1^{0,1}(J_1^1)}.$$
		Therefore we can extend $h_1^{0,1}$ from $J_1^1$ to $\overline{J_1^1}$, and let $\tilde{h}_1^{0,1}:\overline{J_1^1}\to W_1^{0,1,1}$ be defined as $\tilde{h}_1^{0,1}(\boldsymbol{x}):=\boldsymbol{x}/20$. Hence
		\begin{align*}
			E_1=&\tilde{h}_1^{0,1}(\overline{J_1^1})\bigcup\Big(\bigcup_{l=1}^2\bigcup_{k=1}^2 h_1^{l,k}(H_1^k)\Big)
			=W_1^{0,1,1}\bigcup\Big(\bigcup_{i,j\in \mathbb{N}_+} W_1^{1,i,j}\Big)\bigcup W_1^{2,1,1}\bigcup W_1^{1,2,2}\bigcup W_1^{2,2,2}.
		\end{align*}
		We assume that $\{W_1^{s,t}\}_{s=1,t=1}^{p,n}$ is a good partition of $E_1$ with respect to $$\{g_1^{l,s,t,k}\}_{l=0,k=1,s=1,t=1}^{2,2,p,n}, \,\,\text{where}\,\, g_1^{l,k,s,t}:=h_1^{l,k}|_{W_1^{s,t}}.$$ Then there exists at least one $W_1^{\alpha,\beta}$, where $\alpha\in\{1,\ldots,p\}$ and $\beta\in\{1,\ldots,n\}$, such that for any $i\in \mathbb{N}_+$,
		$$\bigcup_{j=q_1}^\infty W_1^{1,i,j}\subseteq W_1^{\alpha,\beta}\,\,\text{for some} \,\,\,q_1\geq 1.$$
		This contradicts Claim 2 and proves there does not exist a good partition of $E_1$.
	\end{proof}

	\begin{thm}\label{thm:exi2}
		Let $ X $ be a complete metric space and let $E_0\subseteq X $ be a nonempty compact set. Let $\{R_t\}_{t=1}^N$ be an IRS on $E_0$. For any $k\geq0$, let $E_{k+1}$  be defined as in \eqref{E_n}.  Assume $\{R_t\}_{t=1}^N$ satisfies the following conditions.
		\begin{enumerate}
			\item[(a)]There exists an integer $k_0\geq 1$ such that for any $t\in\Sigma$ and any $x\in E_{k_0-1}$,
			$$\#\{R_t(x)\}<\infty.$$
			\item[(b)] For any $t\in\Sigma$, if $H_t:=\{x\in E_{k_0-1}|2\leq\#\{R_t(x)\}<\infty\}\neq \emptyset$,  we require that the following conditions are satisfied.
			\begin{enumerate}
				\item[(i)] $r_t:=R_t|_{H_t}$  can be decomposed as a finite family of contractions  $\{h _t^{l,i}:H_t^i\to h _t^{l,i}(H_t^i)\}_{l=1,i=1}^{n_t,m_t}$,  where $\bigcup_{i=1}^{m_t}H_t^i=H_t$, and $\tilde{r}_t:=R_t|_{E_{k_0-1}\backslash H_t}$  can be decomposed as a finite family of contractions  $\{h _t^{0,i}:J_t^i\to h _t^{0,i}(J_t^i)\}_{i=1}^{s_t}$, where $\bigcup_{i=1}^{s_t}J_t^i=E_{k_0-1}\backslash H_t$.
				\item[(ii)]   There exist  $\alpha,\beta\in\Lambda_t$ or $\sigma,\tau\in\Delta_t$ such that for any $l\in \Pi_t$ and any $i\in\Lambda_t$,
				$$\overline{h _t^{l,i}(H_t^i)}\subseteq \overline{H_t^\alpha}\qquad \text{or}\qquad \overline{h _t^{l,i}(H_t^i)}\subseteq \overline{J_t^\sigma},$$
				and for any $i\in\Delta_t$,
				$$\overline{h _t^{0,i}(J_t^i)}\subseteq \overline{H_t^\beta}\qquad \text{or}\qquad \overline{h _t^{0,i}(J_t^i)}\subseteq \overline{J_t^\tau}.$$
			\end{enumerate}
		\end{enumerate}
		Then there exists a GIFS associated  to $\{R_t\}_{t=1}^N$ on $E_{k_0}$.
	\end{thm}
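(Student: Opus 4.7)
The strategy is to show that condition (b)(ii) of Theorem \ref{thm:exi2} produces a good partition on $E_{k_0}$ in the sense of Definition \ref{defi:good}(b), and then invoke Theorem \ref{thm:exi} directly. Since conditions (a) and (b)(i) here coincide with those in Theorem \ref{thm:exi}, the only work is to exhibit a good partition compatible with some finite family of contractions decomposed from $\{R_t\}_{t=1}^N$.

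First I would split into two cases exactly as in the proof of Theorem \ref{thm:exi}. If $H_t=\emptyset$ for all $t\in\Sigma$, then $\{R_t\}_{t=1}^N$ is itself an IFS and the conclusion is immediate. In the remaining case, for each $t\in\Sigma$ with $H_t\neq\emptyset$, I would first extend each contraction $h_t^{l,i}$ to $\tilde h_t^{l,i}:\overline{H_t^i}\to\underline{W}_t^{l,i}$ and each $h_t^{0,i}$ to $\tilde h_t^{0,i}:\overline{J_t^i}\to\underline{W}_t^{0,i}$ by the limiting construction of Claims 3 and 4 in the proof of Theorem \ref{thm:exi}; these extensions are again contractions with the same ratios.

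For each fixed $t\in\Sigma$, I would then define the candidate partition pieces for the index $t$ to be
$$W_t^{i,1}:=\overline{H_t^i}\cap E_{k_0}\quad(i\in\Lambda_t),\qquad W_t^{\sigma,2}:=\overline{J_t^\sigma}\cap E_{k_0}\quad(\sigma\in\Delta_t),$$
together with the branch restrictions $g_t^{l,i,1}:=\tilde h_t^{l,i}|_{W_t^{i,1}}$ and $g_t^{0,\sigma,2}:=\tilde h_t^{0,\sigma}|_{W_t^{\sigma,2}}$. Since $E_{k_0}\subseteq E_{k_0-1}=\bigcup_i H_t^i\cup\bigcup_\sigma J_t^\sigma$, these pieces cover $E_{k_0}$, and they form a legitimate collection of compact subsets as required. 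The main verification is invariance: for any $l\in\Pi_t$, $i\in\Lambda_t$, hypothesis (b)(ii) gives $\overline{h_t^{l,i}(H_t^i)}\subseteq\overline{H_t^\alpha}$ or $\overline{h_t^{l,i}(H_t^i)}\subseteq\overline{J_t^\sigma}$ for appropriate $\alpha,\sigma$; combined with the automatic inclusion $\tilde h_t^{l,i}(\overline{H_t^i})\subseteq R_t(E_{k_0-1})\subseteq E_{k_0}$, this forces $g_t^{l,i,1}(W_t^{i,1})$ into $W_t^{\alpha,1}$ or $W_t^{\sigma,2}$, which is exactly the invariance demanded in Definition \ref{defi:good}(b). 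The branches emanating from $J_t^\sigma$ are handled symmetrically using the second half of (b)(ii).

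Once the good partition is in hand, condition (b)(ii) of Theorem \ref{thm:exi} is satisfied with respect to the contractive decomposition $\{\tilde h_t^{l,i},\tilde h_t^{0,i}\}$, and the existence of a GIFS associated to $\{R_t\}_{t=1}^N$ on $E_{k_0}$ follows at once from Theorem \ref{thm:exi}. The only delicate point I anticipate is the potential mismatch between $\overline{H_t^i}\cap E_{k_0}$ and $\overline{H_t^i\cap E_{k_0}}$, and similarly for $J_t^\sigma$; this is harmless because the invariance requirement only asks the image to lie inside one of the declared pieces, and the containment supplied by (b)(ii) together with the $E_{k_0}$-inclusion already lands in $\overline{H_t^\alpha}\cap E_{k_0}$ or $\overline{J_t^\sigma}\cap E_{k_0}$ as needed.
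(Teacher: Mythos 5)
Your proposal is correct and follows essentially the same route as the paper: both proofs verify hypothesis (b)(ii) of Theorem \ref{thm:exi} by using the containments $\overline{h_t^{l,i}(H_t^i)}\subseteq\overline{H_t^\alpha}$ or $\overline{J_t^\sigma}$ (and their analogues for the $h_t^{0,i}$) to build a good partition of $E_{k_0}$ out of the extended branches $\tilde h_t^{l,i}$, $\tilde h_t^{0,i}$, and then invoke Theorem \ref{thm:exi}. The only difference is cosmetic: you take one coarse piece $\overline{H_t^i}\cap E_{k_0}$ (resp. $\overline{J_t^\sigma}\cap E_{k_0}$) per index, whereas the paper subdivides these into the closures of the individual branch images before renaming; both choices satisfy the invariance requirement of Definition \ref{defi:good}(b).
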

	\begin{proof}
		For any $t\in\Sigma$, let
		\begin{align*}
			& \underline{W}_t^{0,i}:=\overline{h_t^{0,i}(J_t^i)},\quad\text{where}\,\,i\in\Delta_t,\quad\text{and}\\
			&\underline{W}_t^{l,i}:=\overline{h_t^{l,i}(H_t^i)},\quad \underline{W}_t^{n_t+1,i}:=\overline{E_{k_0}\bigcap H_t^i},\quad\text{where}\,\,l\in\Pi_t \,\,\text{and}\,\, i\in\Lambda_t.
		\end{align*}
Fix $t\in\Sigma$. By (b)(ii),  we can rename the nonempty elements in  $\{\underline{W}_t^{l,i}\}_{t=1,s=1,i=1}^{N,n_t+1,m_t}$ and $\{W_t^{0,i}\}_{t=1,i=1}^{N,s_t}$ as $W_t^{s,i}$, where $s$ and $i$ satisfy the following conditions:
	\begin{enumerate}
		\item[(a)] for any $s\in\Psi_t^i$ and $i\in\Lambda_t$, $W_t^{s,i}\subseteq\overline{E_{k_0}\bigcap H_t^i}$;
		\item[(b)] for any  $s\in\{p_t^i+1,\ldots,p_t^i+h_t^i\}$ and $i\in\Delta_t$,  $W_t^{s,i}\subseteq\overline{J_t^i}$.
		\end{enumerate}
Note that
		$$\overline{E_{k_0}\bigcap H_t^i}=\bigcup_{s=1}^{p_t^i}W_t^{s,i} \qquad\text{and}\qquad \overline{J_t^i}=\bigcup_{s=p_t^i+1}^{p_t^i+h_t^i}W_t^{s,i}.$$
		For $t\in \Sigma$, $l\in\Pi_t$, $i\in\Lambda_t$, and $s\in \Psi_t^i$, let
		$g_t^{l,s,i}:=\tilde{h}_t^{l,i}|_{W_t^{s,i}}$. For  $i\in\Delta_t$ and $s\in\{p_t^i+1,\ldots, p_t^i+h_t^i\}$, let $g_t^{0,s,i}:=\tilde{h}_t^{0,i}|_{W_t^{s,i}}$. Here $\tilde{h}_t^{l,i}$ and $\tilde{h}_t^{0,i}$ are defined as in \eqref{eq:hl} and \eqref{eq:h0}, respectively. Then for any $W_t^{s,i}$, where $t\in\Sigma$, $i\in\Lambda_t$, and $s\in \Psi_t^i$, there exists some $W_t^{s_0,j}$,  where $s_0\in \Psi_t^j$ and $j\in\Lambda_t$, such that
		$$g_t^{l,s,i}(W_t^{s,i})\subseteq W_t^{s_0,j}.$$
		Similarly, for any $W_t^{s,i}$, where $t\in\Sigma$, $i\in\Delta_t$, and $s\in \{p_t^i+1,\ldots,p_t^i+h_t^i\}$, there exists some $W_t^{s_0,j}$, where $j\in\Delta_t$ and $s_0\in \Psi_t^j$, such that
		$$g_t^{0,s,i}(W_t^{s,i})\subseteq W_t^{s_0,j}.$$
		Hence $\{\{W_t^{s,i}\}_{t=1,s=1,i=1}^{N,p_t^i,m_t}, \{W_t^{s,i}\}_{t=1,s=p_t^i+1,i=1}^{N,p_t^i+h_t^i,s_t}\}$ is a good partition of $E_{k_0}$ with respect to   $\{\{g_t^{l,s,i}\}_{t=1,l=1,s=1,i=1}^{N,n_t,p_t^i,m_t},\{g_t^{0,s,i}\}_{t=1,s=p_t^i+1,i=1}^{N,p_t^i+h_t^i,s_t}\}$. By Theorem \ref{thm:exi},  there exists a GIFS associated  to $\{R_t\}_{t=1}^N$ on $E_{k_0}$.	
	\end{proof}

	\begin{defi}\label{simplified graph}
		Let $ X $ be a complete metric space, and let $G=(V,E)$ be a GIFS of contractions $\{f_e\}_{e\in E}$ on $ X $, where $V:=\{1,\ldots,p\}$ and $E$ is the set of all directed edges. Let $\{W_j\}_{j=1}^p$ be an invariant family  under $G$. We call $\widetilde{G}=(\widetilde{V},\widetilde{E})$ a \textit{simplified graph-directed iterated function system} associated to $G$, if $\widetilde{G}$ satisfies the following conditions.
		\begin{enumerate}
			\item[(a)] $\widetilde{E}\subseteq E$ and $\{\widetilde{W}_j\}_{j=1}^{\widetilde{p}} \subseteq \{W_j\}_{j=1}^{p}$, where $p\geq\widetilde{p}$.
			\item[(b)] Let $\{f_e\}_{e\in\widetilde{E}}$ be contractions associated to $\widetilde{G}$, and let $\{\widetilde{W}_j\}_{j=1}^{\widetilde{p}}$ be an invariant family under $\widetilde{G}$. Then for any $q\geqslant1$,
			\begin{align}\label{eq:min}
				\bigcup_{i,j=1}^p\bigcup_{\mathbf{e}\in E_q^{i,j}}f_\mathbf{e}(W_j)=\bigcup_{i,j=1}^{\widetilde{p}}\bigcup_{\mathbf{e}\in\widetilde{E}_q^{i,j}}f_\mathbf{e}(\widetilde{W}_j).
			\end{align}
		\end{enumerate}
	\end{defi}
	
	By Definition \ref{simplified graph}, we know that the attractor of $G$ is equal to the attractor of $\widetilde{G}$.  Note that the simplified GIFS is not unique.
	
	\begin{defi}\label{defi:min}
		We say that a simplified GIFS $\widehat{G}$ composed of $\big(\{\widehat{W}_j\}_{j=1}^{\widehat{p}},\{f_e\}_{e\in \widehat{E}}\big)$ is a \textit{minimal simplified graph-directed iterated function system} if among all simplified GIFSs 	$\widetilde{G}=(\widetilde{V},\widetilde{E})$ composed of $\big(\{\widetilde{W}_j\}_{j=1}^{\widetilde{p}},\{f_e\}_{e\in \widetilde{E}}\big)$, we have $\widehat{p}\leq \widetilde{p}$, and among all those simplified GIFSs with $ \widetilde{p}=\widehat{p}$, we have 	$\#\{f_e\}_{e\in \widehat{E}}\leq \#\{f_e\}_{e\in \widetilde{E}}.$ 
			\end{defi}

	\begin{prop}\label{prop:3.3}
		Assume that $\{R_t\}_{t=1}^N$ satisfies the conditions of Theorem \ref{thm:exi} and $G=(V,E)$ is a GIFS associated to $\{R_t\}_{t=1}^N$ guaranteed by Theorem \ref{thm:exi},  where $V=\{1,\ldots,p\}$ and  $G$ consists of contractions $\{f_e\}_{e\in E}$. Then there exists a minimal simplified GIFS $\widehat{G}=(\widehat{V},\widehat{E})$ associated to $G$.
	\end{prop}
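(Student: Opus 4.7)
The plan is to reduce the proposition to a finite minimization argument. Let $\mathcal{S}$ denote the collection of all simplified GIFSs associated to $G$, as in Definition~\ref{simplified graph}. First I would verify that $\mathcal{S}\neq\emptyset$ by showing that $G$ itself is a simplified GIFS of $G$: taking $\widetilde{V}=V$, $\widetilde{E}=E$, and $\{\widetilde{W}_j\}_{j=1}^{\widetilde{p}}=\{W_j\}_{j=1}^{p}$ trivially satisfies condition (a), while condition (b) (the identity \eqref{eq:min}) holds tautologically for every $q\geq 1$. Since $\{W_j\}_{j=1}^p$ is the invariant family furnished by Theorem~\ref{thm:exi}, this initial check goes through with no difficulty.

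Next I would observe that $\mathcal{S}$ is in fact \emph{finite}. Any element $\widetilde{G}=(\widetilde{V},\widetilde{E})\in\mathcal{S}$ is determined by a subset $\{\widetilde{W}_j\}_{j=1}^{\widetilde{p}}\subseteq\{W_j\}_{j=1}^p$ together with a subset $\widetilde{E}\subseteq E$, and both $V$ and $E$ are finite by construction of $G$ in Theorem~\ref{thm:exi}. Hence there are only finitely many candidates in $\mathcal{S}$.

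Then I would carry out the two-stage minimization. Let
\begin{equation*}
p_{\min}:=\min\bigl\{\widetilde{p}:\widetilde{G}\in\mathcal{S}\bigr\},
\end{equation*}
which exists since $\{\widetilde{p}:\widetilde{G}\in\mathcal{S}\}$ is a nonempty finite subset of $\mathbb{N}$. Let $\mathcal{S}_{\min}\subseteq\mathcal{S}$ be the (nonempty, finite) subcollection of simplified GIFSs with vertex cardinality equal to $p_{\min}$, and set
\begin{equation*}
e_{\min}:=\min\bigl\{\#\widetilde{E}:\widetilde{G}\in\mathcal{S}_{\min}\bigr\},
\end{equation*}
which again exists by finiteness. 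Any $\widehat{G}\in\mathcal{S}_{\min}$ attaining $\#\widehat{E}=e_{\min}$ then satisfies both clauses of Definition~\ref{defi:min} and is therefore a minimal simplified GIFS associated to $G$.

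The only potentially subtle point, and the one I would spend most care on, is confirming that $G$ itself satisfies Definition~\ref{simplified graph}(b) for the trivial choice $\widetilde{G}=G$; this amounts to unwinding the definitions and noting that the invariant family $\{W_j\}_{j=1}^{p}$ inherited from Theorem~\ref{thm:exi} remains invariant under $G$ when viewed as its own simplification. After that, the existence of $\widehat{G}$ is purely a well-ordering statement on a finite set, so no further obstruction arises.
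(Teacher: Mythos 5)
Your proof is correct, but it takes a genuinely different route from the paper's. The paper argues constructively: starting from the family $\{W_t^{s,i}\}$ built in the proof of Theorem \ref{thm:exi}, it runs an explicit removal procedure (delete any $W_t^{s,i}$ contained in another $W_t^{s_0,i}$, collapse duplicates), verifies by induction that the identity \eqref{eq:min} survives each deletion, and only then minimizes vertex count and edge count \emph{among the simplified GIFSs produced by that procedure}. You instead observe that the collection $\mathcal{S}$ of all simplified GIFSs is nonempty (it contains $G$ itself, since \eqref{eq:min} is a tautology for $\widetilde{G}=G$ and $\{W_j\}_{j=1}^p$ is invariant under $G$ by the good-partition construction in Theorem \ref{thm:exi}) and injects into the finite set of pairs $(\widetilde{E},\{\widetilde{W}_j\})$ with $\widetilde{E}\subseteq E$ and $\{\widetilde{W}_j\}\subseteq\{W_j\}$, so the two-stage minimum is attained by a finite well-ordering argument. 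Your version is shorter and, notably, more faithful to Definition \ref{defi:min}, which quantifies over \emph{all} simplified GIFSs rather than only those obtained by the paper's removal process; taken literally, the paper's final selection step only certifies minimality within its constructed subfamily, whereas yours certifies it over all of $\mathcal{S}$. What the paper's approach buys in exchange is an explicit algorithm for producing a smaller simplified GIFS, which is what is actually used in the worked examples (e.g., Examples \ref{exam:osc1} and \ref{exam:gftc1}); your argument is purely existential. Neither gap is fatal to your proof: for the existence statement of Proposition \ref{prop:3.3}, your argument is complete.
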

	\begin{proof}
	Let $\{W_t^{s,i}\}_{t=1,s=1,i=1}^{N,p_t^i,m_t}$ and $\{W_t^{s,i}\}_{t=1,s=p_t^i+1,i=1}^{N,p_t^i+h_t^i,s_t}$ be defined as in the proof of Theorem \ref{thm:exi}. For fixed $t$, $s$, $i$, we write $W_j:=W_t^{s,i}$. Then $$\bigcup_{t=1}^N\Bigg(\bigcup_{i=1}^{m_t}\bigcup_{s=1}^{p_t^i}W_t^{s,i}\bigcup\Big(\bigcup_{i=1}^{s_t}\bigcup_{s=p_t^i+1}^{p_t^i+h_t^i}W_t^{s,i}\Big)\Bigg)=\bigcup_{j=1}^pW_j.$$
	 Fix $t\in\Sigma$. For any $W_t^{s,i}$, where $s\in \Psi_t^i$ and $i\in\Lambda_t$, if there exists $s_0\in\Psi_t^i$ with $s_0\neq s$ such that $W_t^{s,i} \subseteq W_t^{s_0,i}$, then we remove $W_t^{s,i}$. In particular, if $W_t^{s,i}=W_t^{s_0,i}$, then we remove one of them. If there are multiple elements in $\{W_t^{s,i}\}_{t=1,s=1,i=1}^{N,p_t^i,m_t}$ that are equal, then we keep one of them and remove the others.  We rename the remaining $\{W_t^{s,i}\}_{t=1,s=1,i=1}^{N,p_t^i,m_t}$ as $\widetilde{W}_t^{s,i}$, where $s\in \{1,\ldots,\widetilde{p}_t^i\}$ and $i\in\{1,\ldots,\widetilde{m}_t\}$. We use a similar method to keep the elements in the set $\{W_t^{s,i}\}_{t=1,s=p_t^i+1,i=1}^{N,p_t^i+h_t^i,s_t}$, and thus we rename the remaining $\{W_t^{s,i}\}_{t=1,s=p_t^i+1,i=1}^{N,p_t^i+h_t^i,s_t}$ as $\widetilde{W}_t^{s,i}$, where $s\in \{\widetilde{p}_t^i+1,\ldots,\widetilde{p}_t^i+\widetilde{h}_t^i\}$ and $i\in\{1,\ldots,\widetilde{s}_t\}$.
		Note that $$\bigcup_{i=1}^{m_t}\bigcup_{s=1}^{p_t^i}W_t^{s,i}=\bigcup_{i=1}^{\widetilde{m}_t}\bigcup_{s=1}^{\widetilde{p}_t^i}\widetilde{W}_t^{s,i}\qquad\text{and}\qquad \bigcup_{i=1}^{s_t}\bigcup_{s=p_t^i+1}^{p_t^i+h_t^i}W_t^{s,i}=\bigcup_{i=1}^{\widetilde{s}_t}\bigcup_{s=\widetilde{p}_t+1}^{\widetilde{p}_t^i+\widetilde{h}_t^i}\widetilde{W}_t^{s,i}.$$
		For any $t\in\Sigma$, we note that the number of elements removed from  $\{W_t^{s,i}\}_{t=1,s=1,i=1}^{N,p_t^i,m_t}$ and $\{W_t^{s,i}\}_{t=1,s=p_t^i+1,i=1}^{N,p_t^i+h_t^i,s_t}$ is equal to the number of elements removed from $\{W_j\}_{j=1}^p$.
		 We rename the remaining $\{W_j\}_{j=1}^p$ as $\widetilde{W}_j$, where $j\in\{1,\ldots,\widetilde{p}\}$. Note that $$E_{k_0}=\bigcup_{j=1}^pW_j=\bigcup_{j=1}^{\widetilde{p}}\widetilde{W}_j=\bigcup_{t=1}^N\Bigg(\bigcup_{i=1}^{\widetilde{m}_t}\bigcup_{s=1}^{\widetilde{p}_t^i}\widetilde{W}_t^{s,i}\bigcup\Big(\bigcup_{i=1}^{\widetilde{s}_t}\bigcup_{s=\widetilde{p}_t^i+1}^{\widetilde{p}_t^i+\widetilde{h}_t^i}\widetilde{W}_t^{s,i}\Big)\Bigg).$$
		Let $\widetilde{G}=(\widetilde{V},\widetilde{E})$ be a  GIFS of contractions $\{f_e\}_{e\in\widetilde{E}}$, where $\widetilde{V}=\{1,\ldots,\widetilde{p}\}$, $\widetilde{E}\subseteq E$, and $\{\widetilde{W}_j\}_{j=1}^{\widetilde{p}}$ is an invariant family under $\widetilde{G}$. For any $e\in E\backslash\widetilde{E}$, we have
		$f_e(W_t^{s,i})\subseteq W_t^{s',i}.$
		Note that there exist $\widetilde{W}_t^{s,i}$ and $\widetilde{W}_t^{s',i}$ such that $W_t^{s,i}\subseteq  \widetilde{W}_t^{s,i}$ and $W_t^{s',i}\subseteq\widetilde{W}_t^{s',i}.$ Hence $e\in\widetilde{E}$. Therefore, we have $E\backslash\widetilde{E}\subseteq\widetilde{E}$ and
		\begin{align*}
			\bigcup_{i,j=1}^p\bigcup_{e\in E^{i,j}}f_e(W_j)=\bigcup_{i,j=1}^{\widetilde{p}}\bigcup_{e\in\widetilde{E}^{i,j}}f_e(\widetilde{W}_j).
		\end{align*}
		By induction, for all $q\geq1$, we have
		\begin{align*}
			\bigcup_{i,j=1}^p\bigcup_{\mathbf{e}\in E_q^{i,j}}f_\mathbf{e}(W_j)=\bigcup_{i,j=1}^{\widetilde{p}}\bigcup_{\mathbf{e}\in \widetilde{E}_q^{i,j}}f_\mathbf{e}(\widetilde{W}_j).
		\end{align*}
	Therefore $\widetilde{G}=(\widetilde{V},\widetilde{E})$ is a simplified GIFS associated to $G$.
		
Among all simplified GIFSs that have been constructed by the above process, we first select the subcollection with the smallest number of vertices. Then among members of this subcollection, we further select the subfamily with the smallest number of contractions. Members of this subfamily are minimal simplified GIFSs associated to $G$, denoted $\widehat{G}=(\widehat{V},\widehat{E})$. 
	\end{proof}

	\section{Hausdorff dimension of graph self-similar sets without overlaps}\label{S:GOSC}
	\setcounter{equation}{0}
	In this section, we give the definition of the graph open set condition (GOSC) and prove Theorems \ref{thm:main1}--\ref{thm:main2}. Moreover, we give some examples of IRSs that satisfy the conditions of Theorem \ref{thm:main2}, and compute the Hausdorff dimension of the corresponding attractors.
	\subsection{Graph open set condition}
	\begin{defi}\label{defi:4.2}
		Let $ X $ be a complete metric space. Let $\{f_t\}_{t=1}^m$ be an IFS of contractions on $ X $. We say that $\{f_t\}_{t=1}^m$ satisfies the \textit{open set condition} (OSC) if there exists a nonempty bounded open set $U$ on $ X $ such that 
		$$\bigcup_{t=1}^mf_t(U)\subseteq U\qquad \text{and}\qquad f_{t_1}(U)\bigcap f_{t_2}(U)=\emptyset\quad\text{for}\,\, t_1\neq t_2.$$
	\end{defi}
	\begin{defi}\label{defi:4.1}
		Let $ X $ be a complete metric space.  Let $G=(V,E)$ be a GIFS of contractions $\{f_e\}_{e\in E}$ on $ X $. We say that $G$ satisfies the \textit{graph open set condition} (GOSC) if there exists a family $\{U_i\}_{i=1}^m$ of nonempty bounded open sets on $ X $ such that for all $ i\in\{1,\ldots,m\}$,
		\begin{enumerate}
			\item[(a)] $\bigcup_{e\in E^{i,j}}f_e(U_j)\subseteq U_i$;
			\item[(b)] $f_e(U_{j_1})\bigcap f_{e'}(U_{j_2})=\emptyset$, for all distinct $e\in E^{i,j_1}$ and $e'\in E^{i,j_2}$.
		\end{enumerate}
	\end{defi}
	
\begin{defi}\label{defi:rtgosc}
	Let $ X $ be a complete metric space and let $\{R_t\}_{t=1}^N$ be an IRS on a nonempty compact subset of $ X $. Assume that there exists a GIFS $G$ associated to $\{R_t\}_{t=1}^N$ and assume that  $G$ consists of contractions. If $G$ satisfies (GOSC), then we say that {\em $\{R_t\}_{t=1}^N$ satisfies (GOSC) with respect to $G$}. If $G$ does not satisfies (GOSC), then we say that  $\{R_t\}_{t=1}^N$ as {\em overlaps with respect to $G$.}
	\end{defi}
	Let $G=(V,E)$ be a GIFS of contractions $\{f_e\}_{e\in E}$ on $ X $.  For any $e\in E^{i,j}$, let $\rho_e$ be the contraction ratio of $f_e$.  Recall that an {\em incidence matrix} $A_\alpha$ associated with $G$  is an $m\times m$ matrix defined by
	\begin{align}\label{eq:matrix}
		A_\alpha= [\rho_e^\alpha]_{m\times m},
	\end{align}
	where for $i,j\in V$ and $e\notin E^{i,j}$, $\rho_e=0$.

	\begin{proof}[Proof of Theorem \ref{thm:main1}]
		By Proposition \ref{prop:3.1}, we know that $G$ and $\{R_t\}_{t=1}^N$ have the same attractor. If $G$ satisfies (GOSC), then $G$ satisfies (GFTC). The proof follows by using the results of \cite[Theorem 1.6]{Ngai-Xu_2023}; we omit the details.
	\end{proof}
	
	\begin{lem}\label{lem:4.1}
		Let $M$ be a complete $n$-dimensional smooth orientable Riemannian manifold with non-negative Ricci curvature. Let $K_M$ be the sectional curvature of $M$ and $K_M\leq b^2$. Let $\{V_i\}$ be a collection of disjoint open subsets of $M$ such that each $V_i$ contains a ball $B^M(p_1,a_1r)$ and is contained in a ball of radius $B^M(p_2,a_2r)$. Then any ball $B^M(p,r)$ intersects at most $C(n)(r+2a_2r)^n/(C(n,b,a_1r))$ of the $\overline{V_i}$, where $C(n)=\pi^{n/2}/\Gamma(1+n/2)$ is the volume of the unit ball in $\R^n$.
	\end{lem}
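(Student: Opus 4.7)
The plan is a classical volume-packing argument that combines two Riemannian comparison theorems. First, I would reduce the counting problem to a containment statement. If $\overline{V_i}$ meets $B^M(p,r)$, pick $q \in \overline{V_i} \cap B^M(p,r)$. Since $V_i \subseteq B^M(p_2^{(i)}, a_2 r)$ has diameter at most $2a_2 r$, the triangle inequality shows that every point of $\overline{V_i}$ is at distance at most $r + 2a_2 r$ from $p$; hence all such $\overline{V_i}$ lie inside the enlarged ball $B^M(p, r + 2a_2 r)$. Since the $V_i$ are pairwise disjoint and so are the inscribed balls $B^M(p_1^{(i)}, a_1 r) \subseteq V_i$, I may then compare the total Riemannian volume of these inscribed balls with the volume of $B^M(p, r + 2a_2 r)$.

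For the upper bound on the enclosing volume, I would invoke the Bishop--Gromov volume comparison theorem: since $M$ has non-negative Ricci curvature,
\begin{equation*}
\mathrm{vol}\bigl(B^M(p, r + 2a_2 r)\bigr) \leq C(n)(r + 2a_2 r)^n,
\end{equation*}
where $C(n)$ is the volume of the Euclidean unit ball. For the lower bound on the volume of each small inscribed ball, I would use G\"unther's inequality (a consequence of the Rauch comparison theorem), which applies precisely because the sectional curvature satisfies $K_M \leq b^2$: within the conjugate radius, balls in $M$ are no smaller than balls of the same radius in the simply-connected space form of constant curvature $b^2$. Writing $C(n,b,a_1 r)$ for this model volume yields
\begin{equation*}
\mathrm{vol}\bigl(B^M(p_1^{(i)}, a_1 r)\bigr) \geq C(n, b, a_1 r).
\end{equation*}

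Combining these two estimates with the disjointness of the $V_i$, if $N$ is the number of $\overline{V_i}$ meeting $B^M(p,r)$, then
\begin{equation*}
N \cdot C(n, b, a_1 r) \leq \sum_{i} \mathrm{vol}(V_i) \leq \mathrm{vol}\bigl(B^M(p, r + 2a_2 r)\bigr) \leq C(n)(r + 2a_2 r)^n,
\end{equation*}
which yields exactly the bound stated in the lemma.

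The main technical point to check is that G\"unther's inequality is applied in its admissible range, namely that $a_1 r$ does not exceed $\pi/b$ (the conjugate radius guaranteed by $K_M \leq b^2$). If $a_1 r$ is too large, one replaces $a_1 r$ by $\min\{a_1 r, \pi/b\}$ in the lower bound, which only changes the constant $C(n,b,a_1 r)$; otherwise the argument is a routine volume count. Everything else (disjointness, Bishop--Gromov, and the elementary triangle inequality step) is standard.
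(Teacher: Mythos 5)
Your proposal is correct and follows essentially the same volume-packing argument as the paper: triangle-inequality containment in the enlarged ball $B^M(p,r+2a_2r)$, disjointness of the inscribed balls, Bishop--Gromov for the upper volume bound, and the sectional-curvature hypothesis $K_M\leq b^2$ for the lower bound $\mathrm{Vol}_M(B^M(p_1,a_1r))\geq C(n,b,a_1r)$. If anything, you are more careful than the paper, which does not name G\"unther's inequality or address the conjugate-radius restriction on its applicability.
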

	\begin{proof}
	Let $B^M(p,r)\bigcap\overline{V}_i\neq\emptyset$. Then $V_i$ is contained in a ball concentric with $B^M(p,r+2a_2r)$. Let $q$ be the number of $V_i$ such that $B^M(p,r)\bigcap\overline{V}_i\neq\emptyset$. Then summing volumes, we have
		$$q { \rm Vol}_M(B^M(p_1,a_1r))\leq\sum_{B^M(p,r)\bigcap\overline{V}_i\neq\emptyset}{\rm Vol}_M(\overline{V}_i)\leq{\rm Vol}_M(B^M(p,r+2a_2r)).$$
		By the Bishop-Gromov inequality (see, e.g., \cite{Bishop-Crittenden_1964}), we have
		$${\rm Vol}_M(B^M(p,r+2a_2r))\leq C(n)(r+2a_2r)^n.$$
		Since $K_M\leq b^2$, we have ${ \rm Vol}_M(B^M(p_1,a_1r))\geq C(n,b,a_1r).$
		This completes the proof.
	\end{proof}

	\begin{proof}[Proof of Theorem \ref{thm:main2}]
		Combining Proposition \ref{prop:3.1} and Definition \ref{simplified graph}, we know that $G$ and $\widehat{G}$ have the same attractor. If $\widehat{G}$ satisfies (GOSC), then $\widehat{G}$ satisfies (GFTC). By using the results of \cite[Theorem 1.6]{Ngai-Xu_2023}, we can prove (a). As $M$ is locally Euclidean, the sectional curvatures and the Ricci curvatures  of $M$ are everywhere zero.
		By using the results of Lemma \ref{lem:4.1}, and a similar method as in \cite{Falconer_2003},  we can prove (b); we omit the details.
	\end{proof}
	{
		\subsection{Examples}
		In this subsection, we provide three examples of IRSs satisfying Theorem \ref{thm:main2}, and compute the Hausdorff dimension of the associated attractors.
		
		\begin{exam}\label{exam:osc1}
			Let $\mathcal{C}^2:=\mathbb{S}^1\times\R^1=\big\{(\cos\theta,\sin\theta,z):\theta\in[-\pi,\pi],z\in[0,2\pi]\big\}$ be a cylindrical surface.
			Let $E_0:=\mathcal{C}^2$. For $r\in [0,\pi/2)$, let $\boldsymbol{x}:=(\cos\theta,\sin\theta,z)\in E_0$, $H:=\{(-1,0,z):z\in[0,2\pi]\}$, and $\{R_t\}_{t=1}^3$ be an IRS on $E_0$ defined as
			\begin{align*}
				&R_1(\boldsymbol{x}):=\left\{
				\begin{aligned}
					&(-\cos (\theta/2),-\sin (\theta/2),z/2+\pi/2),\quad \qquad \,\,&&\boldsymbol{x}\in E_0\backslash H,\\
					&\{(0,-1,z/2+\pi/2), (0,1,z/2+\pi/2)\}\quad &&\boldsymbol{x}\in H;\\
				\end{aligned}	
				\right.\\
				&R_2(\boldsymbol{x}):=\left\{
				\begin{aligned}
					&(\cos (\theta/2),\sin (\theta/2),z/2+\pi-r),\quad  &&\boldsymbol{x}\in E_0\backslash H,\\
					&\{(0,1,z/2+\pi-r),(0,-1,z/2+\pi-r) \}\quad &&\boldsymbol{x}\in H;\\
				\end{aligned}	
				\right.\\
				&R_3(\boldsymbol{x}):=\left\{
				\begin{aligned}
					&(\cos (\theta/2),\sin (\theta/2),z/2+r),\qquad\qquad \qquad\,\, &&\boldsymbol{x}\in E_0\backslash H,\\
					&\{(0,1,z/2+r),(0,-1,z/2+r)\} \qquad\qquad &&\boldsymbol{x}\in H.
				\end{aligned}	
				\right.
			\end{align*}
			Let $K$ be the associated attractor (see Figure \ref{fig:osc1}).  Then
			$$\dim_H(K)=\frac{\log3}{\log2}= 1.58496\ldots.$$
		\end{exam}
		
		\begin{figure}[H]
			\centering
			\mbox{\subfigure[]
			{	\includegraphics[scale=0.34]{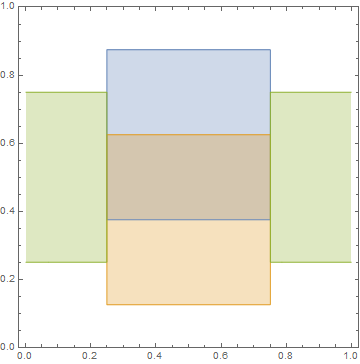}}
		}\quad
		\mbox{\subfigure[]
			{	\includegraphics[scale=0.34]{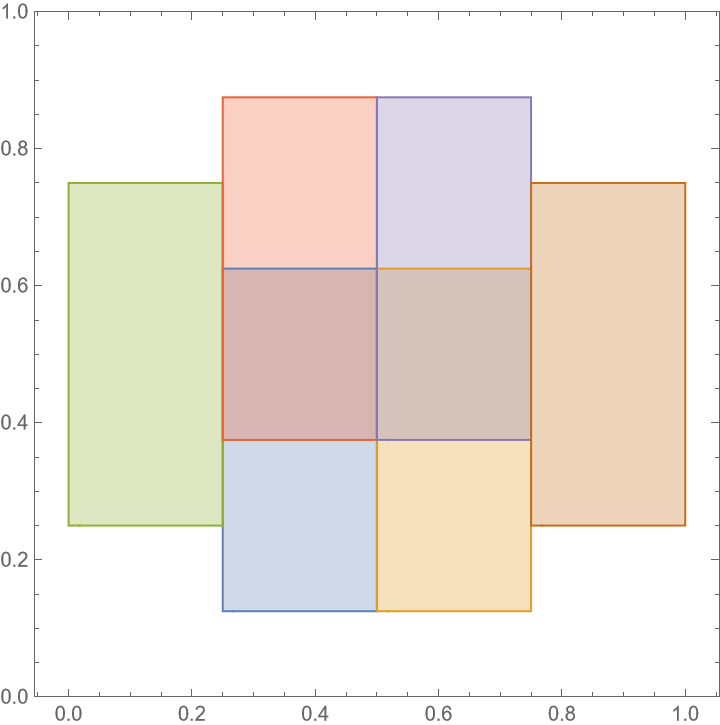}}
		}	\quad
	\mbox{\subfigure[]
	{	\includegraphics[scale=0.34]{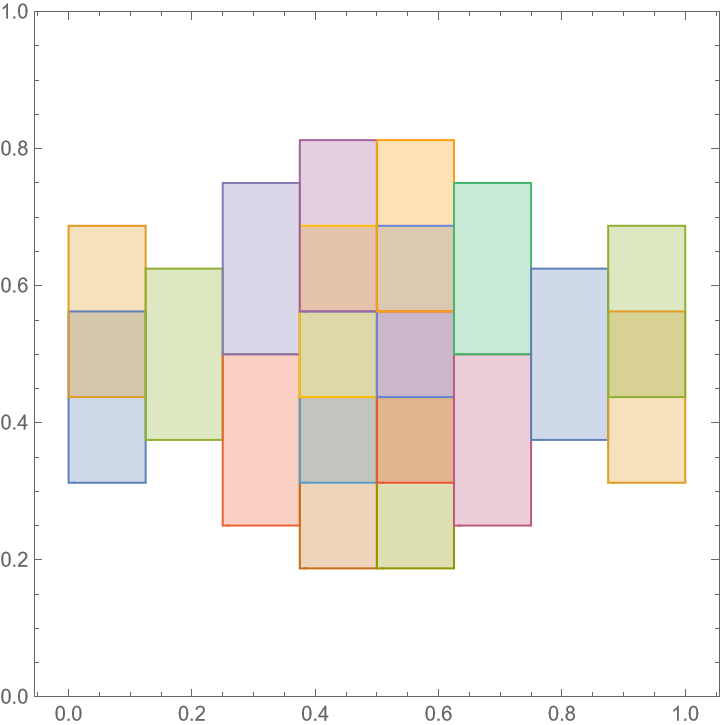}}
}	\\	
			\mbox{\subfigure[Front]
				{	\includegraphics[scale=0.24]{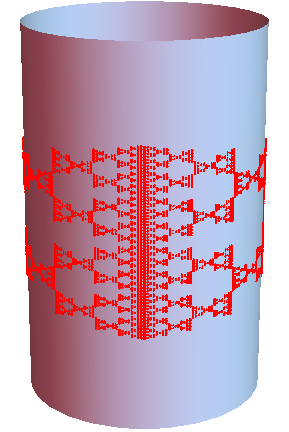}}
			}\qquad\qquad
			\mbox{\subfigure[Back]
				{	\includegraphics[scale=0.24]{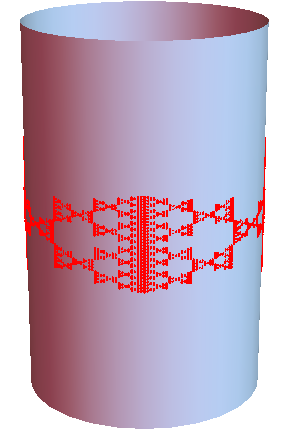}}
			}
			\caption{Figures for Example \ref{exam:osc1} with $r=\pi/4$. (a)--(c) are drawn on $\mathbb R^2$ and shrunk by $2\pi$. (a) The first iteration of  $E_0$ under $\{R_t\}_{t=1}^3$, where $R_1(E_0)$ consists of the left and right rectangles, $R_2(E_0)$ is the top square, and $R_3(E_0)$ is the bottom square. (b) Vertices of the GIFS associated to  $\{R_t\}_{t=1}^3$. (c) The first iteration of the vertices under the GIFS.  (d)--(e) The attractor of $\{R_t\}_{t=1}^3$.}
			\label{fig:osc1}
		\end{figure}

		\begin{proof}
			For any $t\in\{1,2,3\}$, by the definition of $R_t$, we have $H_t^1=H_t:=\{(-1,0,z):z\in[0,2\pi]\}$. Let $r_t:=R_t|_{H_t}$ and $r_t(H_t)=\bigcup_{l=1}^2h_t^{l,1}(H_t^1)$. Then for any $\boldsymbol{x}\in H_t^1$,
			\begin{align*}
				&h_1^{1,1}(\boldsymbol{x})=(0,1,z/2+\pi/2),\quad	&&h_1^{2,1}(\boldsymbol{x})=(0,-1,z/2+\pi/2),\\
				&h_2^{1,1}(\boldsymbol{x})=(0,-1,z/2+\pi-r),\quad	&&h_2^{2,1}(\boldsymbol{x})=(0,1,z/2+\pi-r),\\
				&h_3^{1,1}(\boldsymbol{x})=(0,-1,z/2+r),\quad&&	h_3^{2,1}(\boldsymbol{x})=(0,1,z/2+r).
			\end{align*}
			For any $t\in\{1,2,3\}$, let
			\begin{align*}
				&J_t^1:=\big\{(\cos\theta,\sin\theta,z):\theta\in(-\pi,0],z\in[0,2\pi]\big\}\quad \text{and}\\
				&J_t^2:=\big\{(\cos\theta,\sin\theta,z):\theta\in(0,\pi),z\in[0,2\pi]\big\}.
			\end{align*}
			Then $E_0\backslash H_t=\bigcup_{i=1}^2J_t^i$. Let $\tilde{r}_t:=R_t|_{E_0\backslash H_t}$ and $\tilde{r}_t(E_0\backslash H_t)=\bigcup_{i=1}^2h_t^{0,i}(J_t^i)$. Then for any $\boldsymbol{x} \in J_t^i$, where $i\in\{1,2\}$,
			\begin{align*}
				&h_1^{0,i}(\boldsymbol{x})=(-\cos (\theta/2),-\sin (\theta/2),z/2+\pi/2),\\
				&h_2^{0,i}(\boldsymbol{x})=(\cos (\theta/2),\sin (\theta/2),z/2+\pi-r),\\
				&h_3^{0,i}(\boldsymbol{x})=(\cos (\theta/2),\sin (\theta/2),z/2+r).
			\end{align*}
			Hence for any $t\in \{1,2,3\}$, $h_t^{l,1}$ are contractions, where $l\in\{1,2\}$, and $h_t^{0,i}$ are  contractions, where $i\in\{1,2\}$. Moreover, for $t=1$, we have 
			\begin{align*}
			\overline{h_t^{1,1}(H_t)}\subseteq \overline{J_t^2},\qquad \overline{h_t^{2,1}(H_t)}\subseteq \overline{J_t^1},\qquad \overline{h_t^{0,1}(J_t^1)}\subseteq \overline{J_t^2},\qquad \overline{h_t^{0,2}(J_t^2)}\subseteq \overline{J_t^1};
			\end{align*}
			for any $t\in\{2,3\}$, we have
				\begin{align*}
			 \overline{h_t^{1,1}(H_t)}\subseteq \overline{J_t^1},\qquad \overline{h_t^{2,1}(H_t)}\subseteq \overline{J_t^2}, \qquad  \overline{h_t^{0,i}(J_t^i)}\subseteq \overline{J_t^i},\,\,\text{where} \,\,i\in \{1,2\}.
			\end{align*} 
			Hence  $\{R_t\}_{t=1}^3$ satisfies the conditions of Theorem \ref{thm:exi2}.
			By Theorem \ref{thm:exi2} and Proposition \ref{prop:3.3}, we can find a minimal simplified GIFS $\widehat{G}=(\widehat{V},\widehat{E})$ with $\widehat{V}=\{1,\ldots,6\}$ and $\widehat{E}=\{e_1,\ldots,e_{18}\}$. The invariant family $\{\widehat{W}_i\}_{i=1}^6$ and the associated similitudes $\{f_e\}_{e\in\widehat{E}}$ are defined as
			\begin{align*}
				&\widehat{W}_1:=\{(\cos\theta,\sin\theta,z):\theta\in [-\pi,-\pi/2],\,\, z\in[\pi/2,3\pi/2]\},\\	
				&\widehat{W}_2:=\{(\cos\theta,\sin\theta,z):\theta\in [-\pi/2,0],\,\, z\in[\pi-r,2\pi-r]\},\\
				&\widehat{W}_3:=\{(\cos\theta,\sin\theta,z):\theta\in [-\pi/2,0],\,\, z\in[r,\pi+r]\},\\
				&\widehat{W}_4:=\{(\cos\theta,\sin\theta,z):\theta\in [0,\pi/2],\,\, z\in[\pi-r,2\pi-r]\},\\
				&\widehat{W}_5:=\{(\cos\theta,\sin\theta,z):\theta\in [0,\pi/2],\,\, z\in[r,\pi+r]\},\\
				&\widehat{W}_6:=\{(\cos\theta,\sin\theta,z):\theta\in [\pi/2,\pi],\,\, z\in[\pi/2,3\pi/2]\},
			\end{align*}
			while $\widehat{E}^{i,j}$, $i,j\in\{1,\ldots,6\}$,  and the associated similitudes $\{f_e\}_{e\in\widehat{E}}$ are defined as
			\begin{alignat*}{6}
&e_1\in \widehat{E}^{1,4},\qquad &e_2\in \widehat{E}^{1,5},\qquad &e_3\in \widehat{E}^{1,6},\qquad &e_4\in \widehat{E}^{2,1},\qquad &e_5\in \widehat{E}^{2,2},\qquad&e_6\in \widehat{E}^{2,3},\\
				&e_7\in\widehat{E}^{3,1},\qquad &e_8\in \widehat{E}^{3,2}, \qquad &e_9\in \widehat{E}^{3,3}, \qquad&e_{10}\in \widehat{E}^{4,4},\qquad	&e_{11}\in \widehat{E}^{4,5}, \qquad&e_{12}\in \widehat{E}^{4,6},\\
				&e_{13}\in \widehat{E}^{5,4}, \qquad&e_{14}\in \widehat{E}^{5,5},\qquad &e_{15}\in \widehat{E}^{5,6},\qquad
				&e_{16}\in \widehat{E}^{6,1},\qquad &e_{17}\in \widehat{E}^{6,2},\qquad &e_{18}\in \widehat{E}^{6,3},
			\end{alignat*}
			and
			\begin{align*}
				&f_{e_{1}}:=\widetilde{h}_1^{0,2}|_{\widehat{W}_4}, \quad\,\,	f_{e_{2}}:=\widetilde{h}_1^{0,2}|_{\widehat{W}_5},  \,\,\quad\,\,\,	f_{e_{3}}:=\widetilde{h}_1^{0,2}|_{\widehat{W}_6},\quad\,\,\,\,\, f_{e_{4}}:=\widetilde{h}_2^{0,1}|_{\widehat{W}_1}, \quad  \,\,\, f_{e_{5}}:=\widetilde{h}_2^{0,1}|_{\widehat{W}_2},\\  		&f_{e_{6}}:=\widetilde{h}_2^{0,1}|_{\widehat{W}_3}, \quad\,\,	f_{e_{7}}:=\widetilde{h}_3^{0,1}|_{\widehat{W}_1},  \quad\,\,\,\,\,	f_{e_{8}}:=\widetilde{h}_3^{0,1}|_{\widehat{W}_2},\quad\,\,\,\,\, f_{e_{9}}:=\widetilde{h}_3^{0,1}|_{\widehat{W}_3}, \quad  \,\,\, f_{e_{10}}:=\widetilde{h}_2^{0,2}|_{\widehat{W}_4},\\
				&f_{e_{11}}:=\widetilde{h}_2^{0,2}|_{\widehat{W}_5}, \quad\,\,	f_{e_{12}}:=\widetilde{h}_2^{0,2}|_{\widehat{W}_6},  \quad\,\,	f_{e_{13}}:=\widetilde{h}_3^{0,2}|_{\widehat{W}_4},\quad\,\, f_{e_{14}}:=\widetilde{h}_3^{0,2}|_{\widehat{W}_5}, \quad  \,\, f_{e_{15}}:=\widetilde{h}_3^{0,2}|_{\widehat{W}_6},\\
				&f_{e_{16}}:=\widetilde{h}_1^{0,1}|_{\widehat{W}_1}, \quad\,\,	f_{e_{17}}:=\widetilde{h}_1^{0,1}|_{\widehat{W}_2},  \quad\,\,	f_{e_{18}}:=\widetilde{h}_1^{0,1}|_{\widehat{W}_3},
			\end{align*}
			where $\tilde{h}_t^{l,i}$ and $\tilde{h}_t^{0,i}$ are defined as in \eqref{eq:hl} and \eqref{eq:h0}, respectively.
			Note that $\widehat{G}$ is strong connected.
			Let
			\begin{align*}
				\underline{W}_1&:=\{(\cos\theta,\sin\theta,z):\theta\in (-\pi,-\pi/2),\,\, z\in(\pi/2+r,3\pi/2-r)\},\\		
				\underline{W}_2&:=\{(\cos\theta,\sin\theta,z):\theta\in (-\pi/2,0),\,\, z\in(3\pi/2-2r,2\pi-2r)\},\\
				\underline{W}_3&:=\{(\cos\theta,\sin\theta,z):\theta\in (-\pi/2,0),\,\, z\in(2r,2r+\pi/2)\},\\
				\underline{W}_4&:=\{(\cos\theta,\sin\theta,z):\theta\in (0,\pi/2),\,\, z\in(3\pi/2-2r,2\pi-2r)\},\\
				\underline{W}_5&:=\{(\cos\theta,\sin\theta,z):\theta\in (0,\pi/2),\,\, z\in(2r,2r+\pi/2)\},\\
				\underline{W}_6&:=\{(\cos\theta,\sin\theta,z):\theta\in (\pi/2,\pi),\,\, z\in(\pi/2+r,3\pi/2-r)\}.
			\end{align*}
			Let
			$$\widetilde{K}_i=\bigcup_{j=1}^6\bigcup_{e\in \widehat{E}^{i,j}}f_e(\widetilde{K}_j).$$
			Then for $i\in\{1,\ldots,6\}$,  $U_i:=\underline{W}_i\backslash \widetilde{K}_i$ is an open set. For all $i\in\{1,\ldots,6\}$, $\{f_e\}_{e\in \widehat{E}}$ satisfies
			\begin{align*}
				\bigcup_{e\in \widehat{E}^{i,j}}f_e(U_j)\subseteq U_i\quad\text{and}\quad f_{e_1}(U_{j_1})\bigcap f_{e_2}(U_{j_1})=\emptyset,\,\,\text{for}\,\,e_1\in \widehat{E}^{i,j_1}\,\,\text{and}\,\,e_2\in \widehat{E}^{i,j_2}.
			\end{align*}
			Hence $\widehat{G}$ satisfies (GOSC).
			The weighted incidence matrix associated to $\widehat{G}$  is
			$$A_\alpha=\Big(\frac{1}{2}\Big)^\alpha\footnotesize{\begin{bmatrix}
					\begin{array}{cccccc}
						0  &0   &0	&1  &1   &1	   \\
						1  &1   &1 	&0  &0   &0	  \\
						1  &1   &1 	&0  &0   &0	  \\
						0  &0   &0	&1  &1   &1	   \\
						0  &0   &0	&1  &1   &1	   \\
						1  &1   &1 	&0  &0   &0	  \\
					\end{array}
				\end{bmatrix}.}$$
			The spectral radius of $A_\alpha$ is $3(1/2)^\alpha$. Therefore,
			$$\dim_H(K)=\frac{\log 3}{\log 2}= 1.58496\ldots.$$
		\end{proof}
		
		\begin{exam}\label{R_1}
			Let $E_0:=[0,1]$, and let $\{R_t\}_{t=1}^2$ be an IRS on $E_0$ defined as
			\begin{align*}
				R_1(\boldsymbol{x}) &:= (1/2)\boldsymbol{x}+1/2;\\
				R_2(\boldsymbol{x}) &:=
				\begin{cases}
					\{(1/2)\boldsymbol{x},(1/2)\boldsymbol{x}+3/8\}	,  &\boldsymbol{x}\in[0,1/2], \\
					(1/2)\boldsymbol{x}, & \boldsymbol{x}\in (1/2,1].
				\end{cases}
			\end{align*}
		 Let $K$ be the associated attractor (see Figure \ref{fig.2}).  Then
			$$\dim_H(K)=\frac{\log \big((1+\sqrt 5)/2\big)}{\log2}= 0.694242\ldots.$$
		\end{exam}
		The proof of this example is similar to that of Example \ref{exam:osc1}; and is omitted.
		\begin{figure}[htbp]
			\centering
			\begin{tikzpicture}[scale=0.8]
				\draw[black,very thick](0,0)--(10,0);
				\draw[red,thick](0,0.2)--(5,0.2);
				\draw[blue,semithick](5.05,0.2)--(10,0.2);
				\draw[red,thick](0,-1.5)--(2.5,-1.5);
				\draw[red,thick](3.75,-1.5)--(6.25,-1.5);
				\draw[blue,thick](6.3,-1.5)--(8.75,-1.5);
				\draw[black,very thick](5,-1.8)--(10,-1.8);
				\draw[black,thin,dashed](5,0.6)--(5,-2.3);
				\draw [thick] (0,-.1) node[below]{0} -- (0,0.1);
				\draw [thick] (10,-.1) node[below]{1}-- (10,0.1);
				\draw[blue](5,0.2)circle(.05);
				\draw[blue](6.25,-1.5)circle(.05);
				\draw[black] (-1,-0.1) node[right]{$E_0$};
				\draw[black] (-1,-1.65) node[right]{$E_1$};
				\draw[black] (2,0.45) node[right]{$H_2$};
				\draw[black] (7,0.45) node[right]{$E_0\backslash H_2$};
				\draw [red,->] (2,0.15) -- (1.75,-1.35);
				\draw [red,->] (4,0.15) -- (4.55,-1.35);
				\draw [blue,->] (8.5,0.15) -- (7.9,-1.35);
					\draw [black,->] (9.4,-0.1) -- (8.9,-1.7);
				\draw[red] (1.73,-0.8) node[right]{$R_2$};
				\draw[red] (4.25,-0.8) node[right]{$R_2$};
				\draw[blue] (7.5,-0.8) node[right]{$R_2$};
				\draw[black] (8.95,-1.2) node[right]{$R_1$};
				
			\end{tikzpicture}
			\caption{The sets $H_2=[0,1/2]$, $E_0\backslash H_2=(1/2,1]$, and $E_1=\bigcup_{t=1}^2R_t(E_0)$ in Example \ref{R_1}.}\label{fig.2}
		\end{figure}

		\begin{exam}\label{exam:osctri}
			Let $\mathcal{C}^2:=\big\{(\cos\theta,\sin\theta,z):\theta\in[-\pi,\pi],z\in [0,2\pi]\big\}$ be a cylindrical surface. Let
			\begin{align*}
				E_0^1:=&\big\{(\cos\theta,\sin\theta,z):\theta\in[-\pi,0],z\in [0,\sqrt{3}\,\theta+\sqrt{3}\,\pi]\big\},\\ E_0^2:=&\big\{(\cos\theta,\sin\theta,z):\theta\in[0,\pi],z\in [0,-\sqrt{3}\,\theta+\sqrt{3}\,\pi]\big\},
			\end{align*}
			and $E_0:=E_0^1\bigcup E_0^2$.
			Let $\boldsymbol{x}:=(\cos\theta,\sin\theta,z)\in E_0$ and let $\{R_t\}_{t=1}^3$ be an IRS on $E_0$ defined as
			\begin{align*}
				&R_1(\boldsymbol{x}):=\left\{
				\begin{aligned}
					&(\cos (\theta/2-\pi/2),\sin (\theta/2-\pi/2),z/2),\quad  &&\boldsymbol{x}\in E_0\backslash \{(-1,0,0)\},\\
					&\{(1,0,0),(-1,0,0)\} \quad &&\boldsymbol{x}=(-1,0,0);\\
				\end{aligned}	
				\right.\\
				&R_2(\boldsymbol{x}):=\left\{
				\begin{aligned}
					&(\cos (\theta/2+\pi/2),\sin (\theta/2+\pi/2),z/2),\quad &&\boldsymbol{x}\in E_0\backslash \{(-1,0,0)\},\\
					&\{(-1,0,0),(1,0,0)\}, \quad &&\boldsymbol{x}=(-1,0,0);\\
				\end{aligned}	
				\right.\\
				&R_3(\boldsymbol{x}):=\left\{
				\begin{aligned}
					&(\cos (\theta/2),\sin (\theta/2),z/2+\sqrt{3}\,\pi/2),\qquad\,\,  &&\boldsymbol{x}\in E_0\backslash \{(-1,0,0)\},\\
					&\{(0,1,\sqrt{3}\,\pi/2),(0,-1,\sqrt{3}\,\pi/2)\} \qquad &&\boldsymbol{x}=(-1,0,0).
				\end{aligned}	
				\right.
			\end{align*}
			Let $K$ be the associated attractor (see Figure \ref{fig:osctri}).
			Then
			$$\dim_H(K)=\frac{\log3}{\log2}= 1.58496\ldots.$$
		\end{exam}
		
		The proof of this example is similar to that of Example \ref{exam:osc1};  and is again omitted.
		\begin{figure}[H]
			\centering
			\mbox{\subfigure[Front]
				{	\includegraphics[scale=0.24]{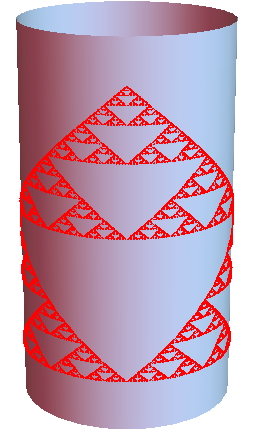}}
			}\qquad\qquad
			\mbox{\subfigure[Back]
				{	\includegraphics[scale=0.24]{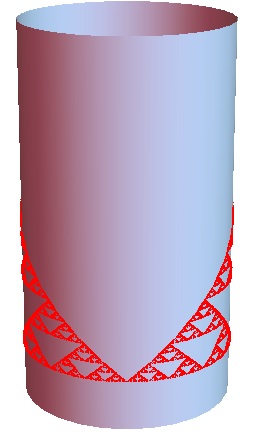}}
			}
			\caption{The attractor of $\{R_t\}_{t=1}^3$ in Example \ref{exam:osctri}.}
			\label{fig:osctri}
		\end{figure}

		\section{Hausdorff dimension of graph self-similar sets with overlaps}\label{S:GFTC}
		\setcounter{equation}{0}
		In this section,  we study IRSs with overlaps (see Definition \ref{defi:rtgosc}).	We give the definition of the graph finite type condition (GFTC) and prove Theorems \ref{thm:main4}--\ref{thm:main5}. Moreover, we illustrate our method for computing the Hausdorff dimension of the associated attractors by several examples.

		\subsection{Graph finite type condition} 
		 The definitions of  an equivalence relation and a 	sequence of nested index sets that appear in the following definition can be found in \cite{Ngai-Wang-Dong_2010,Ngai-Xu_2023} and are included in the Appendix A for completeness.
		\begin{defi}\label{defi:GFTC}
			Let $ X $ be a complete metric space. Let $G = (V, E)$ be a GIFS of contractions $\{f_e\}_{e\in E}$ on $X$, where $V=\{1,\ldots, m\}$. If there exists an invariant family of  nonempty bounded open sets $\mathbf{U}=\{U_i\}_{i=1}^m$ with respect to some
			sequence of nested index sets $\{\mathcal{M}_k\}_{k=0}^\infty$ such that 
			$\#\mathcal{V}/_\sim:=\{[\mathbf{v}]_{\mathbf{U}},\mathbf{v}\in \mathcal{V}\}$ is a finite set,
		    where $\sim$ is an equivalence relation on $\mathcal{V}$, and $\mathcal{V}$ is defined as in \eqref{eq:v},
			then we say that $G=(V,E)$ satisfies {\em the graph finite type condition} (GFTC). We say that
			$\mathbf{U}$ is {\em a  finite type condition}.
		\end{defi}
	
	\begin{defi}\label{defi:rtgftc}
		Let $ X $ be a complete metric space. Let $\{R_t\}_{t=1}^N$ be an IRS on a nonempty compact subset of $ X $. Assume that there exists a GIFS $G$ associated to $\{R_t\}_{t=1}^N$ and assume that  $G$ consists of contractions. If $G$ satisfies (GFTC), then we say that {\em $\{R_t\}_{t=1}^N$
			satisfies (GFTC) with respect to $G$}.
		\end{defi}
		The following theorem provides a sufficient condition for a GIFS to satisfy the finite type condition. Recall that an algebraic integer $\beta > 1$ is called a {\em Pisot number} if all of its algebraic conjugates are in modulus strictly less than one.
		
		\begin{thm}\label{thm:Pisot}
			Let $M$ be a complete smooth $n$-dimensional Riemannian manifold that is locally Euclidean. Let $G = (V, E)$ be a GIFS of contractive similitudes $\{f_e\}_{e\in E}$ on $M$. Let $\{W_i\}_{i=1}^m$ be an invariant family of nonempty compact sets, and let $\mathbf{U}:=\{U_i\}_{i=1}^m$ be an invariant family of nonempty bounded open sets with $\overline{U}_i=W_i$, $i=1,\ldots,m$. For each similitude $f_e$, $e\in E$, assume that there exists an isometry
			$$g_i:U_i\to U'_i\subseteq \R^n$$
			such that for any $e'\in E'$, $f'_{e'}:=g_i\circ f_e\circ g_i^{-1}$ is contractive similitude of the form
			$$f'_{e'}(x)=\beta^{-n_{e'}}R_{e'}(x)+b_{e'},$$
			where $E'$ is a set of directed edges, $\beta>1$ is a Pisot number, $n_{e'}$ is a positive integer, $R_{e'}$ is an orthogonal transformation, and $b_{e'}\in \R^n$. Assume that $\{R_{e'}\}_{e'\in E'}$ generates a finite group $H$ and
			$$H\{b_{e'}|e'\in E'\}\subseteq r_1\Z[\beta]\times\cdots\times r_n\Z[\beta]$$
			for some $r_1,\ldots,r_n\in \R$. Then $G$ is of finite type and $\mathbf{U}$ is a finite type condition family of $G$.
		\end{thm}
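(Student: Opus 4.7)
\smallskip

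My plan is to reduce the problem to an essentially Euclidean one via the isometries $g_i$, and then adapt the classical Pisot-number argument of Ngai--Wang to the graph-directed setting. First I would observe that since $M$ is locally Euclidean and each $g_i \colon U_i \to U'_i \subseteq \mathbb{R}^n$ is an isometry, the finite type property of $G$ with respect to $\mathbf{U}$ is equivalent to the corresponding finite type property of the conjugated GIFS $G' = (V,E')$ of similitudes $\{f'_{e'}\}_{e'\in E'}$ acting on $\{U'_i\}_{i=1}^m$. This is because the equivalence relation on $\mathcal{V}$ used to define (GFTC) is defined in terms of metric data (inclusions, intersections, contraction ratios, and isometric images of the $U_i$'s), and $g_i$ preserves all of these.

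Next I would construct an explicit sequence of nested index sets $\{\mathcal{M}_k\}_{k=0}^\infty$ adapted to the Pisot structure. For each directed path $\mathbf{e} = (e_1,\ldots,e_q) \in E'_*$, set $n_{\mathbf{e}} := n_{e_1} + \cdots + n_{e_q}$, so that the contraction ratio of $f'_{\mathbf{e}}$ equals $\beta^{-n_{\mathbf{e}}}$. I would then let $\mathcal{M}_k$ consist of all paths $\mathbf{e}$ such that $n_{\mathbf{e}} \le k < n_{\mathbf{e}} + n_{e_{q+1}}$ for every possible extension, i.e., the paths realized at "level $k$" in the natural stratification by the integer exponents $n_{e'}$. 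A direct computation gives
\begin{equation*}
f'_{\mathbf{e}}(x) \;=\; \beta^{-n_{\mathbf{e}}} R_{\mathbf{e}}(x) + b_{\mathbf{e}},
\end{equation*}
where $R_{\mathbf{e}}$ is a product of the $R_{e_i}$ and $b_{\mathbf{e}}$ is a $\beta^{-n_{e_i}}$-weighted sum of the translation vectors $R_{e_1}\cdots R_{e_{i-1}}(b_{e_i})$. By hypothesis $R_{\mathbf{e}} \in H$ (a finite group) and $b_{\mathbf{e}} \in r_1\mathbb{Z}[\beta] \times \cdots \times r_n\mathbb{Z}[\beta]$.

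The heart of the argument, and the main obstacle, is to show that the equivalence classes $[\mathbf{v}]_{\mathbf{U}}$ are finite in number. For this I would compare two paths $\mathbf{e}, \mathbf{e}' \in \mathcal{M}_k$ whose images $f'_{\mathbf{e}}(U'_j)$ and $f'_{\mathbf{e}'}(U'_{j'})$ are "neighbors" (have bounded normalized distance). The normalized translation difference $\beta^{k}(b_{\mathbf{e}} - b_{\mathbf{e}'})$ is uniformly bounded in $\mathbb{R}^n$ and lies in $r_1\mathbb{Z}[\beta] \times \cdots \times r_n\mathbb{Z}[\beta]$. Here the Pisot hypothesis enters decisively: for each coordinate $r_i \mathbb{Z}[\beta]$, an element with bounded archimedean value and bounded denominator exponent has only finitely many possibilities, because the Galois conjugates of any $\mathbb{Z}[\beta]$-element arising from such a sum are also bounded (the conjugates $\beta^{(j)}$ satisfy $|\beta^{(j)}|<1$, so the conjugate-side contributions converge geometrically). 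This yields a finite set of possible normalized translation differences; combined with the finiteness of $H$ and of the vertex set $V$, it forces finiteness of $\mathcal{V}/\!\sim$.

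Finally, I would verify that $\mathbf{U}$ qualifies as an invariant family with respect to $\{\mathcal{M}_k\}$ in the precise sense required by Definition~\ref{defi:GFTC}, which reduces to routine bookkeeping once the nested index sets have been defined and the neighbor-finiteness established. The main technical step I anticipate needing care with is patching together the local Euclidean isometries $g_i$ consistently under iteration: since each $f_e$ maps between different $U_i$'s, one must check that every composition $f_{e_1}\circ\cdots\circ f_{e_q}$ is, after conjugation by the appropriate initial and terminal $g_i$'s, globally a single similitude on $\mathbb{R}^n$ of the stated form, with no holonomy obstruction. This follows from the assumption that each individual conjugation $g_i\circ f_e \circ g_i^{-1}$ already has the required form, combined with invariance of $\{U_i\}$.
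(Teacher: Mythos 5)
Your proposal matches the paper's proof in its essential approach: both conjugate the GIFS by the isometries $g_i$ to obtain a Euclidean GIFS $G'=(V',E')$ of similitudes $\{f'_{e'}\}_{e'\in E'}$ acting on $\{U_i'\}_{i=1}^m\subseteq\R^n$, establish finite type there using the Pisot structure, and then transfer the finiteness of neighborhood types back to $G$ via the isometries. The only difference is that the paper simply invokes \cite[Theorem 2.7]{Das-Ngai_2004} for the Euclidean core, whereas you sketch that argument directly (nested index sets stratified by the exponents $n_{\mathbf{e}}$, boundedness of the Galois conjugates of the normalized translation differences); the patching issue you flag at the end is exactly the point the paper disposes of by asserting $\{[\mathbf{v}]_{\mathbf{U}}:\mathbf{v}\in\mathcal{V}\}=\{[\mathbf{v}']_{\mathbf{U}'}:\mathbf{v}'\in\mathcal{V}'\}$.
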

		\begin{proof}
			By the assumptions, we let  $V':=\{1,\ldots,m\}$ be a set of vertices. Then  $G' := (V', E')$ is a GIFS of contractive similitudes $\{f'_{e'}\}_{e'\in E'}$ on $\R^n$, and $\mathbf{U'}:=\{U'_i\}_{i=1}^m$ is an invariant family of nonempty bounded open sets for $G'$. By the results of \cite[Theorem 2.7]{Das-Ngai_2004}, we have $G'$ is of finite type and $\mathbf{U'}$ is a finite type condition family for $G'$. Let $\mathbf{v}':=(f'_{e'},i,j,k)\in \mathcal{V'}$ be a vertex (see Appendix A), where $\mathcal{V'}$ is a set of all vertices of $\mathcal{G}'$, $e'\in \widetilde{\mathcal{M}}_k^{i,j}$, $1\leq i,j\leq m$, $k\geq 1$, and $\widetilde{\mathcal{M}}_k^{i,j}\subseteq E'$ is a sequence of nested index sets.
			Then $\{[\mathbf{v}']_{\mathbf{U'}},\mathbf{v}'\in \mathcal{V'}\}$ is a finite set. Let $\mathbf{v}:=(f_{e},i,j,k)\in \mathcal{V}$, where  $e\in \mathcal{M}_k^{i,j}$, $1\leq i,j\leq m$, and $ \mathcal{M}_k^{i,j}\subseteq E^*$ is a sequence of nested index sets.
			It follows from the definition of $f'_{e'}$ that
		$\{[\mathbf{v}]_{\mathbf{U}},\mathbf{v}\in \mathcal{V}\}=\{[\mathbf{v}']_{\mathbf{U'}},\mathbf{v}'\in \mathcal{V'}\}$ is a finite set. This proves the proposition.
		\end{proof}

		We let $\mathcal{T}_1,\ldots, \mathcal{T}_m$ denote the collection of all distinct neighbourhood types, with $[\mathbf{v}_{\text{root}}^i]$, $i=1,\ldots, m$, being the neighbourhood types of the root vertices. As in \cite{Lau-Ngai_2007}, for each $\alpha\geq 0$, we define a {\em weighted incidence
			matrix} $A_\alpha= (A_\alpha(i,j))_{i,j=1}^m$ as follows. Fix $i$ ($1\leq i\leq m$) and a vertex $\mathbf{v}\in \mathcal{V_R}$ such that $[\mathbf{v}] =\mathcal{T}_i$, let $\mathbf{u}_1,\ldots, \mathbf{u}_m$ be the offspring of $\mathbf{v}$ in $\mathcal{V_R}$ and let $\mathbf{k}_l$, $1\leq l\leq m$, be the unique edge in $\mathcal{G_R}$ connecting $\mathbf{v}$ to $\mathbf{u}_l$. Then we define
		\begin{align}\label{eq:mat}
			A_\alpha(i,j):=\sum\{\rho_{\mathbf{k}_l}^\alpha: \mathbf{v} \stackrel{\mathbf{k}_l}{\longrightarrow} \mathbf{u}_l,\, [\mathbf{u}_l]=\mathcal{T}_j\}.
		\end{align}
		
		\begin{proof}[Proof of Theorem \ref{thm:main4}]
			By Proposition \ref{prop:3.1}, we know that $G$ and $\{R_t\}_{t=1}^N$ have the same attractor. The proof follows by using the results of \cite[Theorem 1.6]{Ngai-Xu_2023}; we omit the details.
		\end{proof}

		\begin{proof}[Proof of Theorem \ref{thm:main5}]
			Combining Proposition \ref{prop:3.1} and Definition \ref{simplified graph}, we know that the attractor of $G$ is equal to that of $\widehat{G}$. The proof follows by using the results of Theorem \ref{thm:main4}.
		\end{proof}

		\subsection{Examples}
		In this subsection, we give three examples of IRSs with overlaps that satisfy (GFTC).
		\begin{exam}\label{exam:gftc1}
			Let $\mathcal{C}^2=\big\{(\cos\theta,\sin\theta,z):\theta\in[-\pi,\pi],z\in[0,2\pi]\big\}$ be a cylindrical surface. Let $E_0:=\mathcal{C}^2$. For $r\in [0,\pi/2)$, we let $\boldsymbol{x}:=(\cos\theta,\sin\theta,z)$ and let $\{R_t\}_{t=1}^4$ be an IRS on $E_0$ defined as
			\begin{align*}
				&R_1(\boldsymbol{x}):=\left\{
				\begin{aligned}
					&(-\cos (\theta/2),-\sin (\theta/2),z/2+\pi/2)\qquad\,\,  &&\boldsymbol{x}\in E_0\backslash \{(-1,0,z):z\in[0,2\pi]\},\\
					&\{(0,-1,z/2+\pi/2),(0,1,z/2+\pi/2)\} \quad\, &&\boldsymbol{x}\in\{(-1,0,z):z\in[0,2\pi]\};\\
				\end{aligned}	
				\right.\\
				&R_2(\boldsymbol{x}):=\left\{
				\begin{aligned}
					&(\cos (\theta/2),\sin (\theta/2),z/2+\pi-r) &&\boldsymbol{x}\in E_0\backslash \{(-1,0,z):z\in[0,2\pi]\},\\
					&\{(0,1,z/2+\pi-r),(0,-1,z/2+\pi-r)\}  &&\boldsymbol{x}\in\{(-1,0,z):z\in[0,2\pi]\};\\
				\end{aligned}	
				\right.\\
				&R_3(\boldsymbol{x}):=\left\{
				\begin{aligned}
					&(\cos (\theta/2),\sin (\theta/2),z/2+\pi/2)\, \quad\,\, &&\boldsymbol{x}\in E_0\backslash \{(-1,0,z):z\in[0,2\pi]\},\\
					&\{(0,1,z/2+\pi/2),(0,-1,z/2+\pi/2)\} \quad\,\, &&\boldsymbol{x}\in\{(-1,0,z):z\in[0,2\pi]\};\\
				\end{aligned}	
				\right.\\
				&R_4(\boldsymbol{x}):=\left\{
				\begin{aligned}
					&(\cos (\theta/2),\sin (\theta/2),z/2+r)\quad\quad&&\boldsymbol{x}\in E_0\backslash \{(-1,0,z):z\in[0,2\pi]\},\\
					&\{(0,1,z/2+r), (0,-1,z/2+r)\}\qquad\quad\,\,\,\, &&\boldsymbol{x}\in\{(-1,0,z):z\in[0,2\pi]\}.
				\end{aligned}	
				\right.
			\end{align*}
			Let $K$ be the associated attractor (see Figure \ref{fig:gftc}).  Then
			$$\dim_H(K)=\frac{\log(2+\sqrt2)}{\log2}= 1.77155\ldots.$$
		\end{exam}
	
		\begin{proof}
			For any $t\in\{1,\ldots,4\}$, by the definition of $R_t$, we have $H_t^1=H_t:=\{(-1,0,z):z\in[0,2\pi]\}$. Let $r_t:=R_t|_{H_t}$ and $r_t(H_t)=\bigcup_{l=1}^2h_t^{l,1}(H_t^1)$. Then for any $\boldsymbol{x}\in H_t^1$,
			\begin{align*}
				&h_1^{1,1}(\boldsymbol{x})=(0,1,z/2+\pi/2),\quad	&&h_1^{2,1}(\boldsymbol{x})=(0,-1,z/2+\pi/2),\\
				&h_2^{1,1}(\boldsymbol{x})=(0,-1,z/2+\pi-r),\quad	&&h_2^{2,1}(\boldsymbol{x})=(0,1,z/2+\pi-r),\\
				&h_3^{1,1}(\boldsymbol{x})=(0,-1,z/2+\pi/2),\quad	&&h_3^{2,1}(\boldsymbol{x})=(0,1,z/2+\pi/2),\\
				&h_4^{1,1}(\boldsymbol{x})=(0,-1,z/2+r),\quad	    &&h_4^{2,1}(\boldsymbol{x})=(0,1,z/2+r).
			\end{align*}
			\begin{figure}[H]
			\centering
			\mbox{\subfigure[]
				{	\includegraphics[scale=0.34]{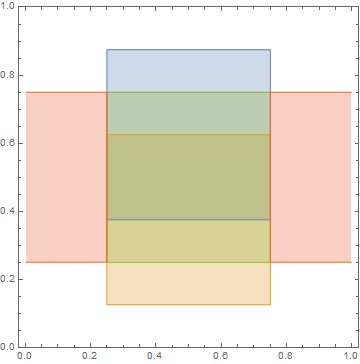}}
			}\quad
			\mbox{\subfigure[]
				{	\includegraphics[scale=0.34]{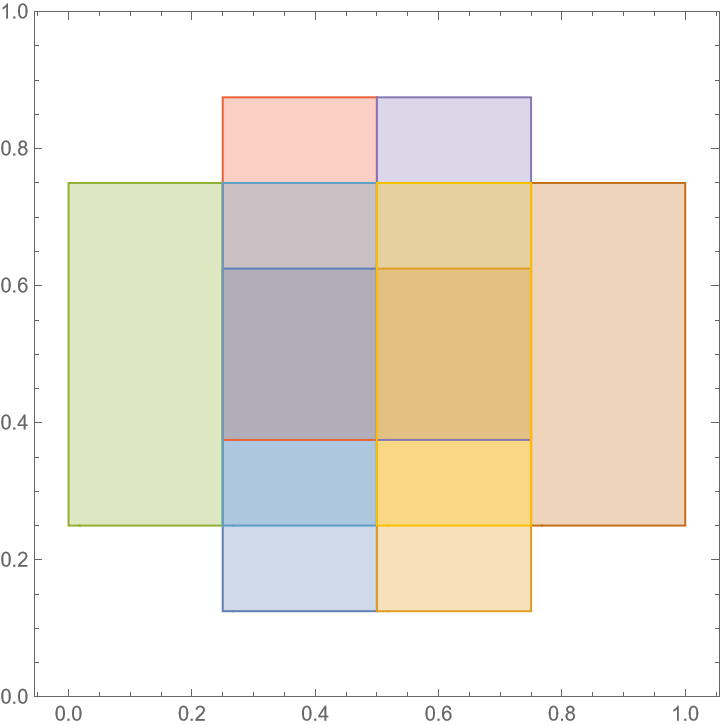}}
			}	\quad
			\mbox{\subfigure[]
				{	\includegraphics[scale=0.34]{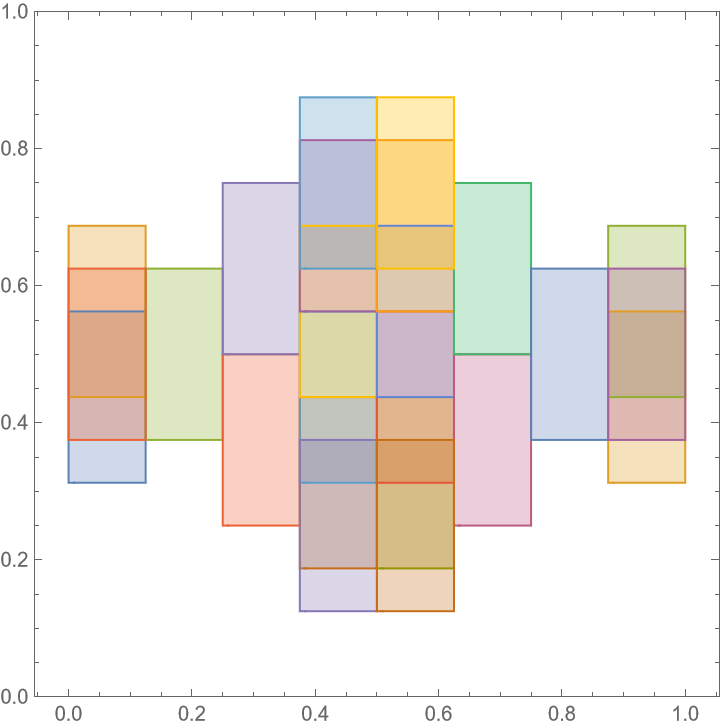}}
			}	\\	
			\mbox{\subfigure[Front]
				{	\includegraphics[scale=0.23]{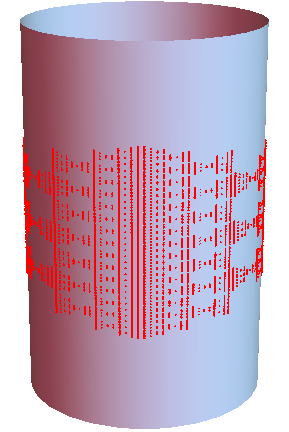}}
			}\qquad\qquad
			\mbox{\subfigure[Back]
				{	\includegraphics[scale=0.23]{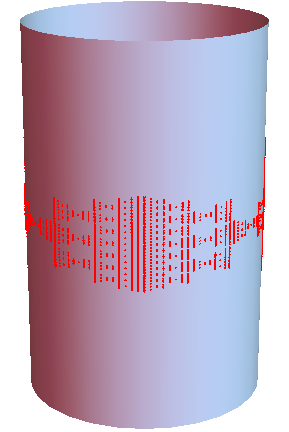}}
			}
			\caption{Figures for Example \ref{exam:gftc1} with $r=\pi/4$. (a)--(c) are drawn on $\mathbb R^2$ and shrunk by $2\pi$. (a) The first iteration of  $E_0$ under $\{R_t\}_{t=1}^4$, where $R_1(E_0)$ consists of the left and right rectangles, $R_2(E_0)$ is the top square, $R_3(E_0)$ is the middle square, and $R_4(E_0)$ is the bottom square. (b) Vertices of the GIFS associated to  $\{R_t\}_{t=1}^4$. (c) The first iteration of the vertices under the GIFS.  (d)--(e) The attractor of $\{R_t\}_{t=1}^4$.}
			\label{fig:gftc}
		\end{figure}
			For any $t\in\{1,\ldots,4\}$, by the definition of $R_t$, we have
			\begin{align*}
				&J_t^1:=\big\{(\cos\theta,\sin\theta,z):\theta\in(-\pi,0],z\in[0,2\pi]\big\}\quad{and}\\
				&J_t^2:=\big\{(\cos\theta,\sin\theta,z):\theta\in(0,\pi),z\in[0,2\pi]\big\}.
			\end{align*}
			Hence $E_0\backslash H_t=\bigcup_{i=1}^2 J_t^i$. Let $\tilde{r}_t:=R_t|_{E_0\backslash H_t}$ and $\tilde{r}_t(E_0\backslash H_t)=\bigcup_{i=1}^2h_t^{0,i}(J_t^i)$. Then for any $\boldsymbol{x} \in J_t^i$, where $i\in \{1,2\}$,
			\begin{align*}
				&h_1^{0,i}(\boldsymbol{x})=(-\cos (\theta/2),-\sin (\theta/2),z/2+\pi/2),\\
				&h_2^{0,i}(\boldsymbol{x})=(\cos (\theta/2),\sin (\theta/2),z/2+\pi-r),\\
				&h_3^{0,i}(\boldsymbol{x})=(\cos (\theta/2),\sin (\theta/2),z/2+\pi/2),\\
				&h_4^{0,i}(\boldsymbol{x})=(\cos (\theta/2),\sin (\theta/2),z/2+r).
			\end{align*}
			Hence for any $t\in \{1,\ldots,4\}$ and $l\in\{1,2\}$, $h_t^{l,1}$ are contractions, and for any $i\in\{1,2\}$, $h_t^{0,i}$ are  contractions. Moreover, for $t=1$, we have
				\begin{align*}
				 \overline{h_t^{1,1}(H_t)}\subseteq \overline{J_t^2},\qquad \overline{h_t^{2,1}(H_t)}\subseteq \overline{J_t^1},\qquad \overline{h_t^{0,1}(J_t^1)}\subseteq \overline{J_t^2}, \qquad \overline{h_t^{0,2}(J_t^2)}\subseteq \overline{J_t^1};
				\end{align*}
			for any $t\in\{2,3,4\}$, 
				\begin{align*}
			 \overline{h_t^{1,1}(H_t)}\subseteq \overline{J_t^1},\qquad \overline{h_t^{2,1}(H_t)}\subseteq \overline{J_t^2},\qquad \overline{h_t^{0,i}(J_t^i)}\subseteq \overline{J_t^i},\,\,\text{where}\,\,i\in \{1,2\}.
			 	\end{align*}
			Hence  $\{R_t\}_{t=1}^4$ satisfies the conditions of Theorem \ref{thm:exi2}.	 
			By Theorem \ref{thm:exi2} and Proposition \ref{prop:3.3}, we can find a minimal simplified GIFS $\widehat{G}=(\widehat{V},\widehat{E})$ with $\widehat{V}=\{1,\ldots,8\}$ and $\widehat{E}=\{e_1,\ldots,e_{32}\}$. The invariant family $\{\widehat{W}_i\}_{i=1}^8$, the set of edges $\widehat{E}^{i,j}$ and the associated similitudes $\{f_e\}_{e\in\widehat{E}}$ are given defined as
			\begin{align*}
				\widehat{W}_1&:=\{(\cos\theta,\sin\theta,z):\theta\in [-\pi,-\pi/2],\,\, z\in[\pi/2,3\pi/2]\},\\	\widehat{W}_2&:=\{(\cos\theta,\sin\theta,z):\theta\in [-\pi/2,0],\,\, z\in[\pi-r,2\pi-r]\},\\
				\widehat{W}_3&:=\{(\cos\theta,\sin\theta,z):\theta\in [-\pi/2,0],\,\, z\in[\pi/2,3\pi/2]\},\\	\widehat{W}_4&:=\{(\cos\theta,\sin\theta,z):\theta\in [-\pi/2,0],\,\, z\in[r,\pi+r]\}\\
				\widehat{W}_5&:=\{(\cos\theta,\sin\theta,z):\theta\in [0,\pi/2],\,\, z\in[\pi-r,2\pi-r]\},\\
				\widehat{W}_6&:=\{(\cos\theta,\sin\theta,z):\theta\in [0,\pi/2],\,\, z\in[\pi/2,3\pi/2]\},\\
				\widehat{W}_7&:=\{(\cos\theta,\sin\theta,z):\theta\in [0,\pi/2],\,\, z\in[r,\pi+r]\},\\
				\widehat{W}_8&:=\{(\cos\theta,\sin\theta,z):\theta\in [\pi/2,\pi],\,\, z\in[\pi/2,3\pi/2]\},
			\end{align*}
			\begin{alignat*}{5}
				&e_1\in \widehat{E}^{1,5},\qquad &e_2\in \widehat{E}^{1,6},\qquad&e_3\in \widehat{E}^{1,7},\qquad&e_4\in \widehat{E}^{1,8},\qquad
				&e_5\in \widehat{E}^{2,1},\qquad&e_6\in \widehat{E}^{2,2},\\&e_7\in \widehat{E}^{2,3},\qquad&e_8\in \widehat{E}^{2,4},\qquad
				&e_9\in \widehat{E}^{3,1},\quad &e_{10}\in \widehat{E}^{3,2},\qquad&e_{11}\in \widehat{E}^{3,3},\qquad&e_{12}\in \widehat{E}^{3,4},\\
				&e_{13}\in \widehat{E}^{4,1},\qquad&e_{14}\in \widehat{E}^{4,2},\qquad&e_{15}\in \widehat{E}^{4,3},\qquad&e_{16}\in \widehat{E}^{4,4},\qquad
				&e_{17}\in \widehat{E}^{5,5}, \qquad&e_{18}\in \widehat{E}^{5,6},\\&e_{19}\in \widehat{E}^{5,7},\qquad&e_{20}\in \widehat{E}^{5,8},\qquad
				&e_{21}\in \widehat{E}^{6,5},\qquad&e_{22}\in \widehat{E}^{6,6},\qquad&e_{23}\in \widehat{E}^{6,7},\qquad&e_{24}\in \widehat{E}^{6,8},\\
				&e_{25}\in \widehat{E}^{7,5}, \qquad&e_{26}\in \widehat{E}^{7,6},\qquad&e_{27}\in \widehat{E}^{7,7},\qquad&e_{28}\in \widehat{E}^{7,8},\qquad
				&e_{29}\in \widehat{E}^{8,1},\qquad&e_{30}\in \widehat{E}^{8,2},\\&e_{31}\in \widehat{E}^{8,3},\qquad&e_{32}\in \widehat{E}^{8,4},\qquad&\qquad&\qquad&\qquad&
			\end{alignat*}
			and
			\begin{alignat*}{5}
				&f_{e_{1}}:=\widetilde{h}_1^{0,2}|_{\widehat{W}_5},  \quad&	f_{e_{2}}:=\widetilde{h}_1^{0,2}|_{\widehat{W}_6},   \quad&	f_{e_{3}}:=\widetilde{h}_1^{0,2}|_{\widehat{W}_7}, \quad& f_{e_{4}}:=\widetilde{h}_1^{0,2}|_{\widehat{W}_8},  \quad&   f_{e_{5}}:=\widetilde{h}_2^{0,1}|_{\widehat{W}_1},\\& f_{e_{6}}:=\widetilde{h}_2^{0,1}|_{\widehat{W}_2}, \quad&
				f_{e_{7}}:=\widetilde{h}_2^{0,1}|_{\widehat{W}_3},  \quad& f_{e_{8}}:=\widetilde{h}_2^{0,1}|_{\widehat{W}_4}, \quad&	f_{e_{9}}:=\widetilde{h}_3^{0,1}|_{\widehat{W}_1}, \quad&
				f_{e_{10}}:=\widetilde{h}_3^{0,1}|_{\widehat{W}_2}, \\& f_{e_{11}}:=\widetilde{h}_3^{0,1}|_{\widehat{W}_3},  \quad&	f_{e_{12}}:=\widetilde{h}_3^{0,1}|_{\widehat{W}_4}, \quad &
				f_{e_{13}}:=\widetilde{h}_4^{0,1}|_{\widehat{W}_1}, \quad& f_{e_{14}}:=\widetilde{h}_4^{0,1}|_{\widehat{W}_2},  \quad&	f_{e_{15}}:=\widetilde{h}_4^{0,1}|_{\widehat{W}_3},\\&
				f_{e_{16}}:=\widetilde{h}_4^{0,1}|_{\widehat{W}_4},  \quad&	f_{e_{17}}:=\widetilde{h}_2^{0,2}|_{\widehat{W}_5},  \quad&	f_{e_{18}}:=\widetilde{h}_2^{0,2}|_{\widehat{W}_6}, \quad&
				f_{e_{19}}:=\widetilde{h}_2^{0,2}|_{\widehat{W}_7},  \quad&  f_{e_{20}}:=\widetilde{h}_2^{0,2}|_{\widehat{W}_8}, \\&	f_{e_{21}}:=\widetilde{h}_3^{0,2}|_{\widehat{W}_5}, \quad&
				f_{e_{22}}:=\widetilde{h}_3^{0,2}|_{\widehat{W}_6},  \quad&	f_{e_{23}}:=\widetilde{h}_3^{0,2}|_{\widehat{W}_7},  \quad&	f_{e_{24}}:=\widetilde{h}_3^{0,2}|_{\widehat{W}_8}, \quad&
				f_{e_{25}}:=\widetilde{h}_4^{0,2}|_{\widehat{W}_5}, \\&  f_{e_{26}}:=\widetilde{h}_4^{0,2}|_{\widehat{W}_6},  \quad&	f_{e_{27}}:=\widetilde{h}_4^{0,2}|_{\widehat{W}_7}, \quad&
				f_{e_{28}}:=\widetilde{h}_4^{0,2}|_{\widehat{W}_8},  \quad&	f_{e_{29}}:=\widetilde{h}_1^{0,1}|_{\widehat{W}_1},  \quad&	f_{e_{30}}:=\widetilde{h}_1^{0,1}|_{\widehat{W}_2}, \\&
				f_{e_{31}}:=\widetilde{h}_1^{0,1}|_{\widehat{W}_3},  \quad&	f_{e_{32}}:=\widetilde{h}_1^{0,1}|_{\widehat{W}_4},  \quad& \quad& \quad&
			\end{alignat*}
			where $\tilde{h}_t^{l,i}$ and $\tilde{h}_t^{0,i}$ are defined as in \eqref{eq:hl} and \eqref{eq:h0}, respectively.
			Let $\{U_i\}_{i=1}^8$ be an invariant family of nonempty bounded open sets with $\overline{U}_i=\widehat{W}_i$, $i\in\{1,\ldots,8\}$.  By Theorem \ref{thm:Pisot}, $\widehat{G}$ is of finite type.
			
			For convenience, we let $f_{e_i}:=f_i$, $i\in\{1,\ldots,32\}$. Let $\mathcal{M}_k:=\{1,\ldots,32\}^k$ for $k\geq0$. Let $\mathcal{T}_1,\ldots,\mathcal{T}_8$ be the neighborhood types of the root neighborhoods $[U_1],\ldots,[U_8]$, respectively. All neighborhood types are generated after two iterations. To construct the weighted incidence matrix in the minimal simplified reduced GIFS $\mathcal{\widehat{G}_R}$ (see Appendix A). We note that
			$$\mathcal{V}_1=\{(f_1,1),\ldots,(f_{32},1)\}.$$
			Denote by $\mathbf{v}_1,\ldots,\mathbf{v}_{32}$ the vertices in $\mathcal{V}_1$ according to the above order. Then
			\begin{align*}
				[\mathbf{v}_5]=[\mathbf{v}_9]=[\mathbf{v}_{13}]=[\mathbf{v}_{29}]=\mathcal{T}_1\qquad\text{and}\qquad
				[\mathbf{v}_4]=[\mathbf{v}_{20}]=[\mathbf{v}_{24}]=[\mathbf{v}_{28}]=\mathcal{T}_2.
			\end{align*}
			Let
			\begin{alignat*}{2}
				&\mathcal{T}_{9}:=[\mathbf{v}_6]=[\mathbf{v}_{10}]=[\mathbf{v}_{14}]=[\mathbf{v}_{30}],\qquad&
				\mathcal{T}_{10}:=[\mathbf{v}_7]=[\mathbf{v}_{11}]=[\mathbf{v}_{15}]=[\mathbf{v}_{31}],\\
				&\mathcal{T}_{11}:=[\mathbf{v}_8]=[\mathbf{v}_{12}]=[\mathbf{v}_{16}]=[\mathbf{v}_{32}],\qquad&
				\mathcal{T}_{12}:=[\mathbf{v}_1]=[\mathbf{v}_{17}]=[\mathbf{v}_{21}]=[\mathbf{v}_{25}],\\
				&\mathcal{T}_{13}:=[\mathbf{v}_2]=[\mathbf{v}_{18}]=[\mathbf{v}_{22}]=[\mathbf{v}_{26}],\qquad&
				\mathcal{T}_{14}:=[\mathbf{v}_3]=[\mathbf{v}_{19}]=[\mathbf{v}_{23}]=[\mathbf{v}_{27}].
			\end{alignat*}
			Then
			\begin{align*}
				&\mathcal{T}_1\rightarrow\mathcal{T}_2+\mathcal{T}_{12}+\mathcal{T}_{13}+\mathcal{T}_{14},\qquad
				&&\mathcal{T}_2\rightarrow\mathcal{T}_1+\mathcal{T}_{9}+\mathcal{T}_{10}+\mathcal{T}_{11},\\
				&\mathcal{T}_3\rightarrow\mathcal{T}_1+\mathcal{T}_{9}+\mathcal{T}_{10}+\mathcal{T}_{11},\qquad
				&&\mathcal{T}_4\rightarrow\mathcal{T}_1+\mathcal{T}_{9}+\mathcal{T}_{10}+\mathcal{T}_{11},\\
				&\mathcal{T}_5\rightarrow\mathcal{T}_2+\mathcal{T}_{12}+\mathcal{T}_{13}+\mathcal{T}_{14},\qquad
				&&\mathcal{T}_6\rightarrow\mathcal{T}_2+\mathcal{T}_{12}+\mathcal{T}_{13}+\mathcal{T}_{14},\\
				&\mathcal{T}_7\rightarrow\mathcal{T}_2+\mathcal{T}_{12}+\mathcal{T}_{13}+\mathcal{T}_{14},\qquad
				&&\mathcal{T}_8\rightarrow\mathcal{T}_1+\mathcal{T}_{9}+\mathcal{T}_{10}+\mathcal{T}_{11}.
			\end{align*}
	Since $f_6f_8=f_7f_6$, the edge $e_6e_8$ is removed in $\mathcal{\widehat{G}_R}$. $\mathbf{v}_6$ generates three offspring
		$$(f_6f_5,2),(f_6f_6,2),(f_6f_7,2)\in\mathcal{V}_2,$$
		where $[(f_6f_5,2)]=\mathcal{T}_{1}$, $[(f_6f_6,2)]=\mathcal{T}_{9}$ and $[(f_6f_7,2)]=\mathcal{T}_{10}$. Hence
		$$\mathcal{T}_{9}\rightarrow\mathcal{T}_1+\mathcal{T}_{9}+\mathcal{T}_{10}.$$
		As $f_7f_8=f_8f_6$, the edge $e_7e_8$ is removed in $\mathcal{\widehat{G}_R}$. $\mathbf{v}_7$ generates three offspring
		$$(f_7f_5,2),(f_7f_6,2),(f_7f_7,2)\in\mathcal{V}_2,$$
		with $[(f_7f_5,2)]=\mathcal{T}_{1}$, $[(f_7f_6,2)]=\mathcal{T}_{9}$ and $[(f_7f_7,2)]=\mathcal{T}_{10}$. Thus
		$$\mathcal{T}_{10}\rightarrow\mathcal{T}_1+\mathcal{T}_{9}+\mathcal{T}_{10}.$$
		$\mathbf{v}_8$ generates four offspring
		$$(f_8f_5,2),(f_8f_6,2),(f_8f_7,2),(f_8f_8,2)\in\mathcal{V}_2,$$
		where $[(f_8f_5,2)]=\mathcal{T}_{1}$, $[(f_8f_6,2)]=\mathcal{T}_{9}$, $[(f_8f_7,2)]=\mathcal{T}_{10}$ and $[(f_8f_8,2)]=\mathcal{T}_{11}$. Therefore,
		$$\mathcal{T}_{11}\rightarrow\mathcal{T}_1+\mathcal{T}_{9}+\mathcal{T}_{10}+\mathcal{T}_{11}.$$
		Using the same argument, we have
		\begin{align*}
			\mathcal{T}_{12}\rightarrow\mathcal{T}_2+\mathcal{T}_{12}+\mathcal{T}_{13},\,\,\,\,\mathcal{T}_{13}\rightarrow\mathcal{T}_2+\mathcal{T}_{12}+\mathcal{T}_{13},\,\,\,\,\mathcal{T}_{14}\rightarrow\mathcal{T}_2+\mathcal{T}_{12}+\mathcal{T}_{13}+\mathcal{T}_{14}.
	\end{align*}
			Since no new neighborhood types are generated, we conclude that the $\mathcal{\widehat{G}_R}$ is of  finite
			type. The weighted incidence matrix is
			\begin{align}\label{eq:matrixtou}
				A_\alpha=\Big(\frac{1}{2}\Big)^\alpha
				\begin{bmatrix}
						\begin{array}{cccccccccccccc}
							0 &1 &0 &0 &0 &0 &0 &0 &0 &0 &0 &1 &1 &1\\
							1 &0 &0 &0 &0 &0 &0 &0 &1 &1 &1 &0 &0 &0\\
							1 &0 &0 &0 &0 &0 &0 &0 &1 &1 &1 &0 &0 &0\\
							1 &0 &0 &0 &0 &0 &0 &0 &1 &1 &1 &0 &0 &0\\
							0 &1 &0 &0 &0 &0 &0 &0 &0 &0 &0 &1 &1 &1\\
							0 &1 &0 &0 &0 &0 &0 &0 &0 &0 &0 &1 &1 &1\\
							0 &1 &0 &0 &0 &0 &0 &0 &0 &0 &0 &1 &1 &1\\
							1 &0 &0 &0 &0 &0 &0 &0 &1 &1 &1 &0 &0 &0\\
							1 &0 &0 &0 &0 &0 &0 &0 &1 &1 &0 &0 &0 &0\\
							1 &0 &0 &0 &0 &0 &0 &0 &1 &1 &0 &0 &0 &0\\
							1 &0 &0 &0 &0 &0 &0 &0 &1 &1 &1 &0 &0 &0\\
							0 &1 &0 &0 &0 &0 &0 &0 &0 &0 &0 &1 &1 &0\\
							0 &1 &0 &0 &0 &0 &0 &0 &0 &0 &0 &1 &1 &0\\
							0 &1 &0 &0 &0 &0 &0 &0 &0 &0 &0 &1 &1 &1\\
						\end{array}
					\end{bmatrix}.
			\end{align}
			The spectral radius $\lambda_\alpha$ of $A_\alpha$ is $(2+\sqrt{2})/2^\alpha$, and by Theorem \ref{thm:main5},
			$$\dim_H(K)=\alpha= 1.77155\ldots,$$
			where $\alpha$ is the unique solution of the equation $\lambda_\alpha=1.$
		\end{proof}

		The following example is from \cite[Example 7.6]{Ngai-Xu_2023}. Here we use the method in the present paper to compute the Hausdorff dimension of the same fractal. The method here is more systematic.
		\begin{exam}\label{exam:gftc2}
			Let $\mathbb{T}^2 := \mathbb{S}^1\times \mathbb{S}^1$ be a flat 2-torus, viewed as
			$[0, 1]\times[0, 1]$ with opposite sides identified, and $\mathbb{T}^2$ be endowed with the Riemannian metric induced from $\R^2$.  We consider the following IFS with overlaps on $\R^2$:
			\begin{align*}
				g_1(\boldsymbol{x})=\frac{1}{2}\boldsymbol{x}+\Big(0,\frac{1}{4}\Big),\qquad g_2(\boldsymbol{x})=\frac{1}{2}\boldsymbol{x}+\Big(\frac{1}{4},\frac{1}{4}\Big),\\
				g_3(\boldsymbol{x})=\frac{1}{2}\boldsymbol{x}+\Big(\frac{1}{2},\frac{1}{4}\Big),\qquad g_4(\boldsymbol{x})=\frac{1}{2}\boldsymbol{x}+\Big(\frac{1}{4},\frac{3}{4}\Big).
			\end{align*}
			Let $H:=\{0\}\times [0,1]\big)\bigcup  \big([0,1]\times\{0\}\big)$. Iterations of $\{g_t\}_{t=1}^4$ induce an IRS $\{R_t\}_{t=1}^4$ on $E_0:=\mathbb{T}^2=\R^2/ \Z^2$, defined as
				\begin{align*}
					&R_1(\boldsymbol{x}):=\left\{
					\begin{aligned}
						&\frac{1}{2}\boldsymbol{x}+\Big(0,\frac{1}{4}\Big),\qquad  &&\boldsymbol{x}\in E_0\backslash H,\\
						& \Big\{\frac{1}{2}\boldsymbol{x}+\Big(0,\frac{1}{4}\Big),\frac{1}{2}\boldsymbol{x}+\Big(\frac{1}{2},\frac{1}{4}\Big) \Big\} \qquad &&\boldsymbol{x}\in \{0\}\times [0,1],\\
						& \Big\{\frac{1}{2}\boldsymbol{x}+\Big(0,\frac{1}{4}\Big),\frac{1}{2}\boldsymbol{x}+\Big(0,\frac{3}{4}\Big) \Big\} \qquad &&\boldsymbol{x}\in  [0,1]\times\{0\};
					\end{aligned}	
					\right.\\
					&R_2(\boldsymbol{x}):=\left\{
					\begin{aligned}
						&\frac{1}{2}\boldsymbol{x}+\Big(\frac{1}{4},\frac{1}{4}\Big),\quad  &&\boldsymbol{x}\in E_0\backslash H,\\
						&\Big\{\frac{1}{2}\boldsymbol{x}+\Big(\frac{1}{4},\frac{1}{4}\Big),\frac{1}{2}\boldsymbol{x}+\Big(\frac{3}{4},\frac{1}{4}\Big) \Big\} \quad &&\boldsymbol{x}\in \{0\}\times [0,1],\\
						& \Big\{\frac{1}{2}\boldsymbol{x}+\Big(\frac{1}{4},\frac{1}{4}\Big),\frac{1}{2}\boldsymbol{x}+\Big(\frac{1}{4},\frac{3}{4}\Big) \Big\} \qquad &&\boldsymbol{x}\in  [0,1]\times\{0\};
					\end{aligned}	
					\right.\\
					&R_3(\boldsymbol{x}):=\left\{
					\begin{aligned}
						&\frac{1}{2}\boldsymbol{x}+\Big(\frac{1}{2},\frac{1}{4}\Big),\qquad  &&\boldsymbol{x}\in E_0\backslash H,\\
						&\Big\{\frac{1}{2}\boldsymbol{x}+\Big(\frac{1}{2},\frac{1}{4}\Big),\frac{1}{2}\boldsymbol{x}+\Big(1,\frac{1}{4}\Big) \Big\} \qquad &&\boldsymbol{x}\in \{0\}\times [0,1],\\
						& \Big\{\frac{1}{2}\boldsymbol{x}+\Big(\frac{1}{2},\frac{1}{4}\Big),\frac{1}{2}\boldsymbol{x}+\Big(\frac{1}{2},\frac{3}{4}\Big)\Big\} \qquad &&\boldsymbol{x}\in  [0,1]\times\{0\};
					\end{aligned}	
					\right.\\
					&R_4(\boldsymbol{x}):=\left\{
					\begin{aligned}
						&\frac{1}{2}\boldsymbol{x}+\Big(\frac{1}{4},\frac{3}{4}\Big),\quad  &&\boldsymbol{x}\in E_0\backslash H,\\
						&\Big\{ \frac{1}{2}\boldsymbol{x}+\Big(\frac{1}{4},\frac{3}{4}\Big), \frac{1}{2}\boldsymbol{x}+\Big(\frac{3}{4},\frac{3}{4}\Big) \Big\} \quad &&\boldsymbol{x}\in \{0\}\times [0,1/2],\\
						&\Big\{\frac{1}{2}\boldsymbol{x}+\Big(\frac{1}{4},-\frac{1}{4}\Big),\frac{1}{2}\boldsymbol{x}+\Big(\frac{3}{4},\frac{1}{4}\Big) \Big\} \quad &&\boldsymbol{x}\in \{0\}\times [1/2,1],\\
						&\Big\{\frac{1}{2}\boldsymbol{x}+\Big(\frac{1}{4},\frac{1}{4}\Big), \frac{1}{2}\boldsymbol{x}+\Big(\frac{1}{4},\frac{3}{4}\Big) \Big\} \quad &&\boldsymbol{x}\in  [0,1]\times\{0\}.
					\end{aligned}	
					\right.
			\end{align*}
			Let $K$ be the associated attractor.  Then
			$$\dim_H(K)=\frac{\log(2+\sqrt2)}{\log2}= 1.77155\ldots.$$
		\end{exam}
		\begin{proof} The proof of this example is similar to that of Example \ref{exam:osc1}; we only give an outline. 
			
			First, for any $t\in\{1,\ldots,4\}$, by the definition of $R_t$, we let $H_t^1=\{0\}\times [0,1/2]$, $H_t^2=\{0\}\times (1/2,1]$, and $H_t^3=  [0,1]\times\{0\}$. Then $H_t=\bigcup_{i=1}^3 H_t^i$. Let $r_t:=R_t|_{H_t}$. Then $r_t(H_t)=\bigcup_{l=1}^2\bigcup_{i=1}^3h_t^{l,i}(H_t^i)$.  For any $t\in\{1,\ldots,4\}$,
	let
			\begin{align*}
				J_t^1:=\big\{[0,1]\times [0,1/2]\big\}\qquad \text{and}\qquad J_t^2:=\big\{[0,1]\times [1/2,1]\big\}.
			\end{align*}
		 Hence $E_0\backslash H_t=\bigcup_{i=1}^2J_t^i$. Let $\tilde{r}_t:=R_t|_{E_0\backslash H_t}$. Then $\tilde{r}_t(E_0\backslash H_t)=\bigcup_{i=1}^2h_t^{0,i}(J_t^i)$.
			We can show that for any $t\in \{1,\ldots,4\}$, $h_t^{l,i}$ are contractions, where $l,i\in\{1,2\}$, and that for $i\in\{1,2\}$, $h_t^{0,i}$ are  contractions. Moreover, there exist  $\alpha,\beta\in\{1,2,3\}$ or $\sigma,\tau\in\{1,2\}$ such that for any $l\in \{1,2\}$ and any $i\in\{1,2,3\}$,
			$$\overline{h _t^{l,i}(H_t^i)}\subseteq \overline{H_t^\alpha}\qquad \text{or}\qquad \overline{h _t^{l,i}(H_t^i)}\subseteq \overline{J_t^\sigma},$$
			and for any $i\in\{1,2\}$,
			$$\overline{h _t^{0,i}(J_t^i)}\subseteq \overline{H_t^\beta}\qquad \text{or}\qquad \overline{h _t^{0,i}(J_t^i)}\subseteq \overline{J_t^\tau}.$$
			Hence  $\{R_t\}_{t=1}^4$ satisfies the conditions of Theorem \ref{thm:exi2}.	 	
		By Theorem \ref{thm:exi2} and Proposition \ref{prop:3.3}, we can obtain a minimal simplified GIFS $\widehat{G}=(\widehat{V},\widehat{E})$ where $\widehat{V}=\{1,\ldots,8\}$ and $\widehat{E}=\{e_1,\ldots,e_{32}\}$, along with the invariant family $\bigcup_{i=1}^8\widehat{W}_j$ and the associated similitudes $\{f_e\}_{e\in\widehat{E}}$.
			Let $\{U_i\}_{i=1}^8$ be an invariant family of nonempty bounded open sets with $\overline{U}_i=\widehat{W}_i$, $i=1,\ldots,8$.  It follows from Theorem \ref{thm:Pisot} that the system $\widehat{G}$ is of finite type.
			
			Next, let $\mathcal{T}_1,\ldots,\mathcal{T}_8$ be the neighborhood types of the root neighborhoods $[U_1],\ldots,[U_8]$, respectively. All neighborhood types are generated after two iterations.  The weighted incidence matrix happens to be the same as that in \eqref{eq:matrixtou}.
			Thus the spectral radius $\lambda_\alpha$ of $A_\alpha$ is $(2+\sqrt{2})/2^\alpha$, and by Theorem \ref{thm:main5}, we have
			$$\dim_H(K)=\alpha= 1.77155\ldots,$$
			where $\alpha$ is the unique solution of the equation $\lambda_\alpha=1.$
		\end{proof}

		\begin{exam}\label{exam:gftctri}
			Let $\mathcal{C}^2:=\mathbb{S}^1\times\R^1=\big\{(\cos\theta,\sin\theta,z):\theta\in[-\pi,\pi],z\in [0,2\pi]\big\}$ be a cylindrical surface. Let
			\begin{align*}
				E_0^1:=&\big\{(\cos\theta,\sin\theta,z):\theta\in[-\pi,0],z\in [0,\sqrt{3}\,\theta+\sqrt{3}\,\pi]\big\},\\ E_0^2:=&\big\{(\cos\theta,\sin\theta,z):\theta\in[0,\pi],z\in [0,-\sqrt{3}\,\theta+\sqrt{3}\,\pi]\big\},
			\end{align*}
			and $E_0:=E_0^1\bigcup E_0^2$.
			Let $\rho:=(\sqrt5-1)/2$, $\boldsymbol{x}:=(\cos\theta,\sin\theta,z)\in E_0$, and $\{R_t\}_{t=1}^4$ be an IRS on $E_0$ defined as
			\begin{align*}
				&R_1(\boldsymbol{x}):=\left\{
				\footnotesize{\begin{aligned}
						&(\cos (\rho\theta-\rho^2\pi),\sin (\rho\theta-\rho^2\pi),\rho z),\qquad\qquad\quad\qquad\qquad\qquad \qquad &&\boldsymbol{x}\in E_0\backslash \{(-1,0,0)\},\\
						&\{(\cos(\rho^3\pi),\sin(\rho^3\pi),0), (-1,0,0)\}\qquad &&\boldsymbol{x}=(-1,0,0);\\
				\end{aligned}	}
				\right.\\
				&R_2(\boldsymbol{x}):=\left\{
				\footnotesize{	\begin{aligned}
						&(\cos (\rho\theta+\rho^2\pi),\sin (\rho\theta+\rho^2\pi),\rho z),\qquad\qquad\qquad\qquad\qquad\qquad\quad &&\boldsymbol{x}\in E_0\backslash \{(-1,0,0)\},\\
						&\{(-1,0,0),(\cos(-\rho^3\pi),\sin(-\rho^3\pi),0)\}  \qquad&&\boldsymbol{x}=(-1,0,0);\\
				\end{aligned}	}
				\right.\\
				&R_3(\boldsymbol{x}):=\left\{
				\footnotesize{\begin{aligned}
						&(\cos (\rho^2\theta),\sin (\rho^2\theta),\rho z+ \sqrt3\rho\pi),\qquad &&\boldsymbol{x}\in E_0\backslash \{(-1,0,0)\},\\
						&\{(\cos (\rho^2\pi),\sin (\rho^2\pi),\sqrt3\rho\pi),(\cos(-\rho^3\pi),\sin(-\rho^3\pi),\sqrt3\rho\pi)\} \qquad &&\boldsymbol{x}=(-1,0,0).\\
				\end{aligned}	}
				\right.
			\end{align*}
			Let $K$ be the associated attractor (see Figure \ref{fig:gftctri}).
			Then
			$$\dim_H(K)= 1.68239\ldots.$$
		\end{exam}
		The proof of this example is similar to that of Example \ref{exam:gftc1}; we omit the proof.
		\begin{figure}[H]
			\centering
			\mbox{\subfigure[Front]
				{	\includegraphics[scale=0.24]{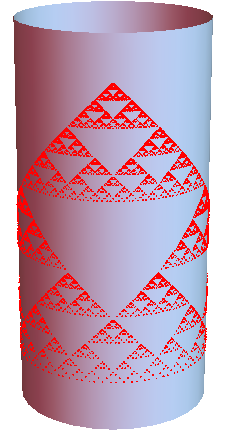}}
			}\qquad\qquad
			\mbox{\subfigure[Back]
				{	\includegraphics[scale=0.24]{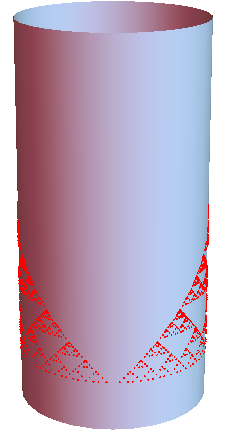}}
			}
			\caption{The attractor of $\{R_t\}_{t=1}^3$  in Example \ref{exam:gftctri}.}
			\label{fig:gftctri}
		\end{figure}

	\begin{appendix}\section{Definitions related to (GFTC)}\label{app}
		\setcounter{equation}{0}
	The following definitions can be found in  \cite{Lau-Ngai_2007,Ngai-Wang-Dong_2010,Ngai-Xu_2023}.
	For two directed paths, $\mathbf{e},\mathbf{e}'\in E^*$, we write $\mathbf{e}\preceq \mathbf{e}'$ if $\mathbf{e}$ is an initial segment of $\mathbf{e}'$ or $\mathbf{e}=\mathbf{e}'$, and write $\mathbf{e}\npreceq \mathbf{e}'$ if $\mathbf{e}$ is not an initial segment of $\mathbf{e}'$. Let $\{\mathcal{M}_k\}_{k=0}^\infty$ be a sequence of index sets such that for any $k\geq 0$, $\mathcal{M}_k$ is a finite subset of $E^*$. We say that $\mathcal{M}_k$ is an {\em antichain} if for any $\mathbf{e},\mathbf{e}'\in \mathcal{M}_k$, $\mathbf{e}\npreceq \mathbf{e}'$ and $\mathbf{e}'\npreceq \mathbf{e}$. Let
	$$\underline{m}_k=\underline{m}_k(\mathcal{M}_k):=\min\{|\mathbf{e}|:\mathbf{e}\in \mathcal{M}_k\}\quad\text{and}\quad \overline{m}_k=\overline{m}_k(\mathcal{M}_k):=\max\{|\mathbf{e}|:\mathbf{e}\in \mathcal{M}_k\}.$$
	
	\begin{defi}\label{defi:nes}
		We say that $\{\mathcal{M}_k\}_{k=0}^\infty$ is a {\em sequence of nested index sets} if it satisfies the following conditions:
		\begin{enumerate}
			\item[(1)]  both $\{\underline{m}_k\}$ and  $\{\overline{m}_k\}$ are nondecreasing, and $\lim_{k\to\infty} \overline{m}_k=\lim_{k\to\infty} \overline{m}_k=\infty$;
			\item[(2)] for each $k\geq 1$, $\mathcal{M}_k$ is an antichain in $E^*$;
			\item[(3)] for each $\mathbf{e}'\in E^*$ with $|\mathbf{e}'|>\overline{m}_k$, there exists $\mathbf{e}\in \mathcal{M}_k$ such that $\mathbf{e}\preceq \mathbf{e}'$;
			\item[(4)] for each $\mathbf{e}'\in E^*$ with $|\mathbf{e}'|<\underline{m}_k$, there exists some $\mathbf{e}\in \mathcal{M}_k$ such that $\mathbf{e}'\preceq \mathbf{e}$;
			\item[(5)] there exists a positive integer $L$, independent of $k$, such that for all $\mathbf{e}\in \mathcal{M}_k$ and $\mathbf{e}'\in \mathcal{M}_{k+1}$ with $\mathbf{e}\preceq \mathbf{e}'$, we have $|\mathbf{e}'|-|\mathbf{e}|\leq L$.
		\end{enumerate}
		(We allow $\mathcal{M}_k\bigcap \mathcal{M}_{k+1}\neq \emptyset$. Very often, $\bigcup_{k=1}^\infty\mathcal{M}_k$ is a proper subset of $E^*$.)
	\end{defi}
	
	Fix a sequence $\{\mathcal{M}_k\}_{k=0}^\infty$ of nested index sets. It is useful to partition $\mathcal{M}_k$ into $\mathcal{M}_k^{i,j}$, $1\leq i,j\leq m$, defined as
	$$\mathcal{M}_k^{i,j}:=\mathcal{M}_k\bigcap \Big(\bigcup_{p\geq 1}E_p^{i,j}\Big)=\big\{\mathbf{e}=(e_1,\ldots, e_p)\in \mathcal{M}_k:\mathbf{e}\in E_p^{i,j}\,\, \text{for some}\,\, p\geq 1\big\}.$$
	That is, $\mathcal{M}_k^{i,j}$ is the subset of $\mathcal{M}_k$ consisting of all directed paths from vertex $i$ to vertex $j$. Note that $\mathcal{M}_k=\bigcup_{i,j=1}^q\mathcal{M}_k^{i,j}$.
	
	For $i,j=1,\ldots, m$, $k\geq 1$, define
	$$\mathcal{V}_k^{i,j}:=\{(f_\mathbf{e},i,j,k),e\in \mathcal{M}_k^{i,j}\}\qquad \text{and}\qquad \mathcal{V}_k:=\bigcup_{i,j=1}^q\mathcal{V}_k^{i,j}.$$
	For $\mathbf{e}\in \mathcal{M}_k^{i,j}$, we call $(f_\mathbf{e},i,j,k)$ (or simply $(f_e,k)$) a {\em vertex}.
	
	For convenience, we let $\mathcal{M}_0=\{1,\ldots,m\}$ and $\mathcal{V}_0:=\{\mathbf{v}_{\text{root}}^1,\ldots,\mathbf{v}_{\text{root}}^m\}$, where $\mathbf{v}_{\text{root}}^i:=(I,i,i,0)$, with $I$ being the identity map on $M$. We call $\mathcal{V}_0$ the set of {\em root vertices}.
	
	The {\em vertex set } is the set of all vertices (not counting multiplicity)
	\begin{align}\label{eq:v}
	\mathcal{V}:=\bigcup_{k\geq 0}\mathcal{V}_k.
	\end{align}
	
	Let $\pi: \bigcup_{k\geq 0}\mathcal{M}_k\to \mathcal{V}$ be a map from the set of all directed paths in $\bigcup_{k\geq 0}\mathcal{M}_k$ to the vertex set of $G$ defined naturally as follows:
	\begin{align}
		\pi(\mathbf{e}):=\left\{
		\begin{aligned}
			&(f_\mathbf{e},i,j,k),\qquad  &&\text{if}\,\,e\in \mathcal{M}_k^{i,j}\,\,\text{and}\,\,k\geq 1,\\
			&\mathbf{v}_{\text{root}}^i,\qquad &&\text{if}\,\,\mathbf{e}=i\in \mathcal{M}_0.
		\end{aligned}	
		\right.
	\end{align}
	
	Define a graph $\mathcal{G}:=(\mathcal{V},\mathcal{E})$, where $\mathcal{E}$ is the set of all directed edges of $\mathcal{V}$. Given two vertices $\mathbf{v}$ and $\mathbf{v}'$ in $\mathcal{G}$, suppose there exist directed paths $\mathbf{e}\in \mathcal{M}_k$, $\mathbf{e}'\in \mathcal{M}_{k+1}$ and $\mathbf{k}\in \mathcal{E}^*$ such that $\mathbf{v}=\pi(\mathbf{e})$, $\mathbf{v}'=\pi(\mathbf{e}')$ and $\mathbf{e}=\mathbf{e}\mathbf{k}$ (the concatenation of $\mathbf{e}$ and $\mathbf{k}$). Then we connect a directed edge $\mathbf{k}:\mathbf{v}\to \mathbf{v}'$. Note that if $\mathbf{k}:\mathbf{v}_1\to \mathbf{v}'$ and $\mathbf{k}:\mathbf{v}_2\to \mathbf{v}'$, then $\mathbf{v}_1=\mathbf{v}_2$. This way we obtain the set of all directed edges $\mathcal{E}$ of $\mathcal{G}$. We call $\mathbf{v}$ a {\em parent} of $\mathbf{v}'$ and $\mathbf{v}'$ an {\em offspring} of $\mathbf{v}$.
	
	To construct the graph $\mathcal{G_R}$, called the {\em reduced graph}, we first fix an order for $\mathcal{E}^*$. Here we
	use the lexicographical order induced on the index set. This means that if $\mathbf{e}=(e_{i_1},\ldots,e_{i_p})$ and
	$\mathbf{e}'=(e_{i_1'},\ldots,e_{i_p'})$, then $\mathbf{e}<\mathbf{e}'$ if and only if $(i_1,\ldots,i_p) <(i_1',
	\ldots,i_p')$ in the lexicographical order. Start with the vertex set $\mathcal{V}$. The
	set of directed edges $\mathcal{E_R}$ of $\mathcal{G_R}$ is obtained from $\mathcal{G}$ by removing all but the smallest directed
	edge going to a vertex. More precisely, for each vertex $\mathbf{v}$ let $\mathbf{k}_1,\ldots, \mathbf{k}_q$ be all the directed
	edges going from some vertex to $\mathbf{v}$. Suppose that $\mathbf{k}_1 < \mathbf{k}_2 < \cdots < \mathbf{k}_q$ in the order described above. Then we keep only the directed edge $\mathbf{k}_1$ and remove all other edges. This way we
	obtain $\mathcal{E_R}$.
	
	To finish the construction of the reduced graph, we remove all vertices that do not have
	offspring in $\mathcal{G_R}$, together with all the vertices and edges leading only to them (see \cite[appendix A]{Lau-Ngai_2007} for an example of such a vertex). We denote the resulting graph by the same
	symbol $\mathcal{G_R}$ and write $\mathcal{G_R} = (\mathcal{V_R}, \mathcal{E_R})$, where $\mathcal{V_R}$ is the set of all vertices and $\mathcal{E_R}$ is the set of all
	edges.
	
	Let $\widehat{\mathcal{G}}:=(\widehat{\mathcal{V}},\widehat{\mathcal{E}})$ be a minimal simplified GIFS associated to $\mathcal{G}$. We can use a similar method to construct a corresponding reduced GIFS $\widehat{\mathcal{G}}_{\mathcal{R}}:=(\widehat{\mathcal{V}}_{\mathcal{R}},\widehat{\mathcal{E}}_{\mathcal{R}})$.
	
	Fix an invariant family $\mathbf{U}:=\{U_i\}_{i=1}^m$ for $G=(V,E)$. Let $\mathbf{v}:=(f_\mathbf{e},i,j,k)\in \mathcal{V}_k$ with
	$\mathbf{e}\in \mathcal{M}^{i,j}_r$ and $\mathbf{v}'=(f_{\mathbf{e}'},i',j',k)\in \mathcal{V}_k$ with
	$\mathbf{e}'\in \mathcal{M}^{i',j'}_s$. We say that two vertices $\mathbf{v}$ and $\mathbf{v}'$ are {\em neighbours} (with respect to $\mathbf{U}$) if
	\begin{align*}
		i=i'\qquad\text{and}\qquad f_\mathbf{e}(U_j)\bigcap f_{\mathbf{e}'}(U_{j'})\neq \emptyset.
	\end{align*}
	We call the set
	\begin{align*}
		\mathcal{N}(\mathbf{v}):=\{\mathbf{v}':\mathbf{v}'\,\,\text{is a neighbour of}\,\, \mathbf{v}\}
	\end{align*}
	the {\em neighbourhood} of $\mathbf{v}$ (with respect to $\mathbf{U}$).
	
	We now define an equivalence relation on the set of vertices $\mathcal{V}$. Two vertices $\mathbf{v}\in \mathcal{V}_k$
	and $\mathbf{v}'\in \mathcal{V}_{k'}$ are said {\em equivalent}, denoted $\mathbf{v}\sim\mathbf{v}'$ (or more precisely, $\mathbf{v}\sim_\tau\mathbf{v}'$ ), if
	$\#\mathcal{N}(\mathbf{v})=\#\mathcal{N}(\mathbf{v}')$ and $\tau=f_{\mathbf{v}'}\circ f_{\mathbf{v}}^{-1}:M\to M$ induces a bijection $g_\tau:\mathcal{N}(\mathbf{v})\to \mathcal{N}(\mathbf{v}')$ defined by
	\begin{align}\label{eq:equi}
		g_\tau(f_\mathbf{u},i,j,k)=(\tau\circ f_\mathbf{u},i',j',k')
	\end{align}
	so that the following condition are satisfied:
	\begin{enumerate}
		\item[(a)] In \eqref{eq:equi}, $j=j'$.
		\item[(b)] For $\mathbf{u}\in \mathcal{N}(\mathbf{v})$ and $\mathbf{u}'\in\mathcal{N}(\mathbf{v}')$ such that $g_\tau(\mathbf{u})=\mathbf{u}'$, and for any positive integer $l\geq 1$, a directed path $\mathbf{e}\in E^*$ satisfies $(f_\mathbf{u}\circ f_\mathbf{e},k+l)\in \mathcal{V}_{k+l}$ if and only if it satisfies $(f_{\mathbf{u}'}\circ f_\mathbf{e},k'+l)\in \mathcal{V}_{k'+l}$.
	\end{enumerate}
	
	It is easy to check that $\sim$ is an equivalence relation. Denote the equivalence class of
	$\mathbf{v}$ by $[\mathbf{v}]$, and call it the \textit{neighborhood types} of $\mathbf{v}$ (with respect to $\mathbf{U}$).
	\end{appendix}
	
	\section*{Acknowledgement}
	
	Part of this work was carried out while the third author was visiting Beijing Institute of Mathematical Sciences and Applications (BIMSA). She thanks the institute for its hospitality and support.


\begin{thebibliography}{XXX}
			
			\bibitem{Balogh-Rohner_2007} Z. M. Balogh and H. Rohner, Self-similar sets in doubling spaces, {\em Illinois J. Math.} {\bf 51} (2007), 1275--1297.
			
			\bibitem{Balogh-Tyson_2005} Z. M. Balogh and J. T. Tyson,  Hausdorff dimensions of self-similar and self-affine fractals in the Heisenberg group, {\em Proc. London Math. Soc.} {\bf 91}	(2005), 153--183.
			
			
			\bibitem{Barnsley-Vince_2012}  M. F. Barnsley and A. Vince, Real projective iterated function systems, {\em J. Geom. Anal.} {\bf 22} 	(2012), 1137--1172.
		
			
			\bibitem{Bishop-Crittenden_1964}R. L. Bishop and R. J. Crittenden, {\em Geometry of Manifolds}, Academic Press, New York, 1964.
			
	
			\bibitem{Das-Ngai_2004} M. Das and S.-M. Ngai, Graph-directed iterated function systems with overlaps, {\em Indiana Univ. Math. J.} {\bf 53} (2004), 109--134.
			
			\bibitem{Edgar_1990} G. A. Edgar, {\em Measure, Topology, and Fractal Geometry}, Springer, New York, 2008.
			
			\bibitem{Falconer_2003}	K. J. Falconer, {\em Fractal Geometry: Mathematical Foundations and Applications}, John Wiley, 2nd
			Ed., 2014.
			
			\bibitem{Hossain-Akhtar-Navascues_2024}A. Hossain, M. N. Akhtar, and M. A. Navascu\'es, Fractal interpolation on the real projective plane, {\em Numer. Algorithms} {\bf 96} (2024), 557--582.
			
			\bibitem{Hutchinson_1981} J. E. Hutchinson, Fractals and self-similarity, {\em Indiana Univ. Math. J.} {\bf 30} (1981), 713--747.
			
			\bibitem{Jin-Yau_2005} N. Jin and S. S. T. Yau,  General finite type IFS and M-matrix, {\em Commun. Anal. Geom.} {\bf 13} (2005), 821--843.
			
			\bibitem{Lau-Ngai_2007}  K.-S. Lau and S.-M. Ngai,  A generalized finite type condition for iterated function systems, {\em Adv. Math.} {\bf 208} (2007), 647--671.
			
			\bibitem{Mauldin-Williams_1988} R. D. Mauldin and S. C. Williams, Hausdorff dimension in graph directed constructions, {\em Trans. Am. Math. Soc.} {\bf 309} (1988), 811--829.
			
			\bibitem{Ngai-Wang_2001} S.-M. Ngai and Y. Wang, Hausdorff dimension of self-similar sets with overlaps, {\em J. Lond. Math. Soc.} {\bf 2} (2001), 655--672.
			
			\bibitem{Ngai-Wang-Dong_2010} S.-M. Ngai,  F. Wang, and X. H. Dong, Graph-directed iterated function systems satisfying the generalized finite type condition, {\em Nonlinearity} {\bf 23} (2010), 2333--2350.
			
			\bibitem{Ngai-Xu_2021} S.-M. Ngai and Y. Y. Xu, Existence of $L^q$-dimension and entropy dimension of self-conformal measures on Riemannian manifolds, {\em Nonlinear Anal.} {\bf 230} (2023), Paper No. 113226.
			
			\bibitem{Ngai-Xu_2023} S.-M. Ngai and Y. Y. Xu, Separation conditions for iterated function systems with overlaps on Riemannian manifolds, {\em J. Geom. Anal.} {\bf 33} (2023), no. 8, Paper No. 262.
			
			\bibitem{Olsen_1994} L. Olsen, {\em Random Geometrically Graph Directed Self-Similar Multifractals},
			Pitman Res. Notes Math. Ser., vol. 307, New York, 1994.
			
			\bibitem{Strichartz_1992} R. S. Strichartz, Self-similarity on nilpotent Lie groups, {\em Contemp. Math.},
			{\bf 140} (1992), 123--157.
			
			\bibitem{Wu-Yamaguchi_2017} D. Wu and T. Yamaguchi, Hausdorff dimension of asymptotic self-similar sets, {\em J. Geom. Anal.} {\bf 4} (2017),  339--368.
			
			
		\end{thebibliography}
	\end{document}